\newtheorem{Proposition}{Proposition}[section]
\newtheorem{proposition}[Proposition]{Proposition}
\newtheorem{lemma}[Proposition]{Lemma}
\newtheorem{theorem}[Proposition]{Theorem}
\newtheorem{claim}[Proposition]{Claim}
\newtheorem{corollary}[Proposition]{Corollary}
\theoremstyle{definition}
\newtheorem{definition}[Proposition]{Definition}
\newtheorem{question}[Proposition]{Question}
\newtheorem{remark}[Proposition]{Remark}
\crefname{chapter}{Chapter}{Chapters}
\crefname{lemma}{Lemma}{Lemmas}
\crefname{theorem}{Theorem}{Theorems}
\crefname{definition}{Definition}{Definitions}
\crefname{proposition}{Proposition}{Propositions}
\crefname{notation}{Notation}{}
\crefname{corollary}{Corollary}{Corollaries}
\crefname{section}{Section}{Sections}
\crefname{remark}{Proposition}{Remark}
\newcommand{\sset}{\subseteq}
\newcommand{\rset}{\supseteq}
\newcommand{\mc}{\mathcal}
\renewcommand{\phi}{\varphi}
\newcommand{\resim}{\mathord{\sim}}
\newcommand{\upset}[1]{\mathord{\uparrow \! #1}}
\newcommand{\dnset}[1]{\mathord{\downarrow \! #1}}
\newcommand{\alphad}[1]{\phi_{#1}}
\title{Constructive Quantum Logics}
\author{Juan P. Aguilera\\
TU Wien \and
Guillaume Massas\\
SNS Pisa}
\date{\today}
\begin{document}
\maketitle
\begin{abstract}
Following a suggestion of Birkhoff and Von Neumann [\textit{Ann. Math.} 37 (1936), 23-32], we pursue a joint study of quantum logic and intuitionistic logic.
We exhibit a linear-time translation which for each quantum logic $Q$ and each superintuitionistic logic $I$ yields an axiomatization of $Q\cap I$ from axiomatizations of $Q$ and $I$. The translation is centered around a certain axiom \textup{(Ex)} which (together with introduction and elimination rules for connectives) is shown to 
axiomatize the intersection of orthologic and intuitionistic logic, solving a problem of Holliday [\textit{Logics} 1 (2023), pp. 36-79]. 
We prove that the lattice of all super-Ex logics is isomorphic to the product of the lattices of quantum logics and superintuitionistic logics.
We prove that there are infinitely many sub-Ex logics extending Holliday's fundamental logic.
\end{abstract}

\setcounter{tocdepth}{1}
\tableofcontents

\section{Introduction}
The motivating question for this work is: \textit{What are the jointly valid logical principles of quantum mechanics and constructivism?} We shall address this question and develop a theory of constructive quantum logic.
The relevance of constructivism for computer science is well documented. For instance, it is central in the theory of programming languages (see e.g., Curry \cite{Cu34} or Wadler \cite{Wa14}); it is the underlying principle behind the calculus of constructions (see Coquant and Huet \cite{CH88}), on which many proof assistants such as Coq are based; and occurs in the context of applications to artificial intelligence and other domains.
Its importance is also well documented in the domains of philosophy (e.g., in the works of Brouwer, Hilbert, Bernays, Shanin, Markov, Kolmogorov, Bishop, Feferman, and others; see e.g., Bridges and Richman \cite{BR87}) and mathematics (e.g., in the context of applied proof theory; see Kohlenbach \cite{Ko08}).
The archetypal constructivist logic is intuitionistic logic, in which the principle of excluded middle $p \vee \lnot p$ is dropped, although a variety of alternatives exist, such as \textit{De Morgan logic}, in which one accepts the principle of \textit{weak excluded middle} $\lnot p \vee \lnot\lnot p$; see Gabbay and Olivetti \cite{GO00} for background.
The situation with quantum mechanics and its logic is slightly more involved.

The study of quantum logic began
almost 90 years ago with the work of Birkhoff and Von Neumann \cite{BvN36}, motivated by the observation that the collection of observables in quantum mechanics does not obey classical logic. For instance, the principle of distributivity 
\begin{equation}\label{eqDistributivity}
a \wedge (b \vee c) \vdash (a \wedge b) \vee (a \wedge c)
\end{equation}
is not valid: if $a$ is a proposition relating to the momentum of a particle and $b$ and $c$ are propositions concerning the location of a particle, it is easy to see how failures of \eqref{eqDistributivity} are consequences of Heisenberg's uncertainty principle. Similarly, the following pseudo-complement law:
\begin{equation}\label{eqPseudocomp}
    a \land b \vdash \bot \Rightarrow a \vdash \neg b
\end{equation}
is not valid in quantum mechanics.

Nowadays, the term ``quantum logic'' is used to mean slightly different things by different authors. A natural notion of ``quantum logic'' is the first-order theory of closed subspaces of complex Hilbert spaces in the signature $(\vee, \wedge, \lnot, 0, 1)$, where $\lnot$ is the orthogonality relation. It was recently shown by Fritz \cite{Fr20} that this theory is undecidable in the general case. An interesting aspect of Fritz's proof is its use of Slofstra's \cite{Sl20} solution to Tsirelson's problem.
This undecidability result perhaps explains \textit{a posteriori} why quantum logic in this strict sense has not had much application in physics or elsewhere. Nonetheless, there have been proposals for the use of quantum logic to model quantum computations (see Rawling and Selesnick \cite{RS00} or Ying \cite{Yi05}) and it is not unreasonable to expect future applications of this and other quantum logics in this direction as quantum technology gains more relevance. One expects the relevance of constructivism for computer science to translate into applications of constructive quantum logics to quantum computation.

In contrast to Fritz \cite{Fr20}, Dunn et al.~\cite{Detal05} have pointed out that Tarski's quantifier-elimination theorem implies that the quantum logics associated to fixed finite-dimensional Hilbert spaces are decidable. Herrman and Ziegler \cite{HZ16} have determined the computational complexity of quantum satisfiability in each fixed finite dimension. See also Herrman \cite{He22} and Herrman and Ziegler \cite{HZ18} for related decidability results.

Typically quantum logics lack a deduction theorem, and indeed any reasonable notion of material implication. Nonetheless, restricting to conjunction, disjunction, and negation results in  reasonable logics. Moreover, the expressive power of the logics can be enhanced by considering non-trivial expressions of entailment ``$\phi\vdash\psi$'' instead of only one-sided formulas ``$\vdash\psi$.''
Takeuti \cite{Ta81} has explored how quantum logic can be applied to develop an alternate foundation of mathematics. He has famously referred to classical logic as ``the logic of God,'' to intuitionistic logic as ``the logic of people,'' and to quantum logic as ``the logic of things.'' If one subscribes to this view, the motivating question in the first paragraph can be rephrased as \textit{What is the common logic of people and the physical world?}

One of the most common usages of the term ``quantum logic'' is to refer to what some call \textit{minimal quantum logic} or \textit{orthologic} (see e.g., Dishkant \cite{Di72}). This is the logic of ortholattices, of which the algebras of closed subspaces of complex Hilbert spaces form a subcollection.
Orthologic still has the property that the addition of distributivity \eqref{eqDistributivity} results in classical logic, but is much more tractable, and indeed the propositional validity problem is decidable in polynomial time (see Goldblatt \cite{Go74} or Nishimura \cite{Ni94}). In fact, this decidability result extends to first-order orthologic in a certain sense (see \cite{AB24}). The decidability of orthologic has led to applications in a variety of contexts such as automated theorem proving (see Guilloud, Gambir, and Kun\u{c}ak \cite{GGK23}). This work in turn has motivated the study of orthologic with axioms (see Guilloud and Kun\u{c}ak \cite{GK24}). It is interesting that both intuitionistic logic and orthologic have found applications in the context of automated theorem proving, and thus one expects that constructive quantum logic will do so too.

A notable strengthening of orthologic which is also sometimes called ``quantum logic'' is \textit{orthomodular logic}, also considered by Birkhoff and Von Neumann \cite{BvN36}. This is the logic of orthomodular lattices: ortholattices satisfying the \textit{orthomodular law},  the following weakening of distributivity:
\[\text{if $a\leq b$, then $a\vee (\lnot a \wedge b) = b$}.\]

Noting the clear parallel with the situation of intuitionism,
Birkhoff and Von Neumann \cite[\S 17]{BvN36} suggested comparing quantum logic with intuitionistic logic. 
This comparison has been pursued in a variety of contexts. Battilotti and Sambin \cite{BS99} have studied what they call \textit{basic logic}, a natural logic validating the joint principles of orthologic, intuitionistic logic, and Girard's \textit{linear logic}. One of the motivating questions asked in \cite{BS99} was to identify $X$ in Figure \ref{FigureFindX}. 

\begin{figure}[h]
\begin{center}
\begin{tikzcd}
                                       & \text{Classical logic}  &                              \\
\text{Intuitionistic logic} \arrow[ru] &                         & \text{Orthologic} \arrow[lu] \\
                                       & X \arrow[lu] \arrow[ru] &                             
\end{tikzcd}
\end{center}
\caption{Orthologic, intuitionistic logic, and their common principles.}\label{FigureFindX}
\end{figure}
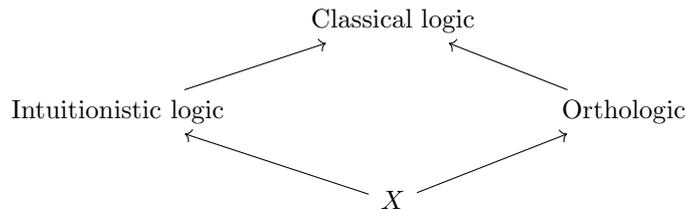

Indeed, Battilotti and Sambin \cite{BS99} give a possible (proof-theoretic) answer to this question which essentially consists of adding to their basic logic principles which identify the two types of conjunctions, disjunctions, and top and bottom elements of linear logics. This produces a natural logic which they call BS and which can be naturally extended to intuitionistic logic and orthologic by added their characteristic structural rules. Aoyama \cite{Ao04}  gave a similar proof-theoretic answer to this question via a system he called DO (dual orthologic). Selesnick \cite{Se03} studied a similar logic called \text{intuitionistic orthologic}.	

Holliday \cite{Ho23} considered Figure \ref{FigureFindX} from a semantic perspective
and the logics in it and developed a theory of what is now called \textit{fundamental logic}. His study was motivated by earlier work of his and Mandelkern \cite{HM24} on the logical principles underpinning natural language in the presence of epistemic modalities. Fundamental logic is a very natural logic and deserving of its name, in the authors' opinion. Like intuitionistic logic, it lacks the validity of double negation elimination; like orthologic, it lacks the validity of distributivity \eqref{eqDistributivity}. Moreover, fundamental logic becomes intuitionistic logic when augmented with distributivity and the pseudo-complement law, and it becomes orthologic when augmented with double negation elimination. We shall recall the precise definition in \S\ref{SectFundamental} below; essentially, fundamental logic is the weakest logic possessing introduction and elimination rules for $\wedge, \vee, \lnot$. Hermens \cite{He13} and Weingartner \cite{We10} have considered other notions of ``intuitionistic quantum logic.''\\

All these logics, however, while very natural, do not answer the motivating question, as they do not describe precisely the intersection of orthologic and intuitionistic logic. This fact perhaps explains why different natural combinations of orthologic and intuitionistic logic have been considered in the literature.
This differs from the situation of intuitionistic logic and linear logic: ``intuitionistic linear logic'' seems to be a standard notion (see e.g., Bierman \cite{Bi94}). From both the perspective of applications and that of ideology, this is a disadvantage, as in any context in which constructive quantum reasoning is required or expected, it is certainly desirable to have at one's disposal the strongest logic which is valid from both the constructive and quantum standpoints; thus we would like to know what this logic is.

In this article, we isolate the following remarkable axiom \textup{(Ex)}:
\begin{align*}
    \lnot \bigg[a \wedge \Big((b\wedge c) \vee (b &\wedge d)\Big)\bigg] \wedge a \wedge (c \vee e) \wedge \lnot\lnot f \vdash 
\lnot\lnot(a \wedge f) \\
    & \wedge \bigg[(a\wedge c) \vee (a \wedge e) \vee f\bigg] \wedge 
    \bigg[\Big( b \wedge (c \vee d)\Big) \vee \lnot \Big( b \wedge (c \vee d) \Big)\bigg].
\end{align*}
We prove that \textup{(Ex)} is valid both from the perspectives of intuitionistic logic and orthologic. We call \textit{Ex-logic} the result of adding axiom \textup{(Ex)} to fundamental logic. We prove:
\begin{theorem}\label{TheoremIntroEx}
The validities of Ex-logic are precisely the joint validities of orthologic and intuitionistic logic (in the signature $\{\vee,\wedge,\lnot\}$).
\end{theorem}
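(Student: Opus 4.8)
The plan is to establish the two inclusions $\mathsf{Ex}\subseteq\mathsf O\cap\mathsf I$ and $\mathsf O\cap\mathsf I\subseteq\mathsf{Ex}$ separately, where $\mathsf O$ denotes orthologic, $\mathsf I$ denotes intuitionistic logic in the signature $\{\vee,\wedge,\lnot\}$, and $\mathsf{Ex}$ is Ex-logic; a ``validity'' is a valid sequent $\phi\vdash\psi$. As recalled above, $\mathsf O=\mathsf F+(\mathrm{DNE})$, where $\mathsf F$ is fundamental logic and $(\mathrm{DNE})$ is the sequent $\lnot\lnot a\vdash a$, while $\mathsf I$ is $\mathsf F$ augmented with distributivity \eqref{eqDistributivity} and the pseudocomplement law \eqref{eqPseudocomp}. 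Since $\mathsf F\subseteq\mathsf O$ and $\mathsf F\subseteq\mathsf I$, the inclusion $\mathsf{Ex}\subseteq\mathsf O\cap\mathsf I$ reduces to checking that $(\mathrm{Ex})$ is a theorem of $\mathsf O$ and of $\mathsf I$. In $\mathsf O$: $(\mathrm{DNE})$ lets the antecedent of $(\mathrm{Ex})$ entail $f$ (from its conjunct $\lnot\lnot f$) as well as $a$, so the first consequent-conjunct follows from $a\wedge f\vdash\lnot\lnot(a\wedge f)$, the disjunction $(a\wedge c)\vee(a\wedge e)\vee f$ follows from $f$ alone, and $(b\wedge(c\vee d))\vee\lnot(b\wedge(c\vee d))$ is the orthologic theorem $\vdash x\vee\lnot x$. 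In $\mathsf I$: distributivity identifies $(b\wedge c)\vee(b\wedge d)$ with $b\wedge(c\vee d)$ and $a\wedge(c\vee e)$ with $(a\wedge c)\vee(a\wedge e)$, so the middle consequent-conjunct is immediate; the first follows from $a\vdash\lnot\lnot a$ and the intuitionistic equivalence $\lnot\lnot(a\wedge f)\dashv\vdash\lnot\lnot a\wedge\lnot\lnot f$; and for the last one uses that the antecedent entails $a$ and, after distributing, $\lnot\bigl(a\wedge b\wedge(c\vee d)\bigr)$, whence $\lnot\bigl(b\wedge(c\vee d)\bigr)$ by intuitionistic negation reasoning. (Equivalently, one verifies directly that the inequality $(\mathrm{Ex})$ holds in every ortholattice and in every pseudocomplemented distributive lattice.)

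For the reverse inclusion I would argue semantically. By Holliday's algebraic completeness theorem for fundamental logic \cite{Ho23} together with a Lindenbaum--Tarski construction, $\mathsf{Ex}$ is sound and complete with respect to the variety $\mathsf{ExAlg}$ of \emph{Ex-algebras}, i.e., the algebras of fundamental logic that additionally satisfy the inequality $(\mathrm{Ex})$. A sequent is an inequality between terms, so it fails in some Ex-algebra if and only if it fails in some subdirectly irreducible Ex-algebra. The heart of the proof is then the following structural lemma: \emph{every subdirectly irreducible Ex-algebra is either an ortholattice or a pseudocomplemented distributive lattice}; equivalently, every such algebra is a model of all of $\mathsf O$ or a model of all of $\mathsf I$ (here we use the standard fact, recalled in \S\ref{SectFundamental}, that pseudocomplemented distributive lattices are exactly the algebraic semantics of $\mathsf I$). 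Granting the lemma: if $\phi\vdash\psi$ is valid in both $\mathsf O$ and $\mathsf I$, then it holds in every ortholattice and every pseudocomplemented distributive lattice, hence in every subdirectly irreducible Ex-algebra, hence in every Ex-algebra, hence $\phi\vdash_{\mathsf{Ex}}\psi$; with the first inclusion this gives $\mathsf{Ex}=\mathsf O\cap\mathsf I$. The same lemma is what makes the lattice of super-Ex logics split as the product of the lattices of quantum and superintuitionistic logics, since it places every subdirectly irreducible Ex-algebra on exactly one of the two sides, which meet only in the two-element Boolean algebra.

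The structural lemma is the main obstacle, and it is exactly what the unusual shape of $(\mathrm{Ex})$ is engineered to deliver. Let $A$ be a subdirectly irreducible Ex-algebra that is not an ortholattice, so $p<\lnot\lnot p$ for some $p\in A$; one must derive distributivity \eqref{eqDistributivity} and the pseudocomplement law \eqref{eqPseudocomp} in $A$. The strategy is to instantiate the six free variables $a,b,c,d,e,f$ of $(\mathrm{Ex})$ so that the negated ``guard'' $\lnot\bigl[a\wedge((b\wedge c)\vee(b\wedge d))\bigr]$, the conjunct $a\wedge(c\vee e)$, and the conjunct $\lnot\lnot f$ of the antecedent are simultaneously true in $A$ — this is where the witness $p$ and the fact that $A$ has a monolith enter — while the three conjuncts of the consequent specialize, in that context, to distributivity, the pseudocomplement law, and an instance of excluded middle respectively; in an algebra failing $(\mathrm{DNE})$ one cannot keep the antecedent true and make the consequent fail, whereas in an ortholattice $(\mathrm{DNE})$ makes the consequent follow from the antecedent for trivial reasons. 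The genuine difficulty is combinatorial rather than conceptual: the three ``tasks'' share variables, so the instantiation isolating distributivity must be made compatible with the one isolating the pseudocomplement law while never violating the guard, and this bookkeeping is the technical core. (Alternatively, one can run the entire argument on Holliday's relational semantics for fundamental logic, where $(\mathrm{Ex})$ corresponds to a frame condition forcing every rooted fundamental frame to be either symmetric — an orthoframe — or a partial order — an intuitionistic frame; the two routes are interchangeable.)
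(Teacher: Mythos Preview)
Your first inclusion is fine and matches the paper. The problem is the reverse inclusion: your ``structural lemma'' is the right target, but the proof sketch you give for it does not work, and the paper proceeds quite differently.

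First, the sketch itself is confused. You propose to fix a subdirectly irreducible Ex-algebra $A$ in which $p<\lnot\lnot p$ for some $p$, and then ``instantiate the six free variables of $(\mathrm{Ex})$'' so that the three consequent-conjuncts ``specialize to distributivity, the pseudocomplement law, and an instance of excluded middle.'' But distributivity and pseudocomplementation are \emph{universal} laws; a single instantiation of $(\mathrm{Ex})$ cannot yield them. You would need, for arbitrary $x,y,z\in A$, to cook up values of $a,b,c,d,e,f$ (presumably involving $x,y,z$ and your witness $p$) so that the antecedent of $(\mathrm{Ex})$ sits above $x\wedge(y\vee z)$ while the relevant consequent-conjunct sits below $(x\wedge y)\vee(x\wedge z)$; you do not indicate how, and there is no evident way to do it. You also never say how the monolith is used; but subdirect irreducibility is essential, since an Ex-algebra that is neither an ortholattice nor a Heyting lattice certainly exists (take the product of a non-Boolean ortholattice with a non-Boolean Heyting lattice), so any argument that does not invoke SI explicitly is doomed. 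Finally, the parenthetical remark about ``excluded middle'' is odd: you are trying to show $A$ is a Heyting lattice, and excluded middle is exactly what you \emph{don't} expect to hold there.

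What the paper actually does is an explicit embedding theorem rather than a dichotomy on SI algebras. From $(\mathrm{Ex})$ one extracts three consequences: $(\mathrm{Nu})$ $\lnot\lnot p\wedge\lnot\lnot q\leq\lnot\lnot(p\wedge q)$; $(\mathrm{Vi})$ $a\wedge(c\vee e)\wedge\lnot\lnot f\leq(a\wedge c)\vee(a\wedge e)\vee f$; and $(\mathrm{Cl})$ $\lnot\big(a\wedge((b\wedge c)\vee(b\wedge d))\big)\wedge a\leq(b\wedge(c\vee d))\vee\lnot(b\wedge(c\vee d))$. Using $(\mathrm{Nu})$, the relation $a\sim b\iff\lnot a=\lnot b$ is a congruence, and $O_L:=L/{\sim}$ is an ortholattice. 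Using $(\mathrm{Vi})$ one proves a prime-filter existence lemma tailored to filters/ideals that are separated by an element with $a\in I$, $\lnot\lnot a\in F$; using $(\mathrm{Cl})$ one shows that the map $a\mapsto\widehat a=\{P\text{ prime}:a\in P\}$ respects $\lnot$. This yields a surjective fundamental homomorphism $L\to A_L$ onto a Heyting lattice $A_L$. The product map $e\colon L\to O_L\times A_L$ is then an embedding: if $\lnot a\neq\lnot b$ the $O_L$-coordinates differ, and if $\lnot a=\lnot b$ with $a\nleq b$ the prime-filter lemma gives $\widehat a\nsubseteq\widehat b$. The completeness direction then follows exactly as you outlined---but from the embedding, not from a classification of SI algebras. (Your lemma is in fact a corollary of this embedding: both maps $L\to O_L$ and $L\to A_L$ are surjective, so $L$ is a subdirect product of $O_L$ and $A_L$; if $L$ is SI one projection is injective.) The relational-semantics alternative you mention at the end is not pursued in the paper and would require an independent argument.
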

Theorem \ref{TheoremIntroEx} solves a problem of Holliday \cite{Ho23} and shows that \textup{(Ex)} precisely captures the joint truths of intuitionistic logic and orthologic (as the other axioms of fundamental logic merely express the meaning of the logical connectives).

It is somewhat remarkable that the common principles of orthologic and intuitionistic logic can be axiomatized, let alone so by a single axiom, in particular given that they include neither excluded middle nor distributivity and that the language of orthologic does not include implication; thus, there is no obvious way to combine multiple axioms into one. The axiom \textup{(Ex)} involves six variables. We conjecture that the joint validities of orthologic and the implication-free fragment of intuitionistic logic cannot be axiomatized by a single axiom with fewer than six variables, and that they cannot be axiomatized by any collection of axioms with fewer than four variables.

Our method of proof for Theorem \ref{TheoremIntroEx} is algebraic and quite general. Indeed, with some extra work, it adapts to yield 
Theorem \ref{TheoremMain} below, the main result of this article. 
It appears that this result gives a definite answer to the motivating question in the first paragraph, regardless of the precise logics that embody the philosophy of constructivism, as well as the precise logic of quantum reasoning
In the statement of Theorem \ref{TheoremMain}, ``axioms'' are expressions of the form $\phi_0 \vdash\phi_1$ as above.

\begin{theorem}\label{TheoremMain}
Let $\mathcal{L}_O$ be a logic extending orthologic by axioms $\{\phi_i\vdash\psi_i\}_i$ and $\mathcal{L}_I$ be a logic extending intuitionistic logic by axioms $\{\chi_j\vdash\theta_j\}_j$, all in the signature $\{\wedge, \vee,\lnot\}$. 
Then, the joint validities of $\mathcal{L}_O$ and $\mathcal{L}_I$ are axiomatized by adding to fundamental logic the following axioms:
\begin{enumerate}
\item  $\phi_i\vdash\lnot\lnot\psi_i$ for each $i$;
\item $\chi_j\vdash\theta_j\vee\lnot\theta_j$ for each $j$;
\item \textup{(Ex)}.
\end{enumerate}
\end{theorem}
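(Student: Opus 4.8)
Here is the plan. The argument is algebraic and takes Theorem~\ref{TheoremIntroEx} as a black box. By algebraizability, $\mathcal{L}_O$, the $\{\wedge,\vee,\lnot\}$-fragment of $\mathcal{L}_I$, fundamental logic, and $\mathcal{L}^{*}:=$ (fundamental logic $+$ (1)--(3)) are the logics of, respectively, the variety $\mathsf{V}_O$ of ortholattices satisfying all $\phi_i\leq\psi_i$, the variety $\mathsf{V}_I$ of intuitionistic $\{\wedge,\vee,\lnot\}$-algebras satisfying all $\chi_j\leq\theta_j$, the variety of fundamental algebras, and its subvariety $\mathsf{K}^{*}$ cut out by (1)--(3); under this correspondence $\alpha\vdash\beta$ becomes $\alpha\leq\beta$, and the joint validities of $\mathcal{L}_O$ and $\mathcal{L}_I$ are exactly the inequations valid in $\mathsf{V}_O\cup\mathsf{V}_I$, hence in $\mathbb{V}(\mathsf{V}_O\cup\mathsf{V}_I)$. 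So the theorem is equivalent to the variety identity
\[\mathsf{K}^{*}=\mathbb{V}(\mathsf{V}_O\cup\mathsf{V}_I),\]
that is, to the two inclusions $\mathsf{V}_O\cup\mathsf{V}_I\subseteq\mathsf{K}^{*}$ (soundness) and $\mathsf{K}^{*}\subseteq\mathbb{V}(\mathsf{V}_O\cup\mathsf{V}_I)$ (completeness). All the varieties in play are congruence-distributive, being term-expansions of bounded lattices.

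The one non-routine tool is a Glivenko-style transfer. Every nontrivial ortholattice has $\{0,1\}$ as a subalgebra, so every nontrivial variety of ortholattices contains all Boolean algebras; hence each axiom $\phi_i\leq\psi_i$ of $\mathcal{L}_O$ holds in all Boolean algebras, i.e.\ is classically valid (and likewise each $\chi_j\leq\theta_j$ is classically valid, classical logic being the greatest consistent superintuitionistic logic). By Glivenko's theorem in entailment form — a classically valid entailment $\phi\vdash\psi$ becomes intuitionistically valid once weakened to $\phi\vdash\lnot\lnot\psi$ (concretely: apply the surjective homomorphism $\lnot\lnot$ onto the Boolean algebra of regular elements) — it follows that $\phi_i\vdash\lnot\lnot\psi_i$ and $\chi_j\vdash\lnot\lnot\theta_j$ are intuitionistically valid, hence hold in every member of $\mathsf{V}_I$. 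Soundness is now immediate: in an ortholattice $\lnot\lnot$ is the identity, so (1) becomes $\phi_i\leq\psi_i$, (2) becomes $\chi_j\leq 1$, and (3) is valid in orthologic, whence $\mathsf{V}_O\subseteq\mathsf{K}^{*}$; in an intuitionistic $\{\wedge,\vee,\lnot\}$-algebra, (2) follows from $\chi_j\leq\theta_j$, (3) is intuitionistically valid, and (1) holds by the transfer just proved, whence $\mathsf{V}_I\subseteq\mathsf{K}^{*}$, so $\mathbb{V}(\mathsf{V}_O\cup\mathsf{V}_I)\subseteq\mathsf{K}^{*}$.

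For completeness it suffices, as $\mathsf{K}^{*}$ is generated by its subdirectly irreducible members, to place each subdirectly irreducible $A\in\mathsf{K}^{*}$ in $\mathsf{V}_O\cup\mathsf{V}_I$. Now $\mathsf{K}^{*}$ is contained in the variety of Ex-algebras, which by Theorem~\ref{TheoremIntroEx} is the variety generated by all ortholattices together with all intuitionistic $\{\wedge,\vee,\lnot\}$-algebras; by congruence-distributivity, J\'onsson's Lemma puts $A$ in $\mathbb{HSP}_U$ of this union, and since the union is closed under $\mathbb{S}$ and $\mathbb{H}$ while an ultraproduct of members of the union lies in one of the two varieties, $A$ is itself either an ortholattice or an intuitionistic $\{\wedge,\vee,\lnot\}$-algebra. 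If $A$ is an ortholattice, then (1) together with $\lnot\lnot=\mathrm{id}$ gives $A\models\phi_i\leq\psi_i$, so $A\in\mathsf{V}_O$. If $A$ is intuitionistic, it is distributive and satisfies $\chi_j\leq\theta_j\vee\lnot\theta_j$ by (2) and $\chi_j\leq\lnot\lnot\theta_j$ by the transfer; the latter forces $\chi_j\wedge\lnot\theta_j=0$, so distributivity gives $\chi_j=\chi_j\wedge(\theta_j\vee\lnot\theta_j)=(\chi_j\wedge\theta_j)\vee(\chi_j\wedge\lnot\theta_j)=\chi_j\wedge\theta_j\leq\theta_j$, and $A\in\mathsf{V}_I$. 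This yields $\mathsf{K}^{*}\subseteq\mathbb{V}(\mathsf{V}_O\cup\mathsf{V}_I)$, and the theorem follows.

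The substantial mathematics is concentrated in Theorem~\ref{TheoremIntroEx}, which supplies the structural dichotomy that a subdirectly irreducible Ex-algebra is either an ortholattice or intuitionistic; redeveloping that dichotomy is the genuinely hard part, and the present theorem is obtained from it by the ``extra work'' above. Within that extra work the delicate point is conceptual rather than computational: the intuitionistic axioms cannot be adjoined to fundamental logic verbatim, since they are not valid in orthologic, so they must be weakened — and the content of axiom (2) is precisely that the correct weakening of $\chi_j\vdash\theta_j$ is $\chi_j\vdash\theta_j\vee\lnot\theta_j$, which discards exactly the entailment $\chi_j\vdash\lnot\lnot\theta_j$, an entailment that the Glivenko transfer restores for free on every intuitionistic component while remaining silent (as it must) on the ortho components. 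One should note, finally, that the statement implicitly presupposes $\mathcal{L}_O$ and $\mathcal{L}_I$ consistent: if $\mathcal{L}_O$ were inconsistent its axioms would not be classically valid and axiom (1) could fail in $\mathcal{L}_I$.
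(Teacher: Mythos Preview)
Your proof is correct, and it takes a genuinely different route from the paper's. The paper does not invoke J\'onsson's Lemma or subdirect irreducibility at all; instead it re-opens the Ex-Embedding Theorem and uses the specific construction: for any $\mathcal{L}^{*}$-lattice $L$ it takes the explicit embedding $e\colon L\hookrightarrow O_L\times A_L$ and then checks, using the fact that \emph{both} $O_L$ and $A_L$ are surjective homomorphic images of $L$ (not merely factors of some product containing $L$), that the axioms (1) and (2) descend to $O_L$ and $A_L$ respectively, yielding $O_L\in\mathsf{V}_O$ and $A_L\in\mathsf{V}_I$. The argument that (2) on a Heyting lattice recovers $\chi_j\leq\theta_j$ via Glivenko and distributivity is the same in both proofs.

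What each approach buys: your argument is more modular, treating Theorem~\ref{TheoremIntroEx} as a genuine black box and deriving the dichotomy ``every subdirectly irreducible Ex-lattice is an ortholattice or a Heyting lattice'' from general universal algebra; it would work verbatim for any pair of varieties whose join is axiomatized. The paper's argument, by contrast, needs the internal structure of the embedding (that both coordinates are quotients of $L$), not just its existence --- but in return it avoids Birkhoff's subdirect representation and J\'onsson's Lemma, and yields the slightly sharper information that the \emph{specific} quotients $O_L$ and $A_L$ already lie in $\mathsf{V}_O$ and $\mathsf{V}_I$, a fact the paper reuses in the proof of Theorem~\ref{prodthm}. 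Your closing remark about consistency is apt and is glossed over in the paper's ``By assumption, $\phi_i\vdash\psi_i$ is classically valid.''
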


We mention some particular cases as corollaries:

\begin{corollary}\label{CorollaryOML}
The joint validities of ortho-modular logic and the implication-free fragment of intuitionistic logic are axiomatized by fundamental logic augmented by the axiom  \textup{(Ex)} and
\[(a \vee \lnot a) \wedge (a \vee b) \vdash a \vee (\lnot a \wedge (a \vee b)).\]
\end{corollary}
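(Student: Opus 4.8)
The plan is to read the corollary off Theorem~\ref{TheoremMain}; essentially all the work goes into choosing a convenient axiomatization of orthomodular logic over orthologic. Recall that orthomodular logic is the logic of orthomodular lattices, and that an ortholattice is orthomodular iff it satisfies the identity $a\vee(\lnot a\wedge(a\vee b))=a\vee b$ for all $a,b$: this follows from the orthomodular law by taking $b$ to be $a\vee b$, and conversely implies it, since $a\leq c$ forces $a\vee c=c$. As the inequality $a\vee(\lnot a\wedge(a\vee b))\leq a\vee b$ holds in every ortholattice, over orthologic the orthomodular law amounts to the single axiom $a\vee b\vdash a\vee(\lnot a\wedge(a\vee b))$; and since $a\vee\lnot a$ is the top element of any ortholattice, so that $(a\vee\lnot a)\wedge x=x$ identically, this is in turn equivalent over orthologic to the displayed axiom, which I will call $(\mathrm B)$. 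Thus I would present orthomodular logic as orthologic extended by the single axiom $(\mathrm B)$.

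With that in hand I would apply Theorem~\ref{TheoremMain} with $\mathcal L_O$ this presentation of orthomodular logic and $\mathcal L_I$ the implication-free fragment of intuitionistic logic, with no additional axioms (so the family $\{\chi_j\vdash\theta_j\}_j$ is empty). The theorem then says that the joint validities $J$ of the two logics are axiomatized over fundamental logic by \textup{(Ex)} together with the single clause-(1) axiom
\[
(\mathrm B^{\lnot\lnot})\qquad (a\vee\lnot a)\wedge(a\vee b)\vdash\lnot\lnot\big(a\vee(\lnot a\wedge(a\vee b))\big),
\]
that is, $(\mathrm B)$ with a double negation inserted on the right. It remains to see that replacing $(\mathrm B^{\lnot\lnot})$ by $(\mathrm B)$ changes nothing, i.e.\ that $J$ equals the logic $L$ of the corollary. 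One inclusion is immediate: $(\mathrm B)$ proves $(\mathrm B^{\lnot\lnot})$ over fundamental logic alone, by cutting it with the instance $a\vee(\lnot a\wedge(a\vee b))\vdash\lnot\lnot\big(a\vee(\lnot a\wedge(a\vee b))\big)$ of the rule $\phi\vdash\lnot\lnot\phi$, so $L$ proves every axiom of the presentation of $J$ just obtained, whence $J\subseteq L$. For the other inclusion I would argue by soundness: fundamental logic is a sublogic of both orthomodular and intuitionistic logic; \textup{(Ex)} is valid in orthologic (hence in orthomodular logic) and in intuitionistic logic; and $(\mathrm B)$ is valid in every orthomodular lattice (there it is the orthomodular law) and in every Heyting algebra (there $(a\vee\lnot a)\wedge(a\vee b)=\big(a\wedge(a\vee b)\big)\vee\big(\lnot a\wedge(a\vee b)\big)=a\vee(\lnot a\wedge(a\vee b))$ by distributivity and absorption). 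Since the rules of fundamental logic preserve validity in both classes, every theorem of $L$ is valid in both logics, so $L\subseteq J$, and hence $L=J$.

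I do not expect any genuinely hard step; the one point that needs care is exactly the choice made in the first paragraph. Feeding the ``bare'' orthomodular law $a\vee b\vdash a\vee(\lnot a\wedge(a\vee b))$ into clause~(1) of Theorem~\ref{TheoremMain} would instead yield $a\vee b\vdash\lnot\lnot\big(a\vee(\lnot a\wedge(a\vee b))\big)$, and this is not obviously interderivable with $(\mathrm B)$ over fundamental logic plus \textup{(Ex)} --- one would seem to need $a\vee b\vdash a\vee\lnot a$, which is valid in neither component. Padding the antecedent with $a\vee\lnot a$, which is harmless in ortholattices, is precisely what simultaneously makes the axiom intuitionistically valid and strong enough to re-derive its own double-negation weakening, so that the $\lnot\lnot$ supplied by clause~(1) becomes redundant.
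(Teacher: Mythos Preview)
Your argument is correct, and in fact slightly slicker than the paper's. The paper remarks that the corollary ``is not immediate from the statement of Theorem~\ref{TheoremMain} (which would yield a different axiomatization), but follows from the proof,'' and accordingly re-runs the argument of \S\ref{SectProofMainProof}: it lets $\mathcal L$ be fundamental logic plus \textup{(Ex)} plus $(\mathrm B)$, takes an $\mathcal L$-lattice $L$, invokes the Ex-Embedding Theorem directly, and checks that $O_L$ is orthomodular because $(\mathrm B)$ passes to the quotient and is equivalent to the orthomodular law in any ortholattice. You instead apply Theorem~\ref{TheoremMain} as a black box, but with $(\mathrm B)$ itself (rather than the bare orthomodular law) as the presenting axiom of $\mathcal L_O$; this yields the axiomatization $\textup{FL}+\textup{(Ex)}+(\mathrm B^{\lnot\lnot})$, and you then close the loop by observing that $(\mathrm B)\vdash(\mathrm B^{\lnot\lnot})$ fundamentally while $(\mathrm B)$ is a joint validity on its own. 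The crux in both approaches is the same observation---that padding the antecedent with $a\vee\lnot a$ makes the axiom intuitionistically valid while remaining equivalent to orthomodularity over orthologic---but your route avoids reopening the embedding argument, at the cost of the extra (easy) step of comparing $(\mathrm B)$ with $(\mathrm B^{\lnot\lnot})$.
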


\begin{corollary}\label{CorollaryDML}
The joint validities of orthologic and De Morgan logic are axiomatized by fundamental logic augmented by the axiom  \textup{(Ex)} and weak excluded middle.
\end{corollary}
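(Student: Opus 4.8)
The plan is to obtain Corollary~\ref{CorollaryDML} as a direct specialization of Theorem~\ref{TheoremMain}, after choosing a convenient presentation of De Morgan logic. I would apply Theorem~\ref{TheoremMain} with $\mathcal{L}_O$ taken to be orthologic itself, so that the family $\{\phi_i\vdash\psi_i\}_i$ is empty and clause (1) contributes nothing, and with $\mathcal{L}_I$ taken to be De Morgan logic (in the signature $\{\wedge,\vee,\lnot\}$), presented as intuitionistic logic extended by the single axiom $\chi\vdash\theta$ where $\theta:=\lnot p\vee\lnot\lnot p$ is weak excluded middle and $\chi:=\lnot(p\wedge\lnot p)$. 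The point of padding with this particular premise is that $\lnot(p\wedge\lnot p)$ is a theorem of intuitionistic logic, so $\chi\vdash\theta$ is equivalent over intuitionistic logic to the one-sided weak excluded middle and the presentation is legitimate, while at the same time $\lnot(p\wedge\lnot p)$ is a theorem of fundamental logic, which is what will make the translated axiom collapse in the next step. Theorem~\ref{TheoremMain} then gives that the joint validities of orthologic and De Morgan logic are axiomatized over fundamental logic by \textup{(Ex)} together with the single axiom
\[(\dagger)\qquad \lnot(p\wedge\lnot p)\ \vdash\ (\lnot p\vee\lnot\lnot p)\vee\lnot(\lnot p\vee\lnot\lnot p).\]

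The second step is to check that, over fundamental logic alone, $(\dagger)$ and weak excluded middle are interderivable. One direction is immediate: from $\vdash\lnot p\vee\lnot\lnot p$ one gets $(\dagger)$ by $\vee$-introduction and monotonicity in the premise. For the converse, write $\alpha:=\lnot p\vee\lnot\lnot p$; since $\vdash\lnot(p\wedge\lnot p)$ holds in fundamental logic (by the $\lnot$-introduction and $\lnot$-elimination rules applied to $p\wedge\lnot p$), a cut applied to $(\dagger)$ yields $\vdash\alpha\vee\lnot\alpha$. Now $\lnot p\vdash\alpha$ by $\vee$-introduction, so antitonicity of $\lnot$ gives $\lnot\alpha\vdash\lnot\lnot p$, and $\lnot\lnot p\vdash\alpha$ again by $\vee$-introduction; chaining these, $\lnot\alpha\vdash\alpha$. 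Together with $\alpha\vdash\alpha$ and $\vee$-elimination this yields $\alpha\vee\lnot\alpha\vdash\alpha$, and one further cut gives $\vdash\alpha$, i.e. weak excluded middle. Hence the axiom system produced by Theorem~\ref{TheoremMain} in this case is equivalent to fundamental logic augmented by \textup{(Ex)} and weak excluded middle, which is the assertion of the corollary.

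I do not expect a genuine obstacle: the corollary is essentially a bookkeeping consequence of Theorem~\ref{TheoremMain}, the only subtlety being the choice in the first paragraph of a presentation of weak excluded middle for which the image of clause (2) collapses back to weak excluded middle over fundamental logic. All the verifications involved are routine derivations in fundamental logic (behaviour of $\vee$-introduction and $\vee$-elimination, antitonicity of $\lnot$, and the non-contradiction theorem $\vdash\lnot(p\wedge\lnot p)$); none of the machinery behind Theorem~\ref{TheoremMain} or the axiom \textup{(Ex)} is re-opened. If one preferred not to use Theorem~\ref{TheoremMain} as a black box, the soundness half is anyway immediate --- weak excluded middle holds in De Morgan logic by definition and in orthologic since $\lnot\lnot p\vee\lnot p=p\vee\lnot p=1$ in every ortholattice --- so that only the completeness half genuinely appeals to Theorem~\ref{TheoremMain}.
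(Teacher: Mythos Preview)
Your proposal is correct and follows essentially the same route as the paper: apply Theorem~\ref{TheoremMain} with $\mathcal{L}_O$ orthologic and $\mathcal{L}_I$ De Morgan logic, and then verify in fundamental logic that the translated axiom collapses back to weak excluded middle via the identity $(\lnot p\vee\lnot\lnot p)\vee\lnot(\lnot p\vee\lnot\lnot p)=\lnot p\vee\lnot\lnot p$, which is exactly what the paper records. The one small detour you take --- padding the premise with $\lnot(p\wedge\lnot p)$ --- is unnecessary: you may simply take $\chi=\top$ in Theorem~\ref{TheoremMain}, obtaining $\top\vdash(\lnot p\vee\lnot\lnot p)\vee\lnot(\lnot p\vee\lnot\lnot p)$ directly, and your own argument that $\lnot\alpha\vdash\alpha$ (hence $\alpha\vee\lnot\alpha\vdash\alpha$) already finishes the job without the extra cut.
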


The article is organized as follows. In \S\ref{SectFundamental} we review fundamental logic and its algebraic semantics. In \S\ref{SectExL} we study lattices validating axiom \textup{(Ex)} and establish their properties. In \S\ref{SectVEmb} we establish our main technical tool, an embedding theorem for Ex-lattices. In \S\ref{SectExLogic} we study Ex-logic and prove Theorem \ref{TheoremIntroEx}, as well a decomposition theorem separating \textup{(Ex)} into three independent axioms. In \S\ref{SectProofMain} we prove Theorem \ref{TheoremMain} and establish an isomorphism theorem between the lattice of super-Ex logics and the product of the lattices of superintuitionistic logics and superorthologics. In \S\ref{SectSubExL} we prove that there are infinitely many sub-Ex logics.

\section{Fundamental Logic} \label{SectFundamental}
Throughout, we work with propositional logic. Formulas are obtained from propositional variables and the constants $\top$ (truth) and $\bot$ (falsity) by means of the conjunctives $\wedge, \vee$, and $\lnot$. We emphasize that our signature throughout does not include implication, as this is not part of the signature of orthologic.

\begin{definition}
An \textit{introduction--elimination logic} is a binary relation $\vdash$ on propositional formulas such that for all formulas $\phi,\psi, \chi$, we have:
\begin{enumerate}
\item $\phi\vdash\phi$;
\item $\bot \vdash \phi$;
\item $\phi \vdash \top$;
\item $\phi\wedge\psi \vdash \phi$;
\item $\phi\wedge\psi\vdash \psi$;
\item $\phi\vdash \phi\vee\psi$;
\item $\psi\vdash \phi\vee\psi$;
\item $\phi\vdash\lnot\lnot\phi$;
\item $\phi\wedge\lnot\phi\vdash\bot$;
\item if $\phi\vdash\psi$ and $\psi\vdash\chi$, then $\phi\vdash\chi$;
\item if $\phi\vdash \psi$ and $\phi\vdash\chi$, then $\phi\vdash\psi\wedge\chi$; 
\item if $\phi\vdash\chi$ and $\psi\vdash \chi$, then $\phi\vee\psi\vdash \chi$;
\item if $\phi\vdash\psi$, then $\lnot\psi\vdash\lnot\phi$.
\end{enumerate}
\end{definition}

Fundamental logic was introduced by Holliday \cite{Ho23} and has a variety of equivalent definitions. The simplest definition is: fundamental logic is the smallest introduction--elimination logic. Thus, fundamental logic is able to introduce and use the logical connectives $\wedge,\vee,\lnot$, and it cannot do much more. From it, we can obtain orthologic by adding double-negation elimination, and we can obtain excluded middle by adding distributivity and the pseudo-complement law.

Fundamental logic can also be defined in terms of a Fitch-style natural deduction system (see \cite{Ho23}) and in terms of a sequent calculus (see \cite{AB24}). For our purposes, the algebraic semantics will be the most useful. In what follows, we assume some basic notions of universal algebra and lattice theory, such as the notion of the variety of ordered algebraic structures associated to a given logic $L$, which we denote by $\mathbb{V}(L)$ (intuitively, this is the universal class of all ordered structures on which the logic $L$ is valid). We refer the reader to \cite{Be15,Da02} for standard introductions on the topic.

\subsection{Fundamental lattices}
\label{SectFL}
\begin{definition}
A structure $(L, \wedge, \vee, 0, 1, \lnot)$ is said to be a \textit{fundamental lattice}  if $(L, \wedge, \vee, 0, 1)$ is a bounded lattice and $\lnot$ is a \textit{weak pseudo-complement} i.e., a unary operation for which the following hold:
\begin{enumerate}
\item \textit{antitonicity}, i.e., $a \leq b$ implies $\lnot b \leq \lnot a$ for all $a,b \in L$;
\item \textit{semi-complementation}, i.e., $a \wedge \lnot a = 0$ for all $a \in L$;
\item \textit{double-negation introduction}, i.e., $a \leq \lnot\lnot a$ for all $a \in L$.
\end{enumerate}
\end{definition} 

If this causes no confusion, we may avoid explicitly mentioning the operations on $L$ and simply refer to sets $L$ as lattices of one kind or another.
Given a weakly pseudo-complemented, bounded lattice $L$, a \textit{valuation} $v$ into $L$ assigns elements of $L$ to propositional variables. Valuations can be extended to arbitrary propositional formulas in the natural way. 
Often we abuse notation by omitting mention of $v$ if clear from the context, and identifying formulas with elements of $L$.
We say that an entailment $\varphi\vdash \psi$ is \textit{valid} in $L$, if 
\[\varphi \leq \psi\]
holds for all valuations into $L$.

\begin{proposition}[Holliday \cite{Ho23}]\label{PropositionFLComplete}
For formulas $\varphi,\psi$, the entailment $\varphi \vdash \psi$ is fundamentally true if and only if it holds in all fundamental lattices.
\end{proposition}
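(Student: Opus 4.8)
The plan is the standard Lindenbaum--Tarski argument, exploiting the definition of fundamental logic as the smallest introduction--elimination logic.

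\emph{Soundness.} First I would observe that the set of entailments $\varphi\vdash\psi$ that hold in every fundamental lattice is itself an introduction--elimination logic. Clauses (1)--(7) hold in any bounded lattice; clause (8) is double-negation introduction, clause (9) is semi-complementation, and clause (13) is antitonicity, so each holds in every fundamental lattice by definition; clause (10) is transitivity of $\le$; and clauses (11)--(12) say that $a\wedge b$ and $a\vee b$ are, respectively, the greatest lower bound and least upper bound of $\{a,b\}$, which holds in any lattice. Since fundamental logic is contained in every introduction--elimination logic, every fundamentally true entailment holds in all fundamental lattices.

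\emph{Completeness.} For the converse I would build the Lindenbaum--Tarski algebra. Writing $\varphi\vdash\psi$ for derivability in fundamental logic, set $\varphi\approx\psi$ iff $\varphi\vdash\psi$ and $\psi\vdash\varphi$; by (1) and (10) this is an equivalence relation. Using (4)--(7) with (11)--(12) one checks it is a congruence for $\wedge$ and $\vee$, and using (13) together with the symmetry of $\approx$ it is a congruence for $\lnot$. Let $L_\vdash$ be the quotient, with induced operations, with $0=[\bot]$ and $1=[\top]$, and ordered by $[\varphi]\le[\psi]\iff\varphi\vdash\psi$. Then I would verify that $L_\vdash$ is a fundamental lattice: (4)--(5) and (11) make $[\varphi\wedge\psi]$ the meet and (6)--(7) and (12) make $[\varphi\vee\psi]$ the join; (2) and (3) give boundedness; (13) gives antitonicity of $\lnot$; (9) together with $0\le a$ for all $a$ gives semi-complementation; and (8) gives double-negation introduction. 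Finally the valuation $v(p)=[p]$ satisfies $v(\varphi)=[\varphi]$ for all $\varphi$ by a trivial induction, so if $\varphi\vdash\psi$ is not fundamentally true then $[\varphi]\not\le[\psi]$, witnessing the failure of $\varphi\vdash\psi$ in the fundamental lattice $L_\vdash$. Contrapositively, an entailment holding in all fundamental lattices is fundamentally true.

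I do not expect a genuine obstacle: the result is due to Holliday and the argument is routine. The only points requiring care are checking that $\approx$ is a congruence for $\lnot$ --- which is precisely where clause (13) is used, and why it matters that antitonicity is built into the logic --- and keeping track that each of the thirteen closure conditions corresponds to exactly one of the defining features of a fundamental lattice.
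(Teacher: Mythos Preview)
Your argument is correct and is the standard Lindenbaum--Tarski construction one would expect. Note, however, that the paper does not actually prove this proposition: it is stated with attribution to Holliday \cite{Ho23} and no proof is given in the text, so there is no ``paper's own proof'' to compare against beyond the implicit reference to the original source, where the same routine argument is carried out.
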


Holliday \cite{Ho23} observes that every bounded lattice can be equipped with a weak pseudo-complement. In general, there will be several ways of doing so.

Fundamental lattices can be refined into semantics for intuitionistic logic and orthologic. For orthologic, we must impose double-negation elimination, which also implies the law of excluded middle.

\begin{definition}
An \textit{ortholattice} is a fundamental lattice $(L, \wedge, \vee, 0, 1, \lnot)$ in which $\lnot$ satisfies double-negation elimination: $\lnot\lnot a = a$ for all $a \in L$.
\end{definition}

It is well known that orthologic is the logic of ortholattices, in the sense of Proposition \ref{PropositionFLComplete}; see Goldblatt \cite{Go74}. This can also be taken as the definition of orthologic.
For intuitionistic logic, we must impose distributivity and pseudo-complementation:

\begin{definition}
Let $(L, \wedge, \vee, 0, 1, \lnot)$ be a fundamental lattice. We say that $\lnot$ is a \textit{pseudo-complement} if for all $a,b\in L$, $a\wedge b = 0$ implies $b \leq \lnot a$. We say that $(L, \wedge, \vee, 0, 1, \lnot)$ is \textit{distributive} if conjunction distributes relative to disjunction and vice-versa. A \textit{Heyting lattice} is a pseudo-complemented, distributive fundamental lattice.
\end{definition}

The notion of a \textit{Heyting lattice} differs from the usual notion of a \textit{Heyting algebra}, which often involves implication. We consider implication-free signatures here as there is no natural notion of implication for ortholattices. Note that the reduced expressivity stemming from the lack of implication is offset by the explicit inclusion of entailment (as intuitionistic logic satisfies the deduction theorem). It is well-known that (the implication-free fragment of) intuitionistic logic is the logic of Heyting lattices, in the sense of Proposition \ref{PropositionFLComplete}. 

\begin{remark}\label{RemarkDeMorgan}
As noted by Holliday \cite{Ho23}, all the intuitionistically valid De Morgan laws are valid in all fundamental lattices. In particular, we have $\lnot (a \vee b) = \lnot a \wedge \lnot b$ and $\lnot a \vee \lnot b \leq \lnot (a \wedge b)$.
\end{remark}
Other properties of intuitionistic logic involving negation are true fundamentally, such as double-negation elimination for negated formulas.

\section{Ex-Lattices}\label{SectExL}

Our motivating question for this work is to study the joint truths of constructivism and quantum logic. We shall begin by equating these philosophies with intuitionistic logic and orthologic, respectively. 
In algebraic terms, our task amounts to investigating the inequalities valid in all ortholattices and in all Heyting lattices. The key to answer this question is to consider the following definition:

\begin{definition}
A \textit{Ex-lattice} is a fundamental lattice in which the following axiom
(Ex) is valid:
\begin{align*}
    \lnot \bigg[a &\wedge \Big((b\wedge c) \vee (b \wedge d)\Big)\bigg] \wedge a \wedge (c \vee e) \wedge \lnot\lnot f \leq 
\lnot\lnot(a \wedge f) \\
    & \wedge \bigg[(a\wedge c) \vee (a \wedge e) \vee f\bigg] \wedge 
    \bigg[\Big( b \wedge (c \vee d)\Big) \vee \lnot \Big( b \wedge (c \vee d) \Big)\bigg].
\end{align*}
\end{definition}

\begin{lemma}
    \textup{(Ex)} is valid in orthologic. Thus, every ortholattice is an Ex-lattice.
\end{lemma}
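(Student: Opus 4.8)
The plan is to work in an arbitrary ortholattice $L$ and verify the inequality (Ex) directly, exploiting the fact that in an ortholattice $\lnot\lnot x = x$ for every $x$, so every element is "regular." The right-hand side of (Ex) is a meet of three conjuncts, so it suffices to show that the left-hand side $\ell := \lnot\!\big[a \wedge ((b\wedge c)\vee(b\wedge d))\big] \wedge a \wedge (c\vee e)\wedge\lnot\lnot f$ lies below each of (i) $\lnot\lnot(a\wedge f)$, (ii) $(a\wedge c)\vee(a\wedge e)\vee f$, and (iii) $(b\wedge(c\vee d))\vee\lnot(b\wedge(c\vee d))$.

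Conjunct (iii) is immediate in any ortholattice, since double-negation elimination gives the law of excluded middle: for any $x$, $x\vee\lnot x = \lnot\lnot x \vee \lnot x = \lnot(\lnot x\wedge x) = \lnot 0 = 1$, so the third bracket on the right equals $1$ and is above everything. Conjunct (i) should also be routine: in an ortholattice $\lnot\lnot(a\wedge f) = a\wedge f$, and $\ell\leq a$ directly, while $\ell\leq\lnot\lnot f = f$; hence $\ell\leq a\wedge f = \lnot\lnot(a\wedge f)$. So the whole content of the lemma sits in conjunct (ii), and this is the step I expect to be the main obstacle — it is presumably the reason (Ex) has the shape it does.

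For conjunct (ii) the idea is to use the first conjunct of $\ell$, namely $\lnot[a\wedge((b\wedge c)\vee(b\wedge d))]$, together with $\ell\leq a$ and $\ell\leq c\vee e$, to force $\ell$ below $(a\wedge c)\vee(a\wedge e)\vee f$. The natural move is: since $\ell \le a \wedge (c\vee e)$, we have $\ell \le a\wedge(c\vee e)$; one would like to distribute, but ortholattices are not distributive, so instead I would argue with orthocomplements. Write $x = a\wedge((b\wedge c)\vee(b\wedge d))$; from $\ell\le\lnot x$ and $\ell\le a$ we get $\ell\le a\wedge\lnot x$. Now in an ortholattice $a\wedge\lnot x \le \lnot x$, and since $x\le a$ we have $\lnot a\le\lnot x$, suggesting we compare with $\lnot a\vee(a\wedge\lnot x) \le \lnot x$ or similar. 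The key algebraic fact to extract is that $a\wedge\lnot x$ is orthogonal to $b\wedge c$ and to $b\wedge d$ in a suitable sense: indeed $x\ge a\wedge b\wedge c$ gives $\lnot x\le\lnot(a\wedge b\wedge c)$, which combined with $\ell\le a$ should let one peel the disjunction $c\vee e$ apart using the orthomodular-flavored identity $u = (u\wedge v)\vee(u\wedge\lnot v)$ — wait, that requires orthomodularity, which we do not have for general ortholattices; so instead I would rely only on the weaker ortholattice law $u\wedge(v\vee(u'\wedge w))$ manipulations, or more safely reduce everything to checking the inequality after applying $\lnot$ to both sides and using $\lnot(p\vee q) = \lnot p\wedge\lnot q$ together with antitonicity. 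Concretely: $\ell\le(a\wedge c)\vee(a\wedge e)\vee f$ is equivalent to $\lnot[(a\wedge c)\vee(a\wedge e)\vee f] \le \lnot\ell$, i.e. $\lnot(a\wedge c)\wedge\lnot(a\wedge e)\wedge\lnot f \le \lnot\ell$, and one can try to verify this meet-form inequality term by term using the De Morgan laws available in every fundamental lattice (Remark~\ref{RemarkDeMorgan}) plus double-negation elimination.

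In summary: conjuncts (i) and (iii) reduce to $\lnot\lnot(a\wedge f)=a\wedge f$ and the law of excluded middle, both trivial consequences of double-negation elimination; conjunct (ii) is the substantive part, and I would prove it by passing to orthocomplements, invoking De Morgan and antitonicity to turn the join on the right into a meet, and then checking the resulting inequality using $\ell\le a$, $\ell\le c\vee e$, $\ell\le f$, and $\ell\le\lnot[a\wedge((b\wedge c)\vee(b\wedge d))]$. The main obstacle is navigating conjunct (ii) without distributivity; the payoff is that once the orthocomplemented reformulation is set up, only the ortholattice axioms and the fundamental-lattice De Morgan laws are needed, with no appeal to orthomodularity.
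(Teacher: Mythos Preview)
Your treatment of conjuncts (i) and (iii) is correct and matches the paper's proof exactly: in an ortholattice $\lnot\lnot f = f$ gives $\ell \le a \wedge f = \lnot\lnot(a\wedge f)$, and excluded middle makes the third bracket equal to $1$.

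The gap is in conjunct (ii). You declare it ``the substantive part'' and sketch a convoluted plan involving the first conjunct $\lnot[a\wedge((b\wedge c)\vee(b\wedge d))]$, a flirtation with orthomodularity that you correctly abandon, and an unfinished dualization via orthocomplements. None of this is needed, and your sketch does not actually complete the argument. You already observed in your treatment of (i) that $\ell \le \lnot\lnot f = f$; but $f$ is literally one of the disjuncts on the right of (ii), so
\[
\ell \le f \le (a\wedge c)\vee(a\wedge e)\vee f
\]
and you are done. The paper's proof makes exactly this move: it proves the stronger inequality $a\wedge\lnot\lnot f \le \lnot\lnot(a\wedge f)\wedge f\wedge[\,\cdot\vee\lnot\cdot\,]$, where the middle conjunct is just $f$ rather than the full disjunction, precisely because $\lnot\lnot f = f$ already handles that term. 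The antecedent's first conjunct $\lnot[a\wedge((b\wedge c)\vee(b\wedge d))]$ and the factor $c\vee e$ play no role in the ortholattice case; they are there for the intuitionistic half of (Ex).

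So your plan for (ii) is not wrong so much as it is unmotivated and incomplete, and the attempt to exploit the $b,c,d$-structure without distributivity or orthomodularity would be genuinely difficult---whereas the intended argument is a one-liner you had already set up.
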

\begin{proof}
We prove that the following strengthening of \textup{(Ex)} is valid in all ortholattices
\begin{align*}
     a \wedge \lnot\lnot f \leq
\lnot\lnot(a \wedge f) \wedge f \wedge 
    \bigg[\Big( b \wedge (c \vee d)\Big) \vee \lnot \Big( b \wedge (c \vee d) \Big)\bigg].
\end{align*}
We need to show that $a \wedge \lnot\lnot f$ entails each of the three conjuncts in the consequent of the displayed axiom.
First, we have 
\[a \wedge \lnot\lnot f \leq a \wedge f \leq \lnot\lnot (a \wedge f)\]
by double-negation elimination. Similarly, we have
$\lnot\lnot f\leq f$.
Finally, the third conjunct is an instance of excluded middle and thus also valid.
\end{proof}

\begin{lemma}
    \textup{(Ex)} is valid in intuitionistic logic. Thus, every Heyting lattice is an Ex-lattice.
\end{lemma}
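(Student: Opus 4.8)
The plan is to argue directly inside an arbitrary Heyting lattice $L$, identifying each formula with its value under a fixed valuation, and to show that the left-hand side $m$ of \textup{(Ex)} lies below each of the three conjuncts of the right-hand side; the meet of those three conjuncts is the right-hand side, so this suffices. The only structure I intend to use is distributivity, semi-complementation ($x\wedge\lnot x=0$), and the defining property of a pseudo-complement ($u\wedge v=0\Rightarrow v\leq\lnot u$), so the whole argument stays inside the implication-free fragment and never needs to pass to a genuine Heyting algebra.

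The single tool I would isolate first is the identity $a\wedge\lnot(a\wedge x)=a\wedge\lnot x$, valid in any pseudo-complemented lattice: the inequality $\leq$ holds because $(a\wedge\lnot(a\wedge x))\wedge x=(a\wedge x)\wedge\lnot(a\wedge x)=0$ forces $a\wedge\lnot(a\wedge x)\leq\lnot x$, and the reverse because $(a\wedge\lnot x)\wedge(a\wedge x)=a\wedge(\lnot x\wedge x)=0$ forces $a\wedge\lnot x\leq\lnot(a\wedge x)$. Next I would use distributivity to rewrite $m=\lnot[a\wedge b\wedge(c\vee d)]\wedge a\wedge(c\vee e)\wedge\lnot\lnot f$ and abbreviate $x:=b\wedge(c\vee d)$, so that $a\wedge((b\wedge c)\vee(b\wedge d))=a\wedge x$ and hence $m\leq\lnot(a\wedge x)\wedge a\wedge(c\vee e)\wedge\lnot\lnot f$.

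Then the three conjuncts fall out quickly. For the second conjunct, distributivity gives $m\leq a\wedge(c\vee e)=(a\wedge c)\vee(a\wedge e)\leq(a\wedge c)\vee(a\wedge e)\vee f$. For the third conjunct, the displayed identity with $x$ as above gives $m\leq a\wedge\lnot(a\wedge x)=a\wedge\lnot x\leq\lnot x\leq x\vee\lnot x$, which is exactly $(b\wedge(c\vee d))\vee\lnot(b\wedge(c\vee d))$. For the first conjunct, the same identity with $f$ in place of $x$ yields $a\wedge\lnot\lnot f\wedge\lnot(a\wedge f)=\lnot\lnot f\wedge(a\wedge\lnot f)=0$, so by the pseudo-complement property $a\wedge\lnot\lnot f\leq\lnot\lnot(a\wedge f)$, whence $m\leq a\wedge\lnot\lnot f\leq\lnot\lnot(a\wedge f)$. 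Combining the three, $m$ lies below the right-hand side of \textup{(Ex)}; as the valuation and $L$ were arbitrary, \textup{(Ex)} is valid in intuitionistic logic.

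I do not anticipate a real obstacle here. The only conceptual point — which is also the point of the axiom — is that in the third conjunct the instance of excluded middle for $b\wedge(c\vee d)$ is not free but is "purchased" from the negative hypothesis on the left: having $\lnot(a\wedge x)$ together with $a$ already decides $x$ in the weak, relativized sense $a\wedge\lnot(a\wedge x)=a\wedge\lnot x$. The only care needed is to make sure every inference uses the pseudo-complement property rather than relative pseudo-complementation, which is exactly why I would state the identity $a\wedge\lnot(a\wedge x)=a\wedge\lnot x$ as a preliminary lemma and derive it from $u\wedge v=0\Rightarrow v\leq\lnot u$ alone.
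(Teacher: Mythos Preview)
Your proof is correct and follows essentially the same route as the paper: reduce to three conjuncts, use distributivity for $(a\wedge c)\vee(a\wedge e)$, and use the pseudo-complement consequence $a\wedge\lnot(a\wedge x)\leq\lnot x$ for the excluded-middle conjunct. The only cosmetic difference is that the paper handles $\lnot\lnot(a\wedge f)$ via $a\leq\lnot\lnot a$ and the intuitionistic validity $\lnot\lnot a\wedge\lnot\lnot f\leq\lnot\lnot(a\wedge f)$, whereas you reuse your identity $a\wedge\lnot(a\wedge f)=a\wedge\lnot f$ to get $a\wedge\lnot\lnot f\wedge\lnot(a\wedge f)=0$ directly; both are one-line uses of pseudo-complementation.
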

\begin{proof}
We prove that the following strengthening of \textup{(Ex)} is valid in all Heyting lattices:
\begin{align*}
    \lnot \bigg[a &\wedge \Big((b\wedge c) \vee (b \wedge d)\Big)\bigg] \wedge a \wedge (c \vee e) \wedge \lnot\lnot f \\
    &\leq 
\lnot\lnot(a \wedge f) \wedge \bigg[(a\wedge c) \vee (a \wedge e) \bigg] \wedge 
     \lnot \Big( b \wedge (c \vee d) \Big).
\end{align*}
Intuitionistically, we have
\begin{align}\label{eqVInt1}
    a \wedge \lnot\lnot f \leq \lnot\lnot a\wedge \lnot\lnot f \leq \lnot\lnot (a \wedge f).
\end{align}
By distributivity, we have 
\begin{align}\label{eqVInt2}
a \wedge (c \vee e) \leq (a \wedge c) \vee (a \wedge e) \leq (a \wedge c) \vee (a \wedge e).
\end{align}
Now, from the inequation $\neg(a \land p) \land a\wedge p \leq 0$ we obtain
\begin{align*}
    \lnot (a \wedge p) \wedge a \leq \lnot p.
\end{align*}
By instantiating $p = b \wedge (c \vee d)$, we obtain
\begin{align*}
    \lnot \Big(a \wedge b \wedge (c \vee d)\Big) \wedge a \leq \lnot \Big(b \wedge (c \vee d)\Big).
\end{align*}
which by distributivity implies the following:
\begin{align}\label{eqVInt3}
\lnot \bigg[a \wedge \Big((b\wedge c) \vee (b \wedge d)\Big)\bigg] \wedge a \leq \lnot \Big(b \wedge (c \vee d)\Big).
\end{align}
Putting together \eqref{eqVInt1}  \eqref{eqVInt2}, and \eqref{eqVInt3}, we  derive the strengthening of \textup{(Ex)} stated at the beginning of the proof.
\end{proof}

Thus, all ortholattices and all Heyting lattices are Ex-lattices. There is a technical converse to this fact, Theorem \ref{mainthm} in \S\ref{SectVEmb}, which will play a central role in our axiomatizations.

\subsection{Properties of Ex-lattices}

We now observe various consequences of the axiom (Ex).

\begin{lemma}\label{LemmaNu}
The following formula \textup{(Nu)}
\begin{align*}
\lnot\lnot p \wedge \lnot \lnot q \leq \lnot\lnot (p \wedge q)
\end{align*}
is valid in all Ex-lattices.
\end{lemma}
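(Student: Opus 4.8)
The plan is to squeeze a much weaker inequality out of \textup{(Ex)} by a suitable specialization, and then bootstrap it to \textup{(Nu)} using nothing more than the three defining properties of a fundamental lattice (antitonicity, semi-complementation, double-negation introduction).

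\emph{Step 1: specializing \textup{(Ex)}.} Since \textup{(Ex)} is required to hold for all elements of the lattice, I may in particular set $b=c=d=0$ and $e=1$. Then $(b\wedge c)\vee(b\wedge d)=0$, so the first conjunct on the left becomes $\lnot(a\wedge 0)=\lnot 0=1$; also $c\vee e=1$; and on the right $(a\wedge c)\vee(a\wedge e)\vee f=a\vee f$, while $b\wedge(c\vee d)=0$ makes the last bracket $0\vee\lnot 0=1$. Hence \textup{(Ex)} degenerates to $a\wedge\lnot\lnot f\leq\lnot\lnot(a\wedge f)\wedge(a\vee f)$, and in particular to the inequality
\[
(\ast)\qquad a\wedge\lnot\lnot f\leq\lnot\lnot(a\wedge f),
\]
which is therefore valid in every Ex-lattice.

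\emph{Step 2: bootstrapping to \textup{(Nu)}.} Here I will use the standard fact that in any fundamental lattice $\lnot\lnot\lnot x=\lnot x$ (antitonicity applied to $x\leq\lnot\lnot x$ gives $\lnot\lnot\lnot x\leq\lnot x$, and double-negation introduction for $\lnot x$ gives the reverse), whence $\lnot\lnot\lnot\lnot x=\lnot\lnot x$; also that $\lnot\lnot(-)$ is monotone as a composite of two antitone maps. Now instantiate $(\ast)$ with $a:=q$, $f:=p$ and use commutativity of $\wedge$ to get $\lnot\lnot p\wedge q\leq\lnot\lnot(p\wedge q)$; applying the monotone map $\lnot\lnot(-)$ yields $\lnot\lnot(\lnot\lnot p\wedge q)\leq\lnot\lnot\lnot\lnot(p\wedge q)=\lnot\lnot(p\wedge q)$. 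On the other hand, instantiate $(\ast)$ with $a:=\lnot\lnot p$, $f:=q$ to get $\lnot\lnot p\wedge\lnot\lnot q\leq\lnot\lnot(\lnot\lnot p\wedge q)$. Chaining the last two inequalities gives $\lnot\lnot p\wedge\lnot\lnot q\leq\lnot\lnot(p\wedge q)$, which is \textup{(Nu)}.

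The only real obstacle is spotting the specialization in Step 1 (one wants to kill every conjunct of \textup{(Ex)} except the one mentioning $\lnot\lnot f$ on the left and $\lnot\lnot(a\wedge f)$ on the right); once $(\ast)$ is in hand, the remainder is a routine two-line manipulation of double negations appealing only to antitonicity and double-negation introduction. I expect the write-up to be essentially the three displays above.
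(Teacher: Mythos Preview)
Your proof is correct and essentially identical to the paper's. The paper specializes \textup{(Ex)} with $c=e=1$, $b=0$ (your choice $b=c=d=0$, $e=1$ works just as well) to extract the same key inequality $a\wedge\lnot\lnot f\leq\lnot\lnot(a\wedge f)$, and then bootstraps via the same two instances $(a,f)=(\lnot\lnot p,q)$ and $(a,f)=(q,p)$ together with $\lnot\lnot\lnot\lnot=\lnot\lnot$, exactly as you do.
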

\proof
Substituting $c = e = 1$, $b = 0$, $f=q$, and  $a = \lnot\lnot p$ in \textup{(Ex)}, we have 
\[ \lnot\lnot p \wedge \lnot\lnot q \leq \lnot\lnot (\lnot\lnot p \wedge q).\]
Substituting $c = e = 1$, $b = 0$, $a = q$, and $f = p$ in \textup{(Ex)}, we have
\[\lnot\lnot p\wedge q \leq \lnot\lnot (p \wedge q).\]
Combining these two equations we have 
\[\lnot\lnot p \wedge \lnot\lnot q \leq \lnot\lnot(\lnot\lnot(p \wedge q)) \leq \lnot\lnot (p \wedge q),\]
as desired.
\endproof
The fact that \textup{(Nu)} is valid in all ortholattices and in all Heyting lattices was previously observed by Holliday \cite{Ho23}.

\begin{lemma}\label{LemmaVi}
The following formula \textup{(Vi)}
\begin{align*}
a \wedge (c \vee e) \wedge \neg \neg f \leq (a \wedge c) \vee (a \wedge e) \vee f
\end{align*}
is valid in all Ex-lattices.
\end{lemma}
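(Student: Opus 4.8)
The plan is to derive (Vi) as an essentially immediate instantiation of (Ex). The only thing standing between (Ex) and (Vi) is the extra left-hand conjunct $\lnot[a\wedge((b\wedge c)\vee(b\wedge d))]$ of (Ex) together with the ``excluded middle'' right-hand conjunct $[(b\wedge(c\vee d))\vee\lnot(b\wedge(c\vee d))]$; both mention the variable $b$, which does not occur in (Vi). The natural move is therefore to substitute $b=0$ and simplify.

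First I would record that $\lnot 0 = 1$ holds in every fundamental lattice (indeed $\lnot 1 = 1\wedge\lnot 1 = 0$ by semi-complementation, and then $\lnot 0 = \lnot\lnot 1 \geq 1$ by double-negation introduction). Substituting $b=0$ into (Ex) makes $(b\wedge c)\vee(b\wedge d)=0$, so $\lnot[a\wedge 0]=\lnot 0=1$ and the left-hand side of (Ex) collapses to $a\wedge(c\vee e)\wedge\lnot\lnot f$; likewise $b\wedge(c\vee d)=0$, so the last right-hand conjunct becomes $0\vee\lnot 0=1$. Hence the $b=0$ instance of (Ex) reads
\[
a\wedge(c\vee e)\wedge\lnot\lnot f \;\leq\; \lnot\lnot(a\wedge f)\wedge\bigl[(a\wedge c)\vee(a\wedge e)\vee f\bigr].
\]
Since a meet lies below each of its conjuncts, the right-hand side is $\leq (a\wedge c)\vee(a\wedge e)\vee f$, and transitivity of $\leq$ yields (Vi). The variable $d$ plays no role and may be set arbitrarily.

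There is no genuine obstacle here: the argument is a single substitution followed by projecting onto one conjunct of a meet. The only point requiring the slightest care is the identity $\lnot 0 = 1$, which is precisely what makes the awkward negated conjunct of (Ex) disappear under $b = 0$.
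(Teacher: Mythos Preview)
Your proof is correct and is exactly the paper's own argument: substitute $b=0$ in \textup{(Ex)} so the negated conjunct on the left and the excluded-middle conjunct on the right both collapse to $1$, then project onto the middle conjunct of the right-hand side. Your explicit verification of $\lnot 0 = 1$ is a welcome addition.
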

\proof
Substituting $b = 0$ in \textup{(Ex)}, we have
\begin{align*}
a \wedge (c \vee e) \wedge \neg \neg f 
&\leq \lnot\lnot (a \wedge f) \wedge \big( (a \wedge c) \vee (a \wedge e) \vee f \big)\\
&\leq (a \wedge c) \vee (a \wedge e) \vee f,
\end{align*}
as desired.
\endproof

\begin{lemma}\label{LemmaCl}
The following formula \textup{(Cl)}
\begin{align*}
\neg \big( a \land ((b \land c) \lor (b \land d))\big) \land a \leq (b \land (c\lor d)) \lor \neg(b\land (c \lor d))
\end{align*}
is valid in all Ex-lattices.
\end{lemma}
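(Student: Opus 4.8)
The plan is to obtain \textup{(Cl)} as a substitution instance of \textup{(Ex)} in which the three ``extra'' conjuncts $(c \vee e)$ and $\lnot\lnot f$ on the left, and the disjunction $(a \wedge c) \vee (a \wedge e) \vee f$ on the right, all collapse to the top element. Concretely, I would set $e = 1$ and $f = 1$ in \textup{(Ex)} and simplify using the elementary identities $x \vee 1 = 1$ and $x \wedge 1 = x$, valid in any bounded lattice, together with $\lnot\lnot 1 = 1$, which holds in every fundamental lattice since $1 \leq \lnot\lnot 1 \leq 1$ by double-negation introduction and the fact that $1$ is the top element.

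With these substitutions the left-hand side of \textup{(Ex)} becomes
\[
\lnot\big[a \wedge ((b\wedge c) \vee (b \wedge d))\big] \wedge a \wedge (c \vee 1) \wedge \lnot\lnot 1 \;=\; \lnot\big[a \wedge ((b\wedge c) \vee (b \wedge d))\big] \wedge a,
\]
while the right-hand side becomes
\[
\lnot\lnot(a \wedge 1) \wedge \big[(a\wedge c) \vee (a \wedge 1) \vee 1\big] \wedge \big[(b \wedge (c \vee d)) \vee \lnot(b \wedge (c \vee d))\big] \;=\; \lnot\lnot a \wedge \big[(b \wedge (c \vee d)) \vee \lnot(b \wedge (c \vee d))\big].
\]
Thus \textup{(Ex)} specializes to $\lnot[a \wedge ((b\wedge c) \vee (b \wedge d))] \wedge a \leq \lnot\lnot a \wedge [(b \wedge (c \vee d)) \vee \lnot(b \wedge (c \vee d))]$, and composing this with the conjunction-elimination inequality $\lnot\lnot a \wedge [(b \wedge (c \vee d)) \vee \lnot(b \wedge (c \vee d))] \leq (b \wedge (c \vee d)) \vee \lnot(b \wedge (c \vee d))$ yields \textup{(Cl)}. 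There is no genuine obstacle here: the only point requiring a word of care is the justification of $\lnot\lnot 1 = 1$ (and the routine constant simplifications in a bounded lattice); everything else is a single legitimate substitution into the already-established axiom \textup{(Ex)} followed by one application of transitivity.
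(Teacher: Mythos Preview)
Your proof is correct and matches the paper's own argument exactly: the paper also substitutes $e = f = 1$ in \textup{(Ex)}, obtains $\neg\big(a \land ((b \land c) \lor (b \land d))\big) \land a \leq \neg\neg a \land \big[(b \land (c\lor d)) \lor \neg(b\land (c \lor d))\big]$, and then drops the $\neg\neg a$ conjunct.
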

\proof
Substituting $ e = f = 1$ in \textup{(Ex)}, we have
\[\neg \big(a \land ((b \land c) \lor (b \land d))\big) \land a \leq \neg \neg a \land \big((b \land (c\lor d)) \lor \neg(b\land (c \lor d))\big),\]

which clearly implies \textup{(Cl)}.
\endproof

\section{The Ex-Embedding Theorem}\label{SectVEmb}
In this section, we state and prove our main technical tool. For it, we need the notion of a \textit{fundamental homomorphism}. This is simply a homomorphism between fundamental lattices, i.e., an order-preserving function $e: L \to L'$ such that $e(0) = 0$, $e(1) = 1$, and $e$ commutes with the functions $\wedge, \vee$, and $\lnot$. Observe that this definition implies that the range $e[L]$ is a fundamental sublattice of $L'$.

\begin{theorem}[The Ex-Embedding Theorem] \label{mainthm}
    Let $(L, \land, \lor, 0,1, \neg)$ be an Ex-lattice. Then there are an ortholattice $O_L$, a Heyting lattice $A_L$, and a fundamental homomorphism 
\[e: L \to O_L \times A_L.\]
\end{theorem}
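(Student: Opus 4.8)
The plan is to build the two target lattices as quotients (or subquotients) of $L$ obtained by collapsing $L$ along two complementary congruence-like relations, one forcing double-negation elimination (producing the ortholattice factor) and one forcing distributivity together with pseudo-complementation (producing the Heyting factor), and then to show that the diagonal-type map $a \mapsto ([a]_O, [a]_A)$ is injective. Concretely, for the orthologic side I would pass to the \emph{regular elements} $L_{\lnot\lnot} = \{a \in L : \lnot\lnot a = a\}$, with meet inherited from $L$, join defined as $a \sqcup b = \lnot\lnot(a \vee b)$, bounds $0,1$, and the restriction of $\lnot$; the standard argument (as for ortholattices of regular elements) shows this is an ortholattice $O_L$, and the map $r\colon L \to O_L$, $a \mapsto \lnot\lnot a$, is a fundamental homomorphism — preservation of $\lnot$ is double-negation introduction plus antitonicity, preservation of $\wedge$ is exactly \textup{(Nu)} from Lemma \ref{LemmaNu}, preservation of $\vee$ is immediate from the definition of $\sqcup$, and preservation of $0,1$ is clear. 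So the ortholattice factor comes essentially for free from \textup{(Nu)}.

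The Heyting side is the delicate one. Here I would look for a congruence $\theta$ on $L$ such that $L/\theta$ is distributive and pseudo-complemented, chosen as small as possible so that the kernel of $r$ and the kernel of the quotient map together separate points. A natural candidate is to take $A_L$ to be the lattice of sets of the form $\{x : x \leq a \vee y \text{ for suitable } y \text{ with } \lnot y \geq \dots\}$, i.e. some prime-filter or downset construction; more promisingly, given the shape of \textup{(Ex)} and the derived laws \textup{(Vi)} (Lemma \ref{LemmaVi}) and \textup{(Cl)} (Lemma \ref{LemmaCl}), one expects $A_L$ to be built from the sublattice of elements that are ``decided'' in the sense that $\lnot\lnot$ behaves classically below them, or from a Heyting envelope generated inside some completion of $L$. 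The role of \textup{(Cl)} is precisely to guarantee that the relevant instances of distributivity hold modulo the congruence (it says a certain distributivity-defect is ``complemented,'' hence trivialized in the quotient where that complement is killed), while \textup{(Vi)} supplies the other half of distributivity in the presence of a double negation. I would set up $A_L$ and the homomorphism $h\colon L \to A_L$ so that $\ker h$ consists exactly of pairs that $r$ already identifies on the regular part, which is what makes the pair $(r,h)$ injective.

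The heart of the proof, then, and the step I expect to be the main obstacle, is the \emph{separation lemma}: for $a \not\leq b$ in $L$, exhibit either an ortholattice quotient or a Heyting quotient in which the images of $a$ and $b$ are still not comparable. I would prove this by a filter/ideal argument: extend the failure $a \not\leq b$ to a suitable prime-like filter–ideal pair $(F,I)$ with $a \in F$, $b \in I$, $F \cap I = \emptyset$, and such that $F$ is closed under the operations needed to make $L \setminus I$-style quotient land in one of the two varieties; the combinatorial content of \textup{(Ex)} — with its six variables $a,b,c,d,e,f$ playing the roles of ``the element to be separated,'' ``a distributivity witness,'' ``join components,'' and ``a double-negation witness'' — is exactly what lets one carry out this extension, by a Zorn's-lemma maximality argument where \textup{(Ex)} rules out the bad configuration at a maximal stage. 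Once the separation lemma is in hand, one takes $O_L$ and $A_L$ to be (products of) the quotients it produces, or simply the two canonical quotients described above after checking they already suffice, and defines $e$ componentwise; injectivity of $e$ is then immediate, and the fact that $e$ is a fundamental homomorphism follows from the fact that each component is. I would close by remarking that $O_L$ and $A_L$ can be taken to be set-based (e.g. via the representation of ortholattices and Heyting lattices on concrete closure systems) so that $e$ embeds $L$ into a genuine product of an ortholattice and a Heyting lattice.
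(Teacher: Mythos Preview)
Your overall architecture---two homomorphisms, one collapsing to an ortholattice and one to a Heyting lattice, with a separation argument for injectivity---matches the paper's, and your construction of $O_L$ via regular elements is correct and is isomorphic to the paper's quotient $L/{\resim}$ where $a \resim b$ iff $\neg a = \neg b$ (each $\resim$-class has $\neg\neg a$ as canonical representative, and \textup{(Nu)} is exactly what makes $\resim$ a congruence). So the ortholattice half is fine.

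The Heyting half, however, is not yet a proof: you list several candidate constructions without committing to one, and your account of how \textup{(Vi)} and \textup{(Cl)} enter is off. The paper takes $A_L$ to be the Heyting sublattice of $Op(P(L))$---downward-closed sets of prime filters on $L$---generated by the sets $\widehat{a} = \{P : a \in P\}$, with $h(a) = \widehat{a}$. Distributivity and pseudo-complementation are then \emph{automatic} (they hold in any lattice of open sets), so \textup{(Cl)} is not needed to ``trivialize a distributivity defect'' as you suggest. Instead, \textup{(Cl)} is used to prove that $h$ preserves \emph{negation}: one must show $\widehat{\neg a} = {\resim}\,\widehat{a}$, i.e.\ that $\neg a \notin P$ implies some prime $Q \supseteq P$ contains $a$, and \textup{(Cl)} is precisely what lets you extend $P \cup \{a\}$ to a prime filter (\cref{neglma}). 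Meanwhile \textup{(Vi)} is used not for distributivity of the quotient but for the \emph{separation step}: when $a \nleq b$ and $\neg a = \neg b$, \textup{(Nu)} gives $c = a \wedge b$ with $c \leq a,b \leq \neg\neg c$, and then \textup{(Vi)} is the engine of the prime-filter existence lemma (\cref{pft}) that produces $P$ with $a \in P$, $b \notin P$, witnessing $\widehat{a} \nsubseteq \widehat{b}$.

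Finally, the separation dichotomy is cleaner than your ``$\ker h$ complementary to $\ker r$'' formulation: if $a \nleq b$, either $\neg a \neq \neg b$ (so $a^* \nleq b^*$ in $O_L$) or $\neg a = \neg b$ (so the prime-filter argument above gives $\widehat{a} \nleq \widehat{b}$ in $A_L$). You should make this case split explicit rather than speaking of kernels.
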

Here, the product of two fundamental lattices is defined the natural way, with 
\[(a, b) \leq (c,d) \text{ if and only if } a \leq c \text{ and } b \leq d.\]
This product can be seen as a fundamental lattice itself, setting 
\[\lnot (a, b) = (\lnot a, \lnot b).\]

The proof of \cref{mainthm} will require several lemmas. We start by constructing the ortholattice $O_L$ as a quotient of the original Ex-lattice $L$.

\begin{lemma}\label{LemmaResim}
    Let $L$ be an Ex-lattice. Then the relation $\resim \, \sset L \times L$ given by 
    \[a \resim b \leftrightarrow \neg a = \neg b\] 
    is a congruence relation.
\end{lemma}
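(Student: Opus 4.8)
The plan is to verify that $\resim$ is an equivalence relation and then that it is compatible with $\wedge$, $\vee$, and $\lnot$. Reflexivity, symmetry, and transitivity are immediate from the fact that $\resim$ is the kernel of the map $a \mapsto \lnot a$, so the work lies entirely in the congruence conditions. For $\lnot$ itself compatibility is trivial: if $\lnot a = \lnot b$ then certainly $\lnot\lnot a = \lnot\lnot b$, i.e. $\lnot a \resim \lnot b$.

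The substantive part is showing that $a_1 \resim a_2$ and $b_1 \resim b_2$ imply $a_1 \wedge b_1 \resim a_2 \wedge b_2$ and $a_1 \vee b_1 \resim a_2 \vee b_2$. For disjunction this is easy and needs only fundamentality: by the De Morgan law from \cref{RemarkDeMorgan}, $\lnot(a \vee b) = \lnot a \wedge \lnot b$, so $\lnot(a_1 \vee b_1) = \lnot a_1 \wedge \lnot b_1 = \lnot a_2 \wedge \lnot b_2 = \lnot(a_2 \vee b_2)$. The conjunction case is where the axiom \textup{(Ex)} — in the guise of its consequence \textup{(Nu)} from \cref{LemmaNu} — must be used, since $\lnot$ need not distribute over $\wedge$ in a general fundamental lattice and the inclusion $\lnot a \vee \lnot b \leq \lnot(a \wedge b)$ from \cref{RemarkDeMorgan} is in general strict. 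The idea is to show that $\lnot(a \wedge b)$ depends only on $\lnot a$ and $\lnot b$, and the natural route is to prove the identity $\lnot(a \wedge b) = \lnot(\lnot\lnot a \wedge \lnot\lnot b)$ in every Ex-lattice. One inclusion, $\lnot(\lnot\lnot a \wedge \lnot\lnot b) \leq \lnot(a \wedge b)$, follows from antitonicity since $a \wedge b \leq \lnot\lnot a \wedge \lnot\lnot b$. For the reverse inclusion, apply $\lnot$ to \textup{(Nu)} (which is antitone) to get $\lnot\lnot(a \wedge b) \leq \lnot(\lnot\lnot a \wedge \lnot\lnot b)$, and then use $\lnot(a \wedge b) \leq \lnot\lnot\lnot(a\wedge b) = \lnot(a\wedge b)$ — more directly, the fundamental validity of triple-negation collapse $\lnot\lnot\lnot x = \lnot x$ gives $\lnot(a\wedge b) = \lnot\lnot\lnot(a \wedge b) \leq \lnot(\lnot\lnot a \wedge \lnot\lnot b)$. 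Hence $\lnot(a \wedge b) = \lnot(\lnot\lnot a \wedge \lnot\lnot b)$, and since $a_1 \resim a_2$ gives $\lnot\lnot a_1 = \lnot\lnot a_2$ and likewise for $b$, we conclude $\lnot(a_1 \wedge b_1) = \lnot(\lnot\lnot a_1 \wedge \lnot\lnot b_1) = \lnot(\lnot\lnot a_2 \wedge \lnot\lnot b_2) = \lnot(a_2 \wedge b_2)$, i.e. $a_1 \wedge b_1 \resim a_2 \wedge b_2$.

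The main obstacle is precisely establishing the identity $\lnot(a \wedge b) = \lnot(\lnot\lnot a \wedge \lnot\lnot b)$: one must be careful that the only genuinely non-fundamental ingredient needed is \textup{(Nu)}, and that the manipulations with double and triple negations are all licensed in an arbitrary fundamental lattice (antitonicity, $x \leq \lnot\lnot x$, and the consequent collapse $\lnot\lnot\lnot x = \lnot x$, the latter being the fundamental fact mentioned after \cref{RemarkDeMorgan}). Once this identity is in hand, the verification that $\resim$ is a congruence is a short assembly of the three cases above, so no further difficulty is expected.
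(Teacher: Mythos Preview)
Your proposal is correct and follows essentially the same route as the paper: the equivalence and $\lnot$-compatibility are immediate, the $\vee$-case uses the De Morgan identity $\lnot(a\vee b)=\lnot a\wedge\lnot b$, and the $\wedge$-case rests on the identity $\lnot(a\wedge b)=\lnot(\lnot\lnot a\wedge\lnot\lnot b)$ derived from \textup{(Nu)} together with $\lnot\lnot\lnot x=\lnot x$. One small slip: applying $\lnot$ to \textup{(Nu)} yields $\lnot\lnot\lnot(a\wedge b)\leq\lnot(\lnot\lnot a\wedge\lnot\lnot b)$, not $\lnot\lnot(a\wedge b)\leq\cdots$ as you first wrote; your ``more directly'' clause is the correct argument and matches the paper's chain $\lnot(a\wedge b)=\lnot\lnot\lnot(a\wedge b)=\lnot(\lnot\lnot a\wedge\lnot\lnot b)$ exactly.
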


\begin{proof}
    Clearly, $\neg a = \neg b$ implies that $\neg \neg a = \neg \neg b$. Moreover, since $\neg (a \lor b) = \neg a \land \neg b$ in any fundamental lattice (see Remark \ref{RemarkDeMorgan}), we have that $a \resim a'$ and $b \resim b'$ together imply that $\neg(a \lor b) = \neg a \land \neg b = \neg a' \land \neg b' = \neg (a' \lor b')$, whence $a\lor b \resim a' \lor b'$. Finally, observe that, for any $a,b \in L$:

    \[\neg(a \land b) = \neg\neg\neg(a \land b) = \neg(\neg \neg a \land \neg \neg b) = \neg \neg (\neg a \lor \neg b),\] where the second equality holds by Lemma \ref{LemmaNu}. But it follows from this that $a \resim a'$ and $b \resim b'$ together imply $a \land b \resim a' \land b'$.
\end{proof}

Given an Ex-lattice $L$ and $\resim$ defined as in Lemma \ref{LemmaResim}, let $a^*$ denote the equivalence class of $a$. We define 
\begin{equation}\label{eqDefO_L}
O_L = L / \resim = \{a^*: a \in L\}
\end{equation}
by putting $a^* \leq b^*$ if and only if there are $a_0 \in a^*, b_0 \in b^*$ such that $a_0 \leq b_0$.

\begin{lemma}
Suppose that $L$ is an Ex-lattice. Then, $O_L$ is an ortholattice and $a \mapsto a^*$ is a fundamental homomorphism of $L$ into $O_L$. 
\end{lemma}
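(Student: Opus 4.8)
The plan is to verify the three structural facts packaged in the statement: that $\le$ is well-defined on $O_L$, that $(O_L,\wedge,\vee,0^*,1^*,\lnot)$ is an ortholattice, and that $q\colon a\mapsto a^*$ is a fundamental homomorphism. I would begin with the quotient map. Since $\resim$ is a congruence by \cref{LemmaResim}, the operations $\wedge,\vee,\lnot$ descend to $O_L$, and the constants $0^*,1^*$ are well-defined; so the only delicate point is that the order defined by ``$a^*\le b^*$ iff some representatives are comparable'' agrees with the lattice order ``$a^*\wedge b^*=a^*$''. The key observation is that $a\resim b$ is equivalent to $\lnot\lnot a=\lnot\lnot b$ \emph{and} (trivially) $\lnot a=\lnot b$; in particular $a\resim\lnot\lnot a$ for every $a$, because $\lnot\lnot\lnot a=\lnot a$ holds in every fundamental lattice. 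Using this I would show $a^*\le b^*$ (in the representative sense) implies $\lnot\lnot a\le\lnot\lnot b$, hence $a^*\wedge b^*=(a\wedge b)^*=(\lnot\lnot a\wedge\lnot\lnot b)^*$; combined with \cref{LemmaNu} (which gives $\lnot\lnot a\wedge\lnot\lnot b\resim\lnot\lnot(a\wedge b)\resim a\wedge b$ once one also uses $a\wedge b\le\lnot\lnot(a\wedge b)\le\lnot\lnot a\wedge\lnot\lnot b$), this pins down $a^*\wedge b^*=a^*$, so the two notions of order coincide and $\le$ on $O_L$ is a genuine partial order making $O_L$ a bounded lattice.

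Next I would check that $\lnot$ on $O_L$ is an orthocomplementation, i.e.\ satisfies $\lnot\lnot a^*=a^*$ in addition to the fundamental-lattice axioms, which are inherited automatically as homomorphic images of the corresponding identities/quasi-identities in $L$ (antitonicity and $a^*\wedge\lnot a^*=0^*$ pass through the quotient since $\resim$ is a congruence and $q$ is surjective). For double negation elimination: $\lnot\lnot a^*=(\lnot\lnot a)^*$, and $(\lnot\lnot a)^*=a^*$ precisely because $\lnot(\lnot\lnot a)=\lnot a=\lnot a$, i.e.\ $\lnot\lnot a\resim a$ by definition of $\resim$. So $O_L$ is an ortholattice. (One should also note $\resim$ respects $0$ and $1$: $\lnot 1=0$ and $\lnot 0=1$ are forced, so $0^*=\{a:\lnot a=1\}$ and $1^*=\{a:\lnot a=0\}$, and these are the bottom and top of $O_L$ by the order computation above.)

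Finally, that $q$ is a fundamental homomorphism is essentially immediate once the order on $O_L$ is understood: $q$ is order-preserving because $a\le b$ gives $a\in a^*$, $b\in b^*$ with $a\le b$, hence $a^*\le b^*$; $q(0)=0^*$, $q(1)=1^*$; and $q$ commutes with $\wedge,\vee,\lnot$ by the very definition of the quotient operations, which is exactly what being a congruence buys us. So $q$ has all the required properties. The main obstacle, and the only place real content is used, is the well-definedness of the order on $O_L$ and its coherence with the lattice structure — that is where \cref{LemmaNu} (hence axiom (Ex)) genuinely enters; in an arbitrary fundamental lattice the relation $\lnot a=\lnot b$ need not be compatible with $\wedge$, and without $O_L$ being a lattice at all the homomorphism claim would be vacuous. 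Everything after that is routine transport of identities along a surjective congruence quotient.
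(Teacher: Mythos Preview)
Your proof is correct and follows the same route as the paper: once \cref{LemmaResim} gives that $\resim$ is a congruence, the quotient $O_L$ is automatically a fundamental lattice with $q\colon a\mapsto a^*$ a surjective homomorphism, and $\lnot\lnot a\resim a$ (i.e.\ $\lnot\lnot\lnot a=\lnot a$) immediately yields double-negation elimination in $O_L$. Your separate verification that the representative-definition of $\leq$ agrees with the lattice order, and your second direct appeal to \cref{LemmaNu} for this, are not wrong but are redundant: agreement of the two orders is a general fact about congruence quotients of lattices, and (Nu) has already done all its work inside the proof of \cref{LemmaResim} (namely, compatibility of $\resim$ with $\wedge$), so no further use of (Ex) is needed at this stage.
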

\proof
By Lemma \ref{LemmaResim}, $\resim$ is a congruence relation, so the operations $\wedge$ and $\vee$ are well defined in $O_L$ and coincide with the lattice operations. It is easy to verify that it is a fundamental lattice using the congruence of $\resim$ relative to $\lnot$. The fact that it is an ortholattice follows immediately now, since $\neg \neg a \sim a$ for any $a \in L$.
\endproof

We have $a^*\neq b^*$ whenever $\lnot a\neq \lnot b$, but $O_L$ is unable to distinguish elements of $L$ with the same negation, by construction.
In order to embed $L$ into a product $O_L \times A_L$ as in the statement of the theorem, we need to define a Heyting lattice $A_L$ and a homomorphism $\phi: L \to A_L$ such that $\phi(a) \neq \phi(b)$ whenever $a \neq b$ and $\neg a = \neg b$. We do so now.

The following lemma refers to the axiom \textup{(Nu)} from Lemma \ref{LemmaNu}.
\begin{lemma} \label{int}
    Let $L$ be a fundamental lattice in which \textup{(Nu)} is valid, and let $a,b \in L$ be such that $\neg a = \neg b$. Then there is $c \in L$ such that $c \leq a, b \leq \neg \neg c$.
\end{lemma}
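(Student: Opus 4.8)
The plan is to take $c = a \wedge b$ and verify the two required inequalities. The inequality $c \leq a$ and $c \leq b$ are immediate from the definition of meet, so the content of the lemma is entirely in showing $a \leq \neg\neg(a \wedge b)$ and $b \leq \neg\neg(a \wedge b)$; by symmetry it suffices to prove one of them, say $a \leq \neg\neg(a \wedge b)$.

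\medskip

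The key computation goes through double negations and exploits the hypothesis $\neg a = \neg b$. First, in any fundamental lattice we have $a \leq \neg\neg a$ and $b \leq \neg\neg b$, and antitonicity together with $\neg a = \neg b$ gives $\neg\neg a = \neg\neg b$ as well. Now apply \textup{(Nu)}, which is valid in $L$ by assumption, with $p = a$ and $q = b$: this yields $\neg\neg a \wedge \neg\neg b \leq \neg\neg(a \wedge b)$. Combining, $a \leq \neg\neg a = \neg\neg a \wedge \neg\neg b \leq \neg\neg(a \wedge b)$, where the middle equality uses $\neg\neg a = \neg\neg b$ (so $\neg\neg a \wedge \neg\neg b = \neg\neg a$). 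The same argument with the roles of $a$ and $b$ exchanged — or simply noting that $a \wedge b = b \wedge a$ and reusing (Nu) symmetrically — gives $b \leq \neg\neg(a \wedge b)$. Hence $c = a \wedge b$ witnesses the claim.

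\medskip

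I do not expect any genuine obstacle here: the only subtlety is recognizing that $\neg a = \neg b$ is used solely to collapse $\neg\neg a$ and $\neg\neg b$ to a common value (so that the meet $\neg\neg a \wedge \neg\neg b$ appearing on the left of (Nu) simplifies to $\neg\neg a$, and dually to $\neg\neg b$), and that (Nu) is precisely the tool that converts this into a statement about $\neg\neg(a \wedge b)$. Everything else is the standard arithmetic of weak pseudo-complements recalled in the definition of a fundamental lattice. The choice $c = a \wedge b$ is the natural one — it is the largest element below both $a$ and $b$ — and (Nu) is exactly what is needed to push it up past the double negation of the meet.
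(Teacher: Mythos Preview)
Your proof is correct and follows essentially the same approach as the paper: both take $c = a \wedge b$ and use the chain $a \leq \neg\neg a = \neg\neg a \wedge \neg\neg b \leq \neg\neg(a \wedge b)$, invoking the hypothesis $\neg a = \neg b$ for the equality and \textup{(Nu)} for the final inequality.
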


\begin{proof}
    Suppose that $\neg a = \neg b$. Let $c = a \land b$. Then we have that 
    \[a \leq \neg \neg a = \neg\neg a \land \neg \neg b = \neg \neg (a \land b).\]
    Similarly, we have $b \leq \neg \neg (a \land b)$ and hence $c \leq a, b \leq \neg \neg c$.
\end{proof}

Below, we recall that a (proper) \textit{filter} on a lattice $L$ is a nonempty subset of $L$ closed under meets $\wedge$, upwards closed under $\leq$, and not containing $0$. Below, we do not speak of improper filters.
Dually, a (proper) \textit{ideal} on a lattice $L$ is a nonempty subset of $L$ closed under joins $\vee$, downwards closed under $\leq$, and not containing $1$. A filter $F$ is \textit{prime} if $a \vee b \in F$ implies $a \in F$ or $b \in F$. Dually, an ideal $I$ is \textit{prime} if $a\wedge b \in I$ implies $a\in I$ or $b \in I$; equivalently, if the complement of $I$ is a filter. When dealing with Boolean algebras, maximal filters (and ideals) are always prime, but this is not necessarily true in the context of non-distributive algebras.

The following lemma refers to the axiom \textup{(Vi)} from Lemma \ref{LemmaVi}
\begin{lemma} \label{pft}
    Let $L$ be a fundamental lattice in which axiom \textup{(Vi)} is valid. Suppose $F$ is a filter on $L$ and $I$ is an ideal on $L$ such that $F \cap I = \emptyset$ and, for some $a \in L$, $a \in I$ and $\neg \neg a \in F$. Then there is a prime filter $P$ on $L$ such that $F \sset P$ and $P \cap I = \emptyset$.
\end{lemma}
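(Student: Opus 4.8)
The plan is to imitate the classical Birkhoff prime-filter argument: obtain $P$ by a Zorn's lemma maximization, then prove primality by contradiction. The one non-classical ingredient is that, lacking distributivity, I will invoke axiom \textup{(Vi)} at the crucial step, using the distinguished element $a$ to play the role of the variable $f$.

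First I would let $\mathcal{F}$ be the collection of all proper filters $G$ with $F \subseteq G$ and $G \cap I = \emptyset$. This is nonempty (it contains $F$, since by hypothesis $F \cap I = \emptyset$), and the union of any chain in $\mathcal{F}$ is again a member of $\mathcal{F}$ (it is upward closed, closed under meets because the chain is directed, avoids $0$ since no member contains $0$, and is disjoint from $I$ since each member is). So Zorn's lemma yields a maximal $P \in \mathcal{F}$. By construction $F \subseteq P$ and $P \cap I = \emptyset$, and it remains only to prove that $P$ is prime.

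Suppose towards a contradiction that $x \vee y \in P$ but $x \notin P$ and $y \notin P$. Since $1 \in P$, the set $P_x = \{z \in L : z \geq p \wedge x \text{ for some } p \in P\}$ is a filter containing both $P$ and $x$, hence $P_x \supsetneq P$; maximality of $P$ then forces $P_x \notin \mathcal{F}$, so $P_x$ either contains $0$ or meets $I$, and in both cases (as $0 \in I$) there are $p_1 \in P$ and $i_1 \in I$ with $p_1 \wedge x \leq i_1$. Symmetrically there are $p_2 \in P$, $i_2 \in I$ with $p_2 \wedge y \leq i_2$. Setting $p = p_1 \wedge p_2 \in P$ and $i = i_1 \vee i_2 \in I$, we get $p \wedge x \leq i$ and $p \wedge y \leq i$. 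Now I apply \textup{(Vi)} with the substitution $a \mapsto p$, $c \mapsto x$, $e \mapsto y$, $f \mapsto a$:
\[
p \wedge (x \vee y) \wedge \neg\neg a \;\leq\; (p \wedge x) \vee (p \wedge y) \vee a \;\leq\; i \vee a \in I ,
\]
so $p \wedge (x \vee y) \wedge \neg\neg a \in I$ by downward closure of $I$. But $p \in P$, $x \vee y \in P$, and $\neg\neg a \in F \subseteq P$, so $p \wedge (x \vee y) \wedge \neg\neg a \in P$, contradicting $P \cap I = \emptyset$. Hence $P$ is prime.

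The main obstacle — and the reason the hypotheses ``$a \in I$'' and ``$\neg\neg a \in F$'' cannot be dropped — is exactly the displayed inequality: without distributivity one cannot bound $p \wedge (x \vee y)$ by $(p\wedge x)\vee(p\wedge y)$, but \textup{(Vi)} permits this at the cost of an extra disjunct, and the choice $f = a$ is precisely what makes the right-hand side remain inside $I$ while the left-hand side (after conjoining $\neg\neg a$) remains inside $P$. Everything else — the Zorn verification and the standard description of the filter generated by $P \cup \{x\}$ — is routine.
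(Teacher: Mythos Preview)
Your proof is correct and follows essentially the same approach as the paper's: a Zorn's-lemma maximization followed by the standard maximality argument to produce witnesses $p_1,p_2\in P$ with $p_1\wedge x,\; p_2\wedge y\in I$, and then a single application of \textup{(Vi)} with $f$ instantiated to the distinguished element $a$ to derive the contradiction. The only differences are cosmetic (variable names, and your explicit handling of the case where the generated filter is improper via $0\in I$).
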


\begin{proof}
    Let $F, I$ be respectively a filter and an ideal on $L$ such that $F \cap I =\emptyset$, and assume that there is $a \in L$ such that $a \in I$ and $\neg \neg a \in F$. By Zorn's Lemma, there is a filter $P$ maximal among filters on $L$ extending $F$ and disjoint from $I$. We claim that $P$ is prime. Suppose that $d \lor e \in P$ and, towards a contradiction, that neither of $d$ and $e$ belongs to $P$. As $d \not\in P$, then there is $c_1 \in P$ such that $c_1 \land d \in I$. To see this, observe that otherwise the set
    \[P' = \big\{x \in L : f \land d \leq x \text{ for some } f \in P\big\}\]
is a filter properly extending $P$ and disjoint from $I$, contradicting the maximality of $P$. Thus,  there is $c_1 \in P$ such that $c_1 \land d \in I$. Similarly, there is $c_2 \in P$ such that $c_2 \wedge e \in I$.

Let $c = c_1 \land c_2$. Since $\neg \neg a \in F$, we have that $\neg \neg a \land c \land (d \lor e) \in P$. Moreover, since $a \in I$, we have that $(c \land d) \lor (c \land e) \lor a \in I$. By \textup{(Vi)}, we have
\begin{align*}
\neg \neg a \land c \land (d \lor e) 
&\leq (c \wedge d) \vee (c\wedge e) \vee a,
\end{align*}
which contradicts the fact that $P \cap I = \emptyset$. Hence $P$ is prime.
\end{proof}

The following lemma refers to  the axiom \textup{(Cl)} from  \cref{LemmaCl}.
\begin{lemma} \label{neglma}
Let $L$ be a fundamental lattice in which \textup{(Cl)} is valid. Suppose that $P$ is a prime filter on $L$ such that $\neg a \notin P$ for some $a \in L$. Then there is a prime filter $Q \rset P$ such that $a \in Q$.
\end{lemma}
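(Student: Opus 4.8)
The plan is to realize $Q$ as a maximal filter that contains $P\cup\{a\}$ and avoids the principal ideal $I:=\{x\in L : x\leq\neg a\}$, and then to use \textup{(Cl)} to show that any such filter is prime. First, $I$ is a proper ideal: every filter contains $1$, so $\neg a\notin P$ forces $\neg a\neq 1$, i.e.\ $1\notin I$. Next, the filter $F:=\{x\in L : p\wedge a\leq x\text{ for some }p\in P\}$ generated by $P\cup\{a\}$ is disjoint from $I$: if $p\wedge a\leq\neg a$ for some $p\in P$, then $p\wedge a\leq a\wedge\neg a=0$, so the instance of \textup{(Cl)} whose four variables are instantiated, in order, as $p,a,1,1$ reads $\neg(p\wedge a)\wedge p\leq a\vee\neg a$, which simplifies to $p\leq a\vee\neg a$ because $\neg 0=1$ in every fundamental lattice; hence $a\vee\neg a\in P$, so $a\in P$ by primality of $P$ together with $\neg a\notin P$, and then $0=p\wedge a\in P$, contradicting properness of $P$. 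By Zorn's Lemma there is a filter $Q$ maximal among filters $G$ with $F\subseteq G$ and $G\cap I=\emptyset$; it satisfies $P\subseteq Q$, $a\in Q$, and $\neg a\notin Q$.

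The heart of the argument is that $Q$ is prime. Suppose, towards a contradiction, that $d\vee e\in Q$ while $d\notin Q$ and $e\notin Q$. Exactly as in the proof of \cref{pft}, maximality of $Q$ yields $c_1,c_2\in Q$ with $c_1\wedge d\leq\neg a$ and $c_2\wedge e\leq\neg a$. The decisive step is to pass to the small element
\[
t\ :=\ a\wedge c_1\wedge c_2\wedge(d\vee e)\ \in\ Q .
\]
Since $a\wedge c_1\wedge d\leq a\wedge\neg a=0$, and likewise with $e$ in place of $d$, we get $t\wedge d=t\wedge e=0$, while plainly $t\leq d\vee e$. Now instantiate the four variables of \textup{(Cl)}, in order, as $1,t,d,e$: the antecedent collapses to $\neg\bigl(1\wedge((t\wedge d)\vee(t\wedge e))\bigr)\wedge 1=\neg 0=1$, and the consequent collapses to $(t\wedge(d\vee e))\vee\neg(t\wedge(d\vee e))=t\vee\neg t$, so $t\vee\neg t=1$. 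Finally, since $t\in Q$ we have $\neg t\notin Q$ (else $0=t\wedge\neg t\in Q$), hence $\neg t\notin P$; but $1=t\vee\neg t\in P$ and $P$ is prime, so $t\in P$, and then $t\leq d\vee e$ forces $d\vee e\in P$, whence $d\in P$ or $e\in P$ by primality of $P$ — contradicting $d,e\notin Q\supseteq P$. Hence $Q$ is prime, and it is the required extension of $P$.

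I expect the main obstacle to be isolating the right instance of \textup{(Cl)}. The obvious substitutions — in particular, using our $a$ for the first variable of \textup{(Cl)} — only yield inequalities of the form $a\leq z\vee\neg z$ with $z$ already in $Q$, which are vacuous. The point is that applying \textup{(Cl)} with first variable $1$ and with $t$ a sufficiently small witness meeting neither $d$ nor $e$ turns \textup{(Cl)} into the assertion that $t$ is \emph{complemented} ($t\vee\neg t=1$); the contradiction is then extracted not from primality of $Q$, which is exactly what we are proving, but from primality of $P$, which must decide between $t$ and $\neg t$ and thereby forces the disjunction $d\vee e$ into $P$.
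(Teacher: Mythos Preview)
Your proof is correct and follows essentially the same approach as the paper: both arguments use \textup{(Cl)} with $c=d=1$ to show that $P\cup\{a\}$ extends to a suitable filter, then apply \textup{(Cl)} with first variable $1$ to obtain a complemented element and invoke primality of $P$ (not $Q$) to finish. Your version avoids the paper's preliminary case split on whether $a\vee\neg a\in P$ by working relative to the ideal $\dnset{\neg a}$ rather than extending to a maximal proper filter, and you fold $d\vee e$ into $t$ so that $t\wedge(d\vee e)=t$; these are minor streamlinings of the same idea.
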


\begin{proof}
Suppose that $P$ is a prime filter and that $\neg a \notin P$. We distinguish two cases. If $a \lor \neg a \in P$, then $a \in P$ since $P$ is prime. Otherwise, we claim that $P \cup \{a\}$ generates a filter. We must show that for no $b \in P$ do we have $a \land b \leq 0$.
Suppose otherwise that there is some such $b \in P$, so $\lnot (a \wedge b) = 1$. By \textup{(Cl)}, letting $c = d = 1$ and swapping $a$ and $b$, we have $b \leq a \vee \lnot a$ contradicting the assumption that $a \vee \lnot a \not\in P$. Thus, there is no such $b$ and indeed $P \cup \{a\}$ generates a filter.

Using Zorn's lemma, extend $P \cup \{a\}$ to a maximal filter $Q$.
We claim that $Q$ is also prime.
We verify that the complement of $Q$ is an ideal. Suppose that $c \notin Q$ and $d \notin Q$ for some $c,d \in L$; we verify that $c \vee d \not \in Q$. 

Suppose first that for all $f\in Q$ we have $f \wedge c \neq 0$. Let 
\[Q' = \big\{x\in L: f \wedge c \leq x  \text{ for some } f \in Q \big\}.\]
Observe that $Q'$ is a filter, for if $f,f'\in Q$ are such that $f \wedge c \leq x$ and $f' \wedge c\leq y$, then
\[(f\wedge f') \wedge c \leq x\wedge y.\] 
Since $c \in Q'$, this is a proper extension of $Q$, contradicting its maximality. Thus, there is $f \in Q$ with $f \wedge c = 0$. Similarly, there is $g \in Q$ with $g \wedge d = 0$. Letting $b = f \wedge g$, we have $b \in Q$ and $b \land c = b \land d = 0$. This means that \[\neg ((b \land c) \lor (b \land d)) = \neg(b \land c) \land \neg (b \land d) = \neg 0 \land \neg 0 = 1.\]
Letting $a = 1$ in \textup{(Cl)}, we have 
\[1 \leq (b \land (c \lor d)) \lor \neg(b \land (c \lor d)).\] 
Since $P$ is prime, this means that either $b \land (c \lor d) \in P$ or $\neg(b \land (c\lor d)) \in P$. In the first case, we have that $c\lor d \in P$, which again by primality implies that either $c \in P\sset Q$ or $d \in P \sset Q$, a contradiction. Hence $\neg (b \land (c \lor d)) \in P \sset Q$. Since $b \in Q$, it follows that $c \lor d \notin Q$. This completes the proof that $Q$ is prime.
\end{proof}

Below, we let $P(L)$ be the set of prime filters on $L$. The structure $(P(L), \rset)$ is thus a partial order and can be regarded as a topological space in which the open sets are the downwards-closed subsets of $P(L)$, so that
the collection $Op(P(L))$ of open subsets of $P(L)$, ordered by inclusion, forms a complete Heyting lattice by defining negation as
\begin{align*}
\resim A &= \text{int}(P(L))\setminus A\\
& =P(L) \setminus \{Q \in P(L) \mid \exists P \in A: Q \sset P\}.
\end{align*}

We let $A_L$ be the Heyting sublattice of $Op(P(L))$ generated by sets of the form $\widehat{a} = \{P \in P(L) \mid a \in P\}$ for any $a \in L$.

\begin{lemma} \label{emblma}
Let $A_L$ be as above. The map
\begin{align*}
\widehat{\cdot}: L &\to A_L\\
a &\mapsto \widehat{a}
\end{align*}
is a surjective fundamental homomorphism.
Moreover, for any $a,b \in L$, if $a \nleq b$ and $\neg a = \neg b$, then $\widehat{a} \not \subseteq \widehat{b}$ and thus $\widehat{a} \not \leq \widehat{b}$.
\end{lemma}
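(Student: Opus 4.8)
The plan is to verify the claimed properties in four stages: first that $\widehat{\cdot}$ lands in $A_L$, then that it preserves $0,1,\wedge,\vee$, then that it preserves $\lnot$, and finally the separation statement for $\resim$-equivalent elements.

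First I would record the elementary facts: $\widehat{0} = \emptyset$ and $\widehat{1} = P(L)$ since filters are proper and nonempty; $\widehat{a \wedge b} = \widehat{a} \cap \widehat{b}$ because filters are closed under $\wedge$ and upward closed; and $\widehat{a \vee b} = \widehat{a} \cup \widehat{b}$ precisely because we restricted to \emph{prime} filters. This also shows each $\widehat{a}$ is downward closed in $(P(L), \rset)$, hence open, so the map does land in $Op(P(L))$; surjectivity onto $A_L$ is then immediate since $A_L$ is by definition generated by the $\widehat{a}$'s and the map already commutes with all the operations (once $\lnot$ is checked). Antitonicity is clear. The one genuinely nontrivial algebraic identity is $\widehat{\lnot a} = \mathord{\resim}\widehat{a}$. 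One inclusion is easy: if $\lnot a \in P$ and $Q \supseteq P$ then $\lnot a \in Q$, so $a \notin Q$ (as $a \wedge \lnot a = 0$ and $Q$ is proper), hence no $Q \in \widehat{a}$ lies above $P$, i.e.\ $P \in \mathord{\resim}\widehat{a}$. For the reverse inclusion, suppose $\lnot a \notin P$; I must produce $Q \supseteq P$ with $a \in Q$, and this is exactly what Lemma~\ref{neglma} (which uses (Cl)) delivers. So $\widehat{\lnot a} \supseteq \mathord{\resim}\widehat{a}$, giving equality, and thus $\widehat{\cdot}$ is a fundamental homomorphism. I should also note the negation computed inside $A_L$ agrees with the one in $Op(P(L))$, which holds because $A_L$ is a Heyting sublattice and $\widehat{\lnot a}$ is again of the generating form.

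For the separation clause, assume $a \nleq b$ and $\lnot a = \lnot b$. I want a prime filter $P$ with $a \in P$ and $b \notin P$, which shows $\widehat{a} \not\subseteq \widehat{b}$. The idea is to start from the principal filter $F = \upset{a}$ and the principal ideal $I = \dnset{b}$. Since $a \nleq b$, we have $a \notin I$, and I claim $F \cap I = \emptyset$: if $x \geq a$ and $x \leq b$ then $a \leq b$, contradiction. Now to invoke Lemma~\ref{pft} I need an element witnessing the hypothesis, namely some element in $I$ whose double negation is in $F$. Take that element to be $b$ itself: $b \in I$ trivially, and $\lnot\lnot b = \lnot\lnot a \geq a$, so $\lnot\lnot b \in F$. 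Hence by Lemma~\ref{pft} there is a prime filter $P \supseteq F$ with $P \cap I = \emptyset$; then $a \in P$ while $b \notin P$, so $P \in \widehat{a} \setminus \widehat{b}$, as required.

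The main obstacle is the $\widehat{\lnot a} = \mathord{\resim}\widehat{a}$ identity, and more precisely the inclusion $\mathord{\resim}\widehat{a} \subseteq \widehat{\lnot a}$: without distributivity there is no reason a prime filter avoiding $a$ and all its primes-above should contain $\lnot a$, so one genuinely needs the extension lemma built on (Cl). The bookkeeping point to be careful about is that negation in $A_L$ must be the restriction of negation in $Op(P(L))$; this is where one uses that $A_L$ was defined as a \emph{Heyting} sublattice and that $\mathord{\resim}\widehat{a} = \widehat{\lnot a} \in A_L$, so the sublattice is closed under the ambient negation and no discrepancy arises. Everything else is a routine check of the filter/prime-filter manipulations already set up in Lemmas~\ref{pft} and~\ref{neglma}.
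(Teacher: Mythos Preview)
Your proof is correct and follows the paper's route: primality of filters gives $\vee$-preservation, \cref{neglma} supplies the nontrivial inclusion $\mathord{\resim}\widehat{a} \subseteq \widehat{\lnot a}$, and \cref{pft} yields the separating prime filter. The one difference worth flagging is in the separation clause: the paper first invokes \cref{int} (which relies on \textup{(Nu)}) to produce $c$ with $c \leq a,b \leq \lnot\lnot c$ and uses $c$ as the witness required by \cref{pft}, whereas you take $b$ itself as the witness, observing directly that $b \in \dnset{b}$ and $\lnot\lnot b = \lnot\lnot a \geq a$ so $\lnot\lnot b \in \upset{a}$. Your shortcut is more elementary and shows that \textup{(Nu)} is not actually needed for this lemma (only \textup{(Vi)} and \textup{(Cl)}, via \cref{pft} and \cref{neglma}); the paper's detour through \cref{int} is harmless but unnecessary here.
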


\begin{proof}
    Clearly, $\widehat{\cdot}$ preserves $\wedge$ and $\vee$, since every $P \in P(L)$ is a prime filter. We claim that, for any $a \in L$, $\widehat{\neg a} = \resim \widehat{a}$. This amounts to showing that, for any $P \in P(L)$, $\neg a \in P$ if and only if $\forall Q \rset P: a\notin Q$. But the left-to-right direction is clear because any $Q \rset P$ is a filter, and the right-to-left direction follows from \cref{neglma}. Hence $\widehat{\cdot}$ is a fundamental homomorphism, which is surjective since $A_L$ is the sublattice of $Op(P(L))$ generated by sets of the form $\widehat{a}$ for some $a \in L$. Finally, suppose that $a \nleq b$, but $\neg a = \neg b$. By \cref{int}, there is $c \in L$ such that $c \leq a,b \leq \neg \neg c$. But this means that the filter $\upset{a}$ and the ideal $\dnset{b}$ satisfy the conditions of \cref{pft}. Hence there is $P \in P(L)$ such that $a \in P$ and $b \notin P$, which means that $\widehat{a} \nsubseteq \widehat{b}$.
\end{proof}

We can now complete the proof of \cref{mainthm}. 

\begin{proof}[Proof of the Ex-Embedding Theorem]
Given an Ex-lattice $L$, we let $O_L$ be the quotient ortholattice defined in \eqref{eqDefO_L} and we let $A_L$ the Heyting lattice just defined. We define 
\begin{align*}
e: L &\mapsto O_L\times A_L\\
a &\mapsto (a^*,\widehat{a}),
\end{align*}
where $a^*$ is as in the definition of $O_L$ and $\widehat{a}$ is as in Lemma \ref{emblma}.
Since both maps $a \mapsto a^*$ and $a \mapsto \widehat{a}$ are fundamental lattice homomorphisms, so is $e$. Moreover, $e$ is an embedding. Indeed, if $a \nleq b$ for some $a, b \in L$, then we distinguish two cases. If $\neg a \neq \neg b$, then $a^* \nleq b^*$ by definition of the quotient. If $\neg a = \neg b$, then $\widehat{a} \not\leq \widehat{b}$ by Lemma \ref{emblma}. Either way, it follows that $e(a) \nleq e(b)$, which concludes the proof.
\end{proof}

\section{Ex-Logic}\label{SectExLogic}
As an application of Theorem \ref{mainthm}, we obtain the characterization of the joint truths of orthologic and intuitionism. We show that they coincide precisely with fundamental logic augmented with the axiom \textup{(Ex)}, i.e., with \textit{Ex-logic}.

\begin{theorem}\label{TheoremOI}
Let $\varphi$, $\psi$ be formulas.
The following are equivalent:
\begin{enumerate}
\item $\varphi\vdash \psi$ is valid in all ortholattices and in all Heyting lattices.
\item $\varphi\vdash\psi$ is valid in all Ex-lattices.
\end{enumerate}
Thereore, the intersection of orthologic and the implication-free fragment of intuitionistic logic is precisely fundamental logic augmented with the axiom \textup{(Ex)}.
\end{theorem}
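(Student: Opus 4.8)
The plan is to establish the two implications separately. The implication $(2)\Rightarrow(1)$ is immediate: as shown at the beginning of \S\ref{SectExL}, every ortholattice and every Heyting lattice is an Ex-lattice, so any entailment valid in all Ex-lattices is in particular valid in all ortholattices and in all Heyting lattices. The content of the theorem is the converse $(1)\Rightarrow(2)$, and for this we invoke the Ex-Embedding Theorem (Theorem \ref{mainthm}).

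For $(1)\Rightarrow(2)$, suppose $\varphi\vdash\psi$ is valid in all ortholattices and all Heyting lattices, and let $L$ be an Ex-lattice with a valuation $v$ into $L$; we must show $v(\varphi)\leq v(\psi)$. Applying Theorem \ref{mainthm} we obtain an ortholattice $O_L$, a Heyting lattice $A_L$, and a fundamental homomorphism $e\colon L\to O_L\times A_L$ which, as established in the proof of that theorem, is injective and order-reflecting (an order-embedding). Two routine observations then finish the argument. First, since $e$ preserves $0,1$ and commutes with $\wedge,\vee,\lnot$, an induction on formula complexity gives $e(v(\chi))=(e\circ v)(\chi)$ for every formula $\chi$, where $e\circ v$ is the composite valuation into $O_L\times A_L$. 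Second, $\varphi\vdash\psi$ is valid in $O_L\times A_L$: for any valuation $w$ into the product, writing $w(\chi)=(w_1(\chi),w_2(\chi))$ with $w_i$ the induced valuation into the $i$-th factor (again by induction, since all operations on the product, including $\lnot(a,b)=(\lnot a,\lnot b)$, are computed coordinatewise), we have $w_1(\varphi)\leq w_1(\psi)$ because $O_L$ is an ortholattice and $w_2(\varphi)\leq w_2(\psi)$ because $A_L$ is a Heyting lattice, whence $w(\varphi)\leq w(\psi)$ by the coordinatewise definition of $\leq$. Combining the two observations, $e(v(\varphi))=(e\circ v)(\varphi)\leq(e\circ v)(\psi)=e(v(\psi))$, and since $e$ is order-reflecting, $v(\varphi)\leq v(\psi)$, as required.

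For the final sentence, recall that orthologic is the logic of ortholattices and the implication-free fragment of intuitionistic logic is the logic of Heyting lattices, in the sense of Proposition \ref{PropositionFLComplete}; hence their intersection consists exactly of the entailments valid in all ortholattices and in all Heyting lattices. By the Lindenbaum--Tarski construction underlying Proposition \ref{PropositionFLComplete}, now applied to fundamental logic together with the axiom \textup{(Ex)}, Ex-logic is complete with respect to the class of Ex-lattices: the Lindenbaum--Tarski algebra of Ex-logic is a fundamental lattice in which \textup{(Ex)} holds---hence an Ex-lattice---and in it $\varphi\vdash\psi$ holds if and only if $\varphi\vdash\psi$ is derivable in Ex-logic. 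Thus the entailments valid in all Ex-lattices are precisely those of Ex-logic, and by the equivalence $(1)\Leftrightarrow(2)$ these coincide with the intersection of orthologic and the implication-free fragment of intuitionistic logic.

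The main obstacle to Theorem \ref{TheoremOI} has already been overcome, namely the Ex-Embedding Theorem itself; granting it, the remaining steps are essentially bookkeeping. The only points requiring care are the verification that a fundamental homomorphism interacts correctly with valuations and with products (two straightforward inductions on formulas, using that $\lnot$ is handled correctly in the quotient $O_L$ and in the product), and the observation that adjoining a single axiom to fundamental logic leaves the algebraic completeness argument of Proposition \ref{PropositionFLComplete} intact.
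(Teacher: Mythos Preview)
Your proof is correct and follows essentially the same approach as the paper's: both directions rely on the fact that ortholattices and Heyting lattices are Ex-lattices for one inclusion, and on the Ex-Embedding Theorem for the other, passing through the product $O_L\times A_L$ and the coordinatewise decomposition of valuations. The only cosmetic difference is that the paper argues the hard direction by contraposition (start with an Ex-lattice falsifying $\varphi\vdash\psi$ and produce a failure in one of the factors), whereas you argue it directly using that $e$ is order-reflecting; these are equivalent presentations of the same idea.
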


\begin{proof}
    Suppose that $\phi \vdash \psi$ is derivable in fundamental logic + \textup{(Ex)}. Then for any Ex-lattice $L$ and any valuation $v$ on $L$, $v(\phi) \leq v(\psi)$. In particular, this is true whenever $L$ is an ortholattice. Hence $\phi \vdash \psi$ is derivable in orthologic. Similarly, this is true whenever $L$ is a Heyting lattice, which means that $\phi \vdash \psi$ is derivable in intuitionistic logic. This shows that fundamental logic + \textup{(Ex)} is contained in the intersection of orthologic and intuitionistic logic. 
    
    For the converse inclusion, suppose that $\phi \vdash \psi$ is not derivable in fundamental logic + \textup{(Ex)}. Then there is an Ex-lattice $L$ and a valuation $c$ on $L$ such that $v(\phi) \nleq v(\psi)$. By the Ex-Embedding Theorem, there is an ortholattice $O_L$, a Heyting lattice $A_L$ and a fundamental embedding 
   \[e: L \to O_L \times A_L.\] Let $v^*$ be the valuation on $O_L \times A_L$ induced by letting $v^*(p) = e(v(p))$ for any propositional variable $p$. Clearly, $v^*$ induces two valuations $v^*_O$ and $v^*_A$ on $O_L$ and $A_L$ respectively, obtained by projecting onto the corresponding coordinate. Moreover, 
   \[(v^*_O(\phi),v^*_A(\phi)) = e(v(\phi)) \nleq e(v(\psi)) =(v^*_O(\psi),v^*_A(\psi)).\] This means that either $v^*_O(\phi) \nleq v^*_O(\psi)$ or $v^*_A(\phi) \nleq v^*_A(\psi)$. Hence either $\phi \vdash \psi$ is not provable in orthologic, or it is not provable in intuitionistic logic. This completes the proof.
\end{proof}

\begin{corollary}
    The validity problem for Ex-logic is in $\mathsf{co{-}NP}$.
\end{corollary}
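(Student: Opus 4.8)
The plan is to reduce the question of whether $\varphi \vdash \psi$ is valid in Ex-logic to a finite search certifying failure, so that refutation is checkable in polynomial time. By Theorem \ref{TheoremOI}, $\varphi \vdash \psi$ is valid in Ex-logic if and only if it is valid both in all ortholattices and in all Heyting lattices; equivalently, it \emph{fails} in Ex-logic if and only if it fails in some ortholattice \emph{or} it fails in some Heyting lattice. So a $\mathsf{co\text{-}NP}$ algorithm only needs, on input $\varphi \vdash \psi$, to guess which of the two cases obtains and then guess a witnessing counter-model of polynomially bounded size.

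First I would recall the finite model property with a polynomial size bound in each of the two component logics. For orthologic this is classical: if $\varphi \vdash \psi$ fails in some ortholattice, it fails in a finite ortholattice whose size is bounded by a polynomial in $|\varphi| + |\psi|$ (one can take the ortholattice of subspaces generated by the subformula values inside a free ortholattice, or invoke the polynomial-size countermodel property underlying the $\mathsf{P}$-time decidability result of Goldblatt \cite{Go74} / Nishimura \cite{Ni94}). For the implication-free fragment of intuitionistic logic the analogous statement holds via the polynomial-size Kripke (equivalently, finite distributive lattice) countermodel property: a failure of $\varphi \vdash \psi$ is witnessed on a finite Heyting lattice, indeed on the Lindenbaum–Tarski–style lattice generated by the subformula values, whose cardinality is polynomially bounded in the formula size. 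In both cases the number of distinct subformulas of $\varphi$ and $\psi$ is linear in the input, and the generated finite sublattice has size at most exponential in general but can be taken polynomial using the standard filtration/subformula-closure arguments specific to these two decidable logics.

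The $\mathsf{co\text{-}NP}$ procedure is then: to refute $\varphi \vdash \psi$, nondeterministically guess either (a) a finite ortholattice $O$ of polynomial size together with a valuation $v$ of the variables of $\varphi, \psi$ into $O$ such that $v(\varphi) \nleq v(\psi)$, or (b) a finite Heyting lattice $A$ of polynomial size together with a valuation $w$ such that $w(\varphi) \nleq w(\psi)$. Verifying that the guessed structure is genuinely an ortholattice (resp.\ Heyting lattice) — checking the lattice axioms, boundedness, antitonicity and the relevant law on $\neg$ (double-negation elimination, resp.\ distributivity and the pseudo-complement law) — is a polynomial-time check over the guessed multiplication tables, and evaluating $v(\varphi), v(\psi)$ is linear-time in the formula size once the tables are fixed. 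Soundness and completeness of this guessing are exactly Theorem \ref{TheoremOI} combined with the finite model properties. Hence the complement of Ex-logic validity is in $\mathsf{NP}$, so Ex-logic validity is in $\mathsf{co\text{-}NP}$.

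The main obstacle is pinning down the \emph{polynomial} size bound on the countermodels, rather than merely a finite one. For orthologic this is where one must lean on the structure theory behind the $\mathsf{P}$-time decidability proof (the countermodels produced there are small), and for the intuitionistic fragment one must be careful that restricting to subformula-generated Heyting lattices — or equivalently to Kripke frames whose points are the relevant prime filters — keeps the size polynomial; a crude ``free algebra on $n$ generators'' bound would only be elementary, not polynomial, so the argument genuinely uses the known fine-grained complexity of the two ingredient logics. Once those two polynomial bounds are in hand, the rest is a routine assembly.
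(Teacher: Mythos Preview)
Your approach is essentially the same as the paper's: use Theorem~\ref{TheoremOI} to reduce Ex-validity to the conjunction of orthologic validity and implication-free intuitionistic validity, and then appeal to the known complexity of each component. The paper's proof is a two-line citation: orthologic validity is in $\mathsf{P}$ (Guilloud--Kun\v{c}ak \cite{GK24}, ultimately Goldblatt/Nishimura) and validity over Heyting lattices (bounded distributive lattices with pseudo-complement) is in $\mathsf{co\text{-}NP}$ by Shkatov--Van Alten \cite{ShVA21}; the intersection of a $\mathsf{P}$ language and a $\mathsf{co\text{-}NP}$ language is in $\mathsf{co\text{-}NP}$.

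Where you differ is in trying to \emph{rederive} the polynomial countermodel bound on the intuitionistic side rather than citing it, and this is exactly the gap you yourself flag at the end. Your filtration/subformula-closure sketch is not enough: for full intuitionistic logic such arguments only give exponential-size models (validity there is $\mathsf{PSPACE}$-complete), and you have not isolated what is special about the implication-free fragment that brings the bound down to polynomial. The result you need is precisely the Shkatov--Van Alten theorem the paper invokes; once you cite it, your guess-and-check description becomes a correct $\mathsf{NP}$ refutation procedure, and in fact collapses to the paper's argument. So your proposal is on the right track and structurally identical to the paper's proof, but as written it leaves open the one nontrivial ingredient that the paper handles by reference to the literature.
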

\proof
This follows from Theorem \ref{TheoremOI}, together with the well-known facts that the validity problem for orthologic is in $\mathsf{P}$ (see e.g., Guilloud and Kun\v{c}ak \cite{GK24}) and the validity problem for Heyting lattices is in $\mathsf{co{-}NP}$ (see Shkatov and Van Alten \cite{ShVA21}).
\endproof 

\begin{theorem}
Over fundamental logic, axiom \textup{(Ex)} is equivalent to the conjunction of \textup{(Nu)}, \textup{(Vi)}, and \textup{(Cl)}. Therefore, \textup{(Nu)}, \textup{(Vi)}, and \textup{(Cl)} provide an alternate axiomatization of the intersection of orthologic and the implication-free fragment of intuitionistic logic. Moreover, this axiomatization is non-redundant: no two of the three axioms imply the other.
\end{theorem}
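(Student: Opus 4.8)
The statement has three parts: that over fundamental logic \textup{(Ex)} is interderivable with the conjunction of \textup{(Nu)}, \textup{(Vi)}, and \textup{(Cl)}; that \textup{(Nu)}, \textup{(Vi)}, \textup{(Cl)} therefore also axiomatize the intersection of orthologic and the implication-free fragment of intuitionistic logic; and that this triple of axioms is non-redundant. One half of the first part is already in hand: Lemmas \ref{LemmaNu}, \ref{LemmaVi}, and \ref{LemmaCl} derive \textup{(Nu)}, \textup{(Vi)}, and \textup{(Cl)} from \textup{(Ex)} over fundamental logic. For the converse, the plan is to work from the antecedent $A := \lnot[a\wedge((b\wedge c)\vee(b\wedge d))]\wedge a\wedge(c\vee e)\wedge\lnot\lnot f$ of \textup{(Ex)} and show that $A$ entails each of the three conjuncts of its consequent separately, then to combine the three entailments via the rule ``$\phi\vdash\psi$ and $\phi\vdash\chi$ imply $\phi\vdash\psi\wedge\chi$''. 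For the conjunct $\lnot\lnot(a\wedge f)$ one projects $A\vdash a$ and $A\vdash\lnot\lnot f$, applies double-negation introduction $a\vdash\lnot\lnot a$ and transitivity to get $A\vdash\lnot\lnot a\wedge\lnot\lnot f$, and then applies \textup{(Nu)} with $p := a$, $q := f$. For the conjunct $(a\wedge c)\vee(a\wedge e)\vee f$ one projects $A\vdash a\wedge(c\vee e)\wedge\lnot\lnot f$, which is literally the antecedent of \textup{(Vi)}. For the conjunct $(b\wedge(c\vee d))\vee\lnot(b\wedge(c\vee d))$ one projects $A\vdash\lnot[a\wedge((b\wedge c)\vee(b\wedge d))]\wedge a$, which is literally the antecedent of \textup{(Cl)}. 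Every step uses only the projection axioms, transitivity, and the conjunction rule, so this direction is entirely routine.

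The second part is then immediate: by the first part, fundamental logic with \textup{(Ex)} and fundamental logic with \textup{(Nu)}, \textup{(Vi)}, \textup{(Cl)} are the same logic, and by Theorem \ref{TheoremOI} this logic is the intersection of orthologic and the implication-free fragment of intuitionistic logic.

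For non-redundancy the plan is to exhibit, for each axiom $X \in \{\textup{(Nu)}, \textup{(Vi)}, \textup{(Cl)}\}$, a finite fundamental lattice validating the other two axioms but refuting $X$; since the lattices are finite and the claims are equational, each verification is a finite computation (displayable as a Hasse diagram plus a table for the weak pseudo-complement). For $X = \textup{(Vi)}$ one may take the pentagon with underlying order $0 < p < q < 1$, $0 < r < 1$ (with $r$ incomparable to $p$ and to $q$) and weak pseudo-complement $\lnot p = \lnot q = r$, $\lnot r = q$, $\lnot 0 = 1$, $\lnot 1 = 0$: here $x \vee \lnot x = 1$ for every $x$, so \textup{(Cl)} holds with right-hand side $1$, and \textup{(Nu)} is checked directly (the only non-regular element is $p$, with $\lnot\lnot p = q$); but \textup{(Vi)} fails at $a := q$, $c := p$, $e := r$, $f := p$, since $q\wedge(p\vee r)\wedge\lnot\lnot p = q$ while $(q\wedge p)\vee(q\wedge r)\vee p = p$ and $q \not\leq p$.

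The lattices for $X = \textup{(Nu)}$ and $X = \textup{(Cl)}$ require more care, and constructing these two is the main obstacle. The difficulty is that the three axioms interact: the obvious way to falsify \textup{(Nu)} is to take two $\lnot\lnot$-dense elements whose meet is $0$, but this also falsifies \textup{(Cl)}; and the obvious way to falsify \textup{(Cl)} is to take an element of the form $b\wedge(c\vee d)$ that is not complemented by its negation and with $(b\wedge c)\vee(b\wedge d) = 0$, but the negation this forces then falsifies \textup{(Nu)}. So these two witnesses cannot be minimal; instead one takes a lattice whose $\lnot\lnot$-regular part is a non-Boolean ortholattice possessing a proper three-element chain (such as $M_2 \times \mathbf{2}$) and adjoins a carefully placed non-regular element, arranging that $\lnot\lnot$ fails to preserve a binary meet while \textup{(Vi)} and \textup{(Cl)} survive (respectively, that \textup{(Cl)} fails while \textup{(Nu)} and \textup{(Vi)} survive). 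Given Lemmas \ref{LemmaNu}--\ref{LemmaCl} and Theorem \ref{TheoremOI}, the first two parts are routine, and only this last step calls for genuine work.
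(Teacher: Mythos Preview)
Your derivation of \textup{(Ex)} from \textup{(Nu)}, \textup{(Vi)}, \textup{(Cl)} is correct and takes a genuinely different route from the paper. The paper argues \emph{indirectly}: it observes that the proof of the Ex-Embedding Theorem (\cref{mainthm}) never invokes \textup{(Ex)} itself but only \textup{(Nu)}, \textup{(Vi)}, \textup{(Cl)}, so any fundamental lattice satisfying those three already embeds into a product of an ortholattice and a Heyting lattice and hence validates \textup{(Ex)}. You instead give a \emph{direct syntactic} derivation, projecting the antecedent of \textup{(Ex)} onto the hypotheses of each of the three axioms. Your argument is more elementary and self-contained (it does not lean on \cref{mainthm}); the paper's has the virtue that the equivalence falls out of work already done. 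Your pentagon refuting \textup{(Vi)} is exactly the paper's example under the relabeling $p\leftrightarrow c$, $q\leftrightarrow b$, $r\leftrightarrow a$.

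For the two counterexamples you leave open, your heuristic is in the right spirit but the concrete suggestion is off: $M_2\times\mathbf{2}$ is the eight-element Boolean algebra, so it cannot serve as a ``non-Boolean ortholattice.'' In the paper's witness refuting \textup{(Nu)}, the $\lnot\lnot$-regular skeleton is the benzene ring $O_6$ (two incomparable $2$-chains between $0$ and $1$, negation swapping each element with the far end of the other chain), with \emph{two} non-regular elements adjoined. In the witness refuting \textup{(Cl)}, the regular skeleton is the height-$2$ ortholattice $MO_2$ with four atoms---which has \emph{no} proper $3$-chain---and a single non-regular element is inserted as a new coatom above all four atoms. So ``non-Boolean ortholattice plus carefully placed non-regular element(s)'' is accurate, but neither the three-chain requirement nor the $M_2\times\mathbf{2}$ template matches what actually works; you correctly identify this as the step requiring genuine work, and indeed the proposal is incomplete until these two lattices are exhibited and verified.
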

\proof 
To see that \textup{(Nu)}, \textup{(Vi)}, and \textup{(Cl)} are together equivalent to \textup{(Ex)} it suffices to see that they provide an alternate axiomatization of the intersection of orthologic and the implication-free fragment of intuitionistic logic.
To see that this is the case, observe that \textup{(Ex)} was not used directly in the proof of Theorem \ref{mainthm}; instead we only made use of the formulas \textup{(Nu)}, \textup{(Vi)}, and \textup{(Cl)}. Conversely, these formulas are all consequences of \textup{(Ex)}, by Lemma \ref{LemmaNu}, Lemma \ref{LemmaVi}, and Lemma \ref{LemmaCl}.

We now show that these formulas do not yield a redundant axiomatization by exhibiting fundamental lattices on which exactly two out of the three axioms \textup{(Nu)}, \textup{(Vi)} and \textup{(Cl)} are valid. In all the Hasse diagrams below, the fundamental negation is represented by a red dotted arrow. For the sake of clarity, the arrow from $0 \to 1$ and $1 \to 0$ is always omitted.

\begin{lemma}
Over fundamental logic, the formulas \textup{(Nu)} and \textup{(Vi)} together do not imply \textup{(Cl)}. 
\end{lemma}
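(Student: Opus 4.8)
The plan is to construct an explicit finite fundamental lattice on which \textup{(Nu)} and \textup{(Vi)} hold but \textup{(Cl)} fails. Since \textup{(Cl)} is the only one of the three axioms that can fail in a \emph{distributive} lattice — indeed, in a distributive setting \textup{(Cl)} is equivalent to the statement that \(\neg(b \wedge (c \vee d))\) has a complement, which is an intuitionistic-style principle that need not hold — a natural first move is to look among distributive fundamental lattices, where \textup{(Vi)} is automatic from distributivity (compare the proof of Lemma \ref{LemmaVi}, where only \(b = 0\) is needed, but in a distributive lattice the full inequality \(a \wedge (c \vee e) \leq (a \wedge c) \vee (a \wedge e)\) holds outright) and \textup{(Nu)} will be arranged by making the negation well-behaved on doubly-negated elements. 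The smallest candidates are the Heyting-lattice-like diamonds: one should take a bounded distributive lattice with a small non-Boolean ``interior'' structure, equip it with a weak pseudo-complement \(\neg\) that is \emph{not} a genuine pseudo-complement, and check the three properties by hand against the Hasse diagram (with the red dotted negation arrows, following the paper's convention).

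Concretely, I would try the five-element lattice \(\{0, x, y, z, 1\}\) with \(x < y < 1\), \(x < z < 1\), \(0 < x\) (so it is the distributive lattice \(N_5\)'s distributive cousin: actually the lattice \(\mathbf{2} \times \mathbf{3}\) trimmed, or simply a ``kite''), or even more economically a six-element distributive lattice, and define \(\neg\) so that \(\neg 0 = 1\), \(\neg 1 = 0\), and \(\neg\) sends every nonzero non-top element to \(0\). This \(\neg\) is trivially antitone, satisfies \(a \wedge \neg a = 0\) (since \(\neg a \in \{0,1\}\) and if \(\neg a = 1\) then \(a = 0\)), and satisfies \(a \leq \neg\neg a\) (since \(\neg\neg a = 1\) for every \(a \neq 0\)); so it is a fundamental lattice. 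Then \textup{(Nu)} reads \(\neg\neg p \wedge \neg\neg q \leq \neg\neg(p \wedge q)\): the left side is \(1\) only when \(p, q\) are both nonzero, and I must choose the lattice so that \(p, q\) nonzero forces \(p \wedge q\) nonzero — i.e., the lattice minus \(\{0\}\) is closed under meets, which holds iff the lattice has a unique atom. With a unique atom, \textup{(Nu)} holds, and \textup{(Vi)} holds by distributivity. Finally, to make \textup{(Cl)} fail I need nonzero \(a, b, c, d\) with \(a \wedge ((b \wedge c) \vee (b \wedge d)) = 0\) (so that the hypothesis \(\neg[\cdots] \wedge a = 1 \wedge a = a\) is nontrivially large) while \((b \wedge (c \vee d)) \vee \neg(b \wedge (c \vee d)) \neq 1\), which with this negation means \(b \wedge (c \vee d)\) is a nonzero non-top element \(w\) with \(w \vee 0 = w \neq 1\); and I need \(a \not\leq w\). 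So the requirement is: a distributive lattice with a unique atom containing elements \(a, b, c, d\) where \(a \wedge b \wedge (c \vee d)\) is a proper nonzero element not above \(a\) — easy to engineer, e.g. in \(\mathbf{3} \times \mathbf{2}\) or a suitable eight-element lattice, taking \(a\) and \(b \wedge (c \vee d)\) incomparable.

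The main obstacle is finding the \emph{smallest} such example and verifying all the conditions simultaneously without error: the unique-atom condition (for \textup{(Nu)}), distributivity (for \textup{(Vi)}), and the existence of a witness configuration for the failure of \textup{(Cl)} pull in slightly different directions, so some casework over the finitely many elements is unavoidable. I expect the cleanest presentation is to display the Hasse diagram, tabulate \(\neg\) on all elements, state the witnessing substitution for the failure of \textup{(Cl)} explicitly, and then note that \textup{(Nu)} and \textup{(Vi)} follow from the unique-atom and distributivity observations respectively, each dispatched in one line. A residual subtlety is confirming that the chosen \(\neg\) really is a weak pseudo-complement on the specific lattice (antitonicity is the only nontrivial check, and it holds vacuously here since \(\neg\) is constant on the nonzero non-top part), after which the lemma is complete.
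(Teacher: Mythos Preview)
Your approach has a genuine gap that makes it unworkable as stated. With your chosen negation (every nonzero non-top element is sent to $0$), you correctly observe that \textup{(Nu)} forces the lattice to have a \emph{unique atom}: any two nonzero elements must have nonzero meet. But now look at what \textup{(Cl)} failure requires under this same negation in a distributive lattice. Since distributivity gives $(b\wedge c)\vee(b\wedge d)=b\wedge(c\vee d)=:w$, the axiom \textup{(Cl)} collapses to $\neg(a\wedge w)\wedge a\leq w\vee\neg w$. For the left side to be nonzero you need $\neg(a\wedge w)=1$, i.e.\ $a\wedge w=0$; for the right side to be strictly below $1$ you need $w$ nonzero non-top, so that $w\vee\neg w=w$. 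Thus a failure of \textup{(Cl)} demands nonzero $a$ and nonzero $w$ with $a\wedge w=0$ --- two disjoint nonzero elements, directly contradicting the unique-atom condition you needed for \textup{(Nu)}. (Your own summary sentence already shows the tension: you ask first for $a\wedge\big((b\wedge c)\vee(b\wedge d)\big)=0$ and then, a few lines later, for ``$a\wedge b\wedge(c\vee d)$ a proper nonzero element'' --- in a distributive lattice these are the same quantity.)

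The paper's counterexample avoids this trap by abandoning distributivity altogether: it uses a height-$3$ lattice with four pairwise-incomparable coatoms $b,c,d,e$ under a single element $a$ (so $b\vee c=c\vee d=a$ but $b\wedge c=b\wedge d=0$), with $\neg$ swapping $b\leftrightarrow c$ and $d\leftrightarrow e$ and sending $a\mapsto 0$. Here $(b\wedge c)\vee(b\wedge d)=0$ while $b\wedge(c\vee d)=b\wedge a=b$, so the two inner expressions of \textup{(Cl)} genuinely differ, and one gets $\neg 0\wedge 1=1\not\leq a=b\vee\neg b$. The verification of \textup{(Vi)} then has real content (it is not free from distributivity) and must be checked directly, but it goes through. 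If you want to repair your argument, you will need either a non-distributive lattice, or a distributive one with a more delicate weak pseudo-complement than the trivial one you proposed.
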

\proof
Consider the following example:
\[\begin{tikzcd}
	&& 1 \\
	&& a \\
	\\
	b & c && d & e \\
	\\
	&& 0
	\arrow[no head, from=2-3, to=1-3]
	\arrow[color={rgb,255:red,214;green,92;blue,92}, dashed, from=2-3, to=6-3]
	\arrow[no head, from=4-1, to=2-3]
	\arrow[color={rgb,255:red,214;green,92;blue,92}, dashed, tail reversed, from=4-1, to=4-2]
	\arrow[no head, from=4-2, to=2-3]
	\arrow[no head, from=4-4, to=2-3]
	\arrow[color={rgb,255:red,214;green,92;blue,92}, dashed, tail reversed, from=4-4, to=4-5]
	\arrow[no head, from=4-5, to=2-3]
	\arrow[no head, from=6-3, to=4-1]
	\arrow[no head, from=6-3, to=4-2]
	\arrow[no head, from=6-3, to=4-4]
	\arrow[no head, from=6-3, to=4-5]
\end{tikzcd}\]
We leave it to the reader to check that \textup{(Nu)} and \textup{(Vi)} are valid on this lattice. To see that \textup{(Cl)} is not valid, notice that we have 
\[\lnot ((b \wedge c) \vee (b \wedge d)) = 1\not\leq a = (b \wedge (c \vee d)) \vee \lnot (b\wedge (c\vee d)),\]
as desired.
\endproof

\begin{lemma}
Over fundamental logic, the formulas \textup{(Nu)}, and \textup{(Cl)} together do not imply \textup{(Vi)}.
\end{lemma}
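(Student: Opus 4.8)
The plan is to exhibit a five-element fundamental lattice on which \textup{(Nu)} and \textup{(Cl)} hold but \textup{(Vi)} fails. Note first that any \emph{distributive} fundamental lattice satisfies \textup{(Vi)}, since there $a\wedge(c\vee e)\wedge\lnot\lnot f=\big((a\wedge c)\vee(a\wedge e)\big)\wedge\lnot\lnot f\leq(a\wedge c)\vee(a\wedge e)\vee f$; hence any witness must be non-distributive, and the smallest such lattice is the pentagon. Concretely, I would take $N_5$ with universe $\{0,u,v,w,1\}$ and order $0<u<v<1$, $0<w<1$, with $w$ incomparable to $u$ and to $v$, equipped with the weak pseudo-complement $\lnot 0=1$, $\lnot u=\lnot v=w$, $\lnot w=v$, $\lnot 1=0$, depicted as follows (the fundamental negation in red, the clauses $\lnot 0=1$ and $\lnot 1=0$ omitted as usual):
\[\begin{tikzcd}
	&& 1 \\
	& v \\
	& u && w \\
	&& 0
	\arrow[no head, from=2-2, to=1-3]
	\arrow[no head, from=3-2, to=2-2]
	\arrow[no head, from=3-4, to=1-3]
	\arrow[no head, from=4-3, to=3-2]
	\arrow[no head, from=4-3, to=3-4]
	\arrow[color={rgb,255:red,214;green,92;blue,92}, dashed, from=3-2, to=3-4]
	\arrow[color={rgb,255:red,214;green,92;blue,92}, dashed, tail reversed, from=2-2, to=3-4]
\end{tikzcd}\]

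First I would verify that this is indeed a fundamental lattice: antitonicity of $\lnot$, semi-complementation ($u\wedge w=v\wedge w=w\wedge v=0$), and double-negation introduction ($\lnot\lnot u=v\geq u$, $\lnot\lnot v=v$, $\lnot\lnot w=w$) are all immediate finite checks. The decisive feature of this lattice is that $x\vee\lnot x=1$ for every element $x$ (since $u\vee w=v\vee w=1$ in the pentagon), i.e., excluded middle holds. As the right-hand side of \textup{(Cl)} has the form $s\vee\lnot s$ with $s=b\wedge(c\vee d)$, it is identically $1$, so \textup{(Cl)} holds trivially.

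For \textup{(Nu)}, I would note that $\lnot\lnot$ sends $u$ to $v$ and fixes $0,v,w,1$, and that the $\lnot\lnot$-regular elements $\{0,v,w,1\}$ form a sublattice (a four-element Boolean algebra, with $v\wedge w=0$) closed under meets; the inequality $\lnot\lnot p\wedge\lnot\lnot q\leq\lnot\lnot(p\wedge q)$ then reduces to a handful of cases, the only non-immediate one being $\lnot\lnot u\wedge\lnot\lnot w=v\wedge w=0=\lnot\lnot(u\wedge w)$. Finally, \textup{(Vi)} fails under the valuation $a\mapsto v$, $c\mapsto u$, $e\mapsto w$, $f\mapsto u$: the left-hand side is $v\wedge(u\vee w)\wedge\lnot\lnot u=v\wedge 1\wedge v=v$, whereas the right-hand side is $(v\wedge u)\vee(v\wedge w)\vee u=u\vee 0\vee u=u$, and $v\nleq u$.

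The only genuine difficulty is locating this lattice-with-negation, since three tensions must be resolved at once: \textup{(Cl)} must be made cheap (here by arranging excluded middle), the lattice must remain non-distributive so that the $\lnot\lnot f$-boosted instance of \textup{(Vi)} witnessed above can fail, and the double-negation operation must not be so coarse that \textup{(Nu)} breaks --- as it would, for instance, on $M_3$ with the negation collapsing every atom to $0$, where $\lnot\lnot u\wedge\lnot\lnot v=1\nleq 0=\lnot\lnot(u\wedge v)$. The pentagon with the negation above threads this needle, and every verification is a finite computation.
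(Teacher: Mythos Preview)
Your proof is correct and is essentially identical to the paper's: your pentagon $N_5$ with $\lnot u=\lnot v=w$, $\lnot w=v$ is exactly the lattice the paper uses (under the relabeling $u\leftrightarrow c$, $v\leftrightarrow b$, $w\leftrightarrow a$), and your falsifying valuation for \textup{(Vi)} coincides with the paper's up to swapping the symmetric variables $c$ and $e$. Your additional observations---that any distributive fundamental lattice satisfies \textup{(Vi)}, and that excluded middle holds in this lattice, making \textup{(Cl)} trivially valid---are correct and nicely motivate the construction, though the paper does not spell them out.
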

\proof
Consider the following example.

\[\begin{tikzcd}
	& 1 \\
	&& b \\
	a \\
	&& c \\
	& 0
	\arrow[no head, from=2-3, to=1-2]
	\arrow[no head, from=3-1, to=1-2]
	\arrow[color={rgb,255:red,214;green,92;blue,92}, dashed, tail reversed, from=3-1, to=2-3]
	\arrow[no head, from=4-3, to=2-3]
	\arrow[color={rgb,255:red,214;green,92;blue,92}, dashed, from=4-3, to=3-1]
	\arrow[no head, from=5-2, to=3-1]
	\arrow[no head, from=5-2, to=4-3]
\end{tikzcd}\]

We leave it to the reader to check that \textup{(Nu)} and \textup{(Cl)} are valid on this lattice. To check that \textup{(Vi)} is not valid, note that 
\[b \wedge (a \vee c) \wedge \neg \neg c = b \nleq c = (b \wedge a) \vee (b \wedge c) \vee c,\] as desired.
\endproof

\begin{lemma}
Over fundamental logic, the formulas \textup{(Vi)} and \textup{(Cl)} together do not imply \textup{(Nu)}. 
\end{lemma}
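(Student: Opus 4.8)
The plan is to exhibit a finite fundamental lattice $L$ on which \textup{(Vi)} and \textup{(Cl)} are valid but \textup{(Nu)} fails; as with the previous two lemmas this is exactly what must be produced.

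Before describing $L$ I would record two constraints that pin down its rough shape. First, $L$ cannot be an ortholattice, since \textup{(Nu)} holds in every ortholattice (and in every Heyting lattice). Second, $L$ cannot be distributive: instantiating \textup{(Cl)} with $b=1$ and $d=0$ yields $\lnot(a\wedge y)\wedge a\leq y\vee\lnot y$ for all $a,y$, and in a distributive lattice, taking $a=y^{*}$ for the pseudo-complement $y^{*}=\bigvee\{x:x\wedge y=0\}$ (which there satisfies $y^{*}\wedge y=0$) gives $y^{*}\leq y\vee\lnot y$, hence $y^{*}=y^{*}\wedge(y\vee\lnot y)=y^{*}\wedge\lnot y\leq\lnot y$; since also $\lnot y\leq y^{*}$ by semi-complementation, $\lnot$ is the pseudo-complement, so $L$ is a Heyting lattice and \textup{(Nu)} holds. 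Thus $L$ must be non-distributive with $\lnot\lnot\neq\mathrm{id}$, and moreover (again by the $b=1$, $d=0$ instance of \textup{(Cl)}) every element $W$ with $\lnot W=0$ must be \emph{dense}.

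Guided by this I would build a small non-distributive fundamental lattice whose weak pseudo-complement is chosen so that the double-negation closure $\lnot\lnot$ sends two incomparable elements $p,q$ to one common regular value $r$ lying strictly above $p\wedge q$; then $\lnot\lnot p\wedge\lnot\lnot q=r\not\leq\lnot\lnot(p\wedge q)$, so \textup{(Nu)} fails. The non-distributivity should be introduced through an $N_{5}$-pattern rather than an $M_{3}$-pattern, so that each non-distributive instance $a\wedge(c\vee e)>(a\wedge c)\vee(a\wedge e)$ has its ``defect'' sitting above a fixed small \emph{regular} element instead of above $0$; and the non-regular elements should be kept away from that pattern, so that the requirements \textup{(Vi)} imposes reduce to finitely many inequalities of the form $b\wedge\lnot\lnot f\leq c\vee f$ that hold as soon as the relevant small elements are regular. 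Finally \textup{(Cl)} is checked directly: since $\lnot W=0$ only at the dense element, the right-hand side $(b\wedge(c\vee d))\vee\lnot(b\wedge(c\vee d))$ equals the top in all but finitely many cases, and those are dispatched from the Hasse diagram of $L$.

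The hard part is reconciling the failure of \textup{(Nu)} with the validity of \textup{(Vi)}: forcing $\lnot\lnot p$ and $\lnot\lnot q$ to overlap more than $p$ and $q$ do threatens to make $\lnot\lnot f$ ``overshoot'' the consequent of \textup{(Vi)} at some non-distributive triple (this is precisely why an $M_{3}$-type non-distributivity is unusable, and why the non-distributive part of $L$ must be isolated from the non-regular elements). Tuning the lattice so that both demands are met is the delicate step, and it is what dictates the size and shape of the counter-model; once such an $L$ is fixed, verifying \textup{(Vi)}, \textup{(Cl)}, and the failure of \textup{(Nu)} is a finite computation.
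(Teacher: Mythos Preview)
Your proposal identifies reasonable constraints and a plausible strategy, but it never actually exhibits a lattice, and that is a genuine gap. A non-implication result of this kind is proved by producing a concrete model; the ``finite computation'' you defer to is the entire content of the proof. As you yourself note, reconciling the failure of \textup{(Nu)} with the validity of \textup{(Vi)} is delicate---and you stop precisely at that delicate step. Without a specific $L$ and a verification that \textup{(Vi)} and \textup{(Cl)} hold on it, nothing has been established.

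A secondary point: the heuristic you outline (force $\lnot\lnot p=\lnot\lnot q=r$ for two incomparable $p,q$, with $r$ strictly above $\lnot\lnot(p\wedge q)$) is more elaborate than what the paper actually does. The paper's eight-element example instead makes one of the two double negations trivial: it arranges $\lnot a=0$, so $\lnot\lnot a=1$, whence $\lnot\lnot a\wedge\lnot\lnot b=\lnot\lnot b=b$, while $a\wedge b$ is a small element $e$ with $\lnot\lnot e=f<b$. Concretely, the lattice has chains $0<c<d<a<1$ and $0<e<f<b<1$ together with the extra relation $e<a$; the negation is $\lnot a=0$, $\lnot b=c$, $\lnot c=b$, $\lnot d=f$, $\lnot f=d$, $\lnot e=d$. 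This avoids the tension you worry about, because the single dense element $a$ sits on the distributive ``chain'' side of the lattice and does not interact badly with the $N_5$-type pattern formed by $a,d,e,f$. Your preparatory observations (that $L$ cannot be an ortholattice, and---in the finite case---cannot be distributive) are correct and do point in the right direction, but they are not a substitute for the construction itself.
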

\proof
Consider the following example.

\[\begin{tikzcd}
	& 1 \\
	a && b \\
	d && f \\
	c && e \\
	& 0
	\arrow[no head, from=2-1, to=1-2]
	\arrow[color={rgb,255:red,214;green,92;blue,92}, dashed, from=2-1, to=5-2]
	\arrow[no head, from=2-3, to=1-2]
	\arrow[color={rgb,255:red,214;green,92;blue,92}, dashed, tail reversed, from=2-3, to=4-1]
	\arrow[no head, from=3-1, to=2-1]
	\arrow[color={rgb,255:red,214;green,92;blue,92}, dashed, tail reversed, from=3-1, to=3-3]
	\arrow[no head, from=3-3, to=2-3]
	\arrow[no head, from=4-1, to=3-1]
	\arrow[no head, from=4-3, to=2-1]
	\arrow[color={rgb,255:red,214;green,92;blue,92}, dashed, from=4-3, to=3-1]
	\arrow[no head, from=4-3, to=3-3]
	\arrow[no head, from=5-2, to=4-1]
	\arrow[no head, from=5-2, to=4-3]
\end{tikzcd}\]

We leave it to the reader to check that \textup{(Vi)} and \textup{(Cl)} are valid on this lattice. To check that \textup{(Nu)} is not valid, note that
\[\neg \neg a \land \neg \neg b = b \nleq f = \neg\neg(a \land b),\]
as desired.
\endproof
\noindent With these three counterexamples, the proof of the theorem is complete.
\endproof

\section{Super-Ex Logics}\label{SectProofMain}
\subsection{Proof of Theorem \ref{TheoremMain}}\label{SectProofMainProof}
We now prove Theorem \ref{TheoremMain}. The proof is a modification of the proof of Theorem \ref{TheoremOI}. 
The crucial ingredient in the proof of Theorem \ref{TheoremOI} was the Ex-Embedding Theorem; we will have to modify this theorem and its proof very slightly. 

Let $\mathcal{L}_O$ be a logic extending orthologic by axioms $\{\phi_i\vdash\psi_i\}_i$ and $\mathcal{L}_I$ be a logic extending intuitionistic logic by axioms $\{\chi_j\vdash\theta_j\}_j$, both in the signature $\{\wedge, \vee,\lnot\}$. 
\begin{claim}
For each $i$, the axiom $\phi_i \vdash \lnot\lnot\psi_i$ is valid in intuitionistic logic. Thus, it is a joint validity of $\mathcal{L}_I$ and $\mathcal{L}_O$.
\end{claim}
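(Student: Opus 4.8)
The plan is to reduce the claim to a Glivenko-style transfer principle. First I would observe that the sequent $\phi_i\vdash\psi_i$, being one of the defining axioms of $\mathcal{L}_O$, is valid in every member of the variety $\mathbb{V}(\mathcal{L}_O)$ (note this is a genuine variety, since over orthologic the sequent $\phi_i\vdash\psi_i$ amounts to the equation $\phi_i\wedge\psi_i=\phi_i$). Assuming $\mathcal{L}_O$ is consistent, $\mathbb{V}(\mathcal{L}_O)$ contains a non-degenerate ortholattice, hence contains its two-element subalgebra $\mathbf{2}$; so $\phi_i\vdash\psi_i$ is valid in $\mathbf{2}$, i.e.\ classically valid.

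Next I would prove the key lemma: if an entailment $\alpha\vdash\beta$ in the signature $\{\wedge,\vee,\lnot\}$ is classically valid, then $\alpha\vdash\lnot\lnot\beta$ is valid in intuitionistic logic, i.e.\ in every Heyting lattice. Fix a Heyting lattice $H$ and recall that the set $H_{\lnot\lnot}=\{x\in H:\lnot\lnot x=x\}$ of regular elements, equipped with the operations $\wedge,\lnot,0,1$ inherited from $H$ and with join $x\sqcup y=\lnot\lnot(x\vee y)$, is a Boolean algebra, and that $j\colon H\to H_{\lnot\lnot}$, $j(x)=\lnot\lnot x$, is a surjective homomorphism of $\{\wedge,\vee,\lnot,0,1\}$-algebras. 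The only point needing care is preservation of $\vee$, which reduces to the identity $\lnot\lnot(x\vee y)=\lnot\lnot(\lnot\lnot x\vee\lnot\lnot y)$, valid in every Heyting lattice. Given a valuation $v$ on $H$, the composite $j\circ v$ is a valuation into the Boolean algebra $H_{\lnot\lnot}$, so classical validity of $\alpha\vdash\beta$ yields $\lnot\lnot v(\alpha)\leq\lnot\lnot v(\beta)$ in $H$; combining this with $v(\alpha)\leq\lnot\lnot v(\alpha)$ gives $v(\alpha)\leq v(\lnot\lnot\beta)$, as desired.

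Applying the lemma with $\alpha=\phi_i$ and $\beta=\psi_i$ shows that $\phi_i\vdash\lnot\lnot\psi_i$ is valid in intuitionistic logic, and therefore also in $\mathcal{L}_I$, since $\mathcal{L}_I$ extends intuitionistic logic. It then remains to check that $\phi_i\vdash\lnot\lnot\psi_i$ is valid in $\mathcal{L}_O$ as well: $\phi_i\vdash\psi_i$ is an axiom of $\mathcal{L}_O$ and $\psi_i\vdash\lnot\lnot\psi_i$ holds in every fundamental lattice, so transitivity (cut) gives $\phi_i\vdash\lnot\lnot\psi_i$ in $\mathcal{L}_O$. Hence $\phi_i\vdash\lnot\lnot\psi_i$ is a joint validity of $\mathcal{L}_O$ and $\mathcal{L}_I$.

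I expect the Glivenko-type lemma to be the main obstacle: one must check that it survives the restriction to the implication-free fragment (it does, because Heyting-lattice semantics is complete for that fragment and the argument above is purely algebraic), and that the double-negation map, while not a lattice homomorphism into $H$ itself, does become an honest $\{\wedge,\vee,\lnot,0,1\}$-homomorphism onto the Booleanization $H_{\lnot\lnot}$. A secondary, bookkeeping-level point is the reduction of ``valid in $\mathcal{L}_O$'' to ``classically valid'', which uses only that $\mathbf{2}$ is a subalgebra of every non-degenerate ortholattice and hence lies in $\mathbb{V}(\mathcal{L}_O)$.
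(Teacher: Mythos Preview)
Your proposal is correct and follows essentially the same route as the paper: reduce to classical validity of $\phi_i\vdash\psi_i$, invoke Glivenko to get $\phi_i\vdash\lnot\lnot\psi_i$ intuitionistically, and note the $\mathcal{L}_O$ side is immediate. The paper simply cites Glivenko's theorem and asserts classical validity ``by assumption'', whereas you spell out the Booleanization argument and justify classical validity via $\mathbf{2}\in\mathbb{V}(\mathcal{L}_O)$; these are exactly the details one would expect to fill in.
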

\begin{proof}
By assumption, $\phi_i\vdash \psi_i$ is classically valid. Using Glivenko's theorem, this means that $\phi_i \vdash \neg \neg \psi_i$ is derivable in intuitionistic logic. By double negation elimination in orthologic, we thus have that $\phi_i \vdash \neg \neg \psi_i$ is a joint validity of both $\mathcal{L}_O$ and $\mathcal{L}_I$.
\end{proof}

\begin{claim}\label{ClaimHeytingLattice}
Let $A$ be a Heyting lattice. Then for each $j$,
$\chi_j \vdash \theta_j$ is valid on $A$ if and only if $\chi_j \vdash \theta_j \vee \neg \theta_j$ is valid on $A$. Therefore, for each $j$, $\chi_j \vdash \theta_j \vee \lnot\theta_j$ is a joint validity of $\mathcal{L}_I$ and $\mathcal{L}_O$.
\end{claim}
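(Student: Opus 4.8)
The plan is to prove \Cref{ClaimHeytingLattice} by a direct argument in the Heyting lattice $A$, exploiting the fact that in a Heyting lattice the formula $\theta \vee \neg\theta$ behaves like a definable ``regular core'' of the top element, and then to conclude the joint-validity statement using the first part together with the earlier claim on $\phi_i \vdash \lnot\lnot\psi_i$.

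\medskip

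\noindent\textbf{Step 1: the equivalence on a fixed Heyting lattice.} The right-to-left direction is immediate: if $\chi_j \vdash \theta_j \vee \neg\theta_j$ holds on $A$ under every valuation, then in particular it holds; but we do \emph{not} directly get $\chi_j \leq \theta_j$ from $\chi_j \leq \theta_j \vee \neg\theta_j$ in general, so the argument must be subtler. Instead I would argue contrapositively or via substitution. The key observation is the following substitution trick: given a valuation $v$ witnessing $v(\chi_j) \nleq v(\theta_j)$, consider the modified valuation $v'$ obtained by composing $v$ with the Heyting-lattice endomorphism (or rather, a suitable retraction) sending each $a$ to $a$, or alternatively work inside the subalgebra of regular elements. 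More concretely: in a Heyting lattice, $a \leq b \vee \neg b$ for \emph{all} $a$ is not automatic, but the map $a \mapsto a \wedge (b \vee \neg b)$ need not be a homomorphism either, so the cleanest route is to use the standard fact that $\chi_j \vdash \theta_j$ is valid on $A$ iff it is valid on every \emph{subdirectly irreducible} quotient of $A$, and on those $\theta_j \vee \neg \theta_j$ is comparable appropriately. Actually, the simplest correct argument: left-to-right is trivial since $\theta_j \leq \theta_j \vee \neg\theta_j$. For right-to-left, suppose $\chi_j \vdash \theta_j \vee \neg\theta_j$ is valid on $A$ but $\chi_j \vdash \theta_j$ is not, witnessed by $v$ with $v(\chi_j) \nleq v(\theta_j)$. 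Pass to the quotient of $A$ by the filter generated by $v(\theta_j) \vee \neg v(\theta_j)$ wait — one wants instead to quotient so as to force $\theta_j \vee \neg\theta_j = 1$; in a Heyting algebra, modding out by the filter $\mathord{\uparrow}(v(\theta_j) \vee \neg v(\theta_j))$ yields a Heyting lattice $A'$ in which the image of $v(\theta_j)$ is complemented, hence in $A'$ one has (image of $\chi_j$) $\leq$ (image of $\theta_j \vee \neg\theta_j$) $= 1$, which gives no contradiction. So the right quotient to take forces $\theta_j$ itself to collapse: consider the congruence identifying $v(\chi_j) \wedge \neg v(\theta_j)$ with $0$? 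This is getting delicate — see the obstacle paragraph.

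\medskip

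\noindent\textbf{Step 2: from the equivalence to joint validity.} Once Step 1 is established, the ``therefore'' is short. Every Heyting lattice validating $\mathcal{L}_I$ validates $\chi_j \vdash \theta_j$, hence by Step 1 validates $\chi_j \vdash \theta_j \vee \neg\theta_j$; since intuitionistic logic is the logic of all Heyting lattices (\Cref{PropositionFLComplete}), it follows that $\chi_j \vdash \theta_j \vee \neg\theta_j$ is derivable in intuitionistic logic itself — wait, that is too strong; rather, it is a theorem of $\mathcal{L}_I$. Separately, $\chi_j \vdash \theta_j \vee \neg\theta_j$ is valid in orthologic: indeed $\theta_j \vee \neg\theta_j = 1$ holds in every ortholattice by double-negation elimination and the consequent law of excluded middle, so $\chi_j \leq 1 = \theta_j \vee \neg\theta_j$ trivially. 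Hence $\chi_j \vdash \theta_j \vee \neg\theta_j$ is valid in every ortholattice, so it is a theorem of $\mathcal{L}_O$, and being a theorem of both $\mathcal{L}_O$ and $\mathcal{L}_I$ it is a joint validity.

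\medskip

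\noindent\textbf{Main obstacle.} The real content is the right-to-left direction of Step 1: deducing $\chi_j \leq \theta_j$ from $\chi_j \leq \theta_j \vee \neg\theta_j$ \emph{holding under all valuations}, not just one. The point is that the hypothesis is a universally quantified statement, so one gets to substitute cleverly. I expect the clean argument to run: assume $\chi_j \vdash \theta_j \vee \neg\theta_j$ is valid on $A$ and pick any valuation $v$; we want $v(\chi_j) \leq v(\theta_j)$. Apply the hypothesis to the valuation $v'$ that agrees with $v$ on all variables occurring in $\theta_j$ but sends every variable $p$ occurring in $\chi_j$ but not in $\theta_j$ to $p \wedge v(\theta_j)$ — or more robustly, exploit that $\theta_j$ and $\chi_j$ may share variables and instead substitute $p \mapsto p \wedge a$ uniformly where $a$ is a fresh control element, then specialize. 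The slick version standard in the literature: $\chi_j \vdash \theta_j \vee \neg\theta_j$ implies $\chi_j \wedge \neg\neg\theta_j \vdash \theta_j$? No. Let me just flag that the correct mechanism is substituting into the valid inequality the assignment that replaces every propositional variable $p$ by $p \wedge \neg\neg\theta_j$ — wait, but $\neg\neg\theta_j$ is not a variable. The honest obstacle is pinning down exactly which substitution converts the weaker disjunctive entailment into the stronger one; I expect it to be: substitute $p \mapsto (p \to \theta_j) \to \theta_j$ for every variable, wait, there is no implication in the signature. Given the restricted signature $\{\wedge,\vee,\neg\}$, the available trick is $p \mapsto \neg\neg p$-style doubling together with the fact that in a Heyting lattice $\chi \leq \theta \vee \neg\theta$ plus $\neg\theta \wedge \theta = 0$ and distributivity give $\chi \wedge \neg\theta \leq (\theta \wedge \neg\theta) \vee (\neg\theta \wedge \neg\theta) = \neg\theta$; combined with $\chi \wedge \theta \leq \theta$ one gets $\chi = \chi \wedge (\theta \vee \neg\theta) = (\chi\wedge\theta)\vee(\chi\wedge\neg\theta) \leq \theta \vee \neg\theta$, which is circular. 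So the resolution must genuinely use the universal quantifier over valuations plus a substitution instance — identifying that instance precisely is the crux, and I would allot most of the proof-writing effort there.
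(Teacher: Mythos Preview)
Your proposal has a genuine gap in the right-to-left direction of Step~1, and the root cause is that you are attempting to prove a statement that is \emph{false} for arbitrary formulas $\chi,\theta$. Take any non-Boolean Heyting lattice $A$, set $\chi = p\vee\neg p$ and $\theta = p$: then $\chi\vdash\theta\vee\neg\theta$ is trivially valid on $A$, but $\chi\vdash\theta$ is not (evaluate $p$ at $0$). So no substitution trick, quotient construction, or subdirect-irreducible reduction can possibly work in the generality you are assuming --- this explains why every avenue you tried collapsed.

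What you are missing is the piece of context surrounding the claim: the $\chi_j\vdash\theta_j$ are not arbitrary sequents but the \emph{axioms} of a superintuitionistic logic $\mathcal{L}_I$, and such a logic is (tacitly, or by convention) a sublogic of classical logic. Hence each $\chi_j\vdash\theta_j$ is \emph{classically} valid, so $\vdash\neg(\chi_j\wedge\neg\theta_j)$ is a classical tautology, and by Glivenko's theorem it is already an \emph{intuitionistic} tautology. Consequently $V(\chi_j\wedge\neg\theta_j)=0$ for \emph{every} valuation $V$ on \emph{every} Heyting lattice $A$. Now the computation is a one-liner using distributivity and the hypothesis $V(\chi_j)\leq V(\theta_j)\vee V(\neg\theta_j)$:
\[
V(\chi_j)=V(\chi_j)\wedge\big(V(\theta_j)\vee V(\neg\theta_j)\big)=\big(V(\chi_j)\wedge V(\theta_j)\big)\vee\big(V(\chi_j)\wedge V(\neg\theta_j)\big)=V(\chi_j\wedge\theta_j)\vee 0\leq V(\theta_j).
\]
This is exactly the paper's argument. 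Your Step~2 is essentially correct once Step~1 is in hand.
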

\proof
Clearly, if $\chi_j \vdash \theta_j$ is valid on $A$, so is $\chi_j \vdash \theta_j \lor \neg \theta_j$. Now suppose that $\chi_j \vdash \theta_j \lor \neg \theta_j$ is valid on $A$. Fix a valuation $V$ on $A$. We will show that $V(\chi_j) \leq_A V(\theta_j)$. Note first that, since $\chi_j \vdash \theta_j$ is valid in $\mc{L}_I$ and $\mc{L}_I$ is a sublogic of classical logic, $\vdash \neg(\chi_j \land \neg \theta_j)$ is a classical validity. By Glivenko's theorem, this means that $\vdash \neg(\chi_j \land \neg \theta_j)$ is also an intuitionistic validity. Since $A$ is a Heyting lattice, we have that $V(\neg(\chi_j\land \neg \theta_j)) = 1$, which implies that $V(\chi_j \land \neg \theta_j) = 0$. Now we compute:
\begin{align*}
    V(\chi_j) = V(\chi_j) \land (V(\theta_j) \lor V(\neg\theta_j)) &= V(\chi_j \land \theta_j) \lor V(\chi_j \land \neg \theta_j) \\&= V(\chi_j \land \theta_j) \lor 0\\ &= V(\chi_j \land \theta_j),
\end{align*} which shows that $V(\chi_j) \leq_A V(\theta_j)$.

For the ``therefore'' part, simply notice that $\chi_j \vdash \theta_j\vee\lnot\theta_j$ is always valid in orthologic, by excluded middle.
\endproof

We let $\mathcal{L}$ be the result of extending fundamental logic by axioms $\{\phi_i\vdash\psi_i\}_i$, $\{\chi_j\vdash\theta_j\}_j$, and \textup{(Ex)}.
We have shown that $\mathcal{L}$ is contained in the intersection of $\mathcal{L}_O$ and $\mathcal{L}_I$. 
Let $L$ be an $\mathcal{L}$-lattice. The Ex-Embedding Theorem provides a fundamental embedding 
\[e: L \to O_L \times A_L\]
where $O_L$ is an ortholattice and $A_L$ is a Heyting lattice.
\begin{claim}\label{ClaimResimGen}
$O_L$ is an $\mathcal{L}_O$-lattice.
\end{claim}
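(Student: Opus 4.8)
I want to show that $O_L = L/{\sim}$ validates each axiom $\phi_i \vdash \psi_i$. The natural strategy is: since $\mathcal{L}_O$ extends orthologic, and $O_L$ is already known to be an ortholattice, it suffices to check that every valuation on $O_L$ makes $\phi_i \leq \psi_i$ hold. Given an arbitrary valuation $w$ on $O_L$, I would like to lift it to a valuation on $L$ and use the fact that $L$ is an $\mathcal{L}$-lattice together with the fact that the quotient map $a \mapsto a^*$ is a fundamental homomorphism. The subtlety is that the quotient map need not be surjective onto all of $O_L$ from a \emph{single} element — but it \emph{is} surjective, since $O_L = \{a^* : a \in L\}$ by construction \eqref{eqDefO_L}. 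So for each propositional variable $p$, pick some $a_p \in L$ with $a_p^* = w(p)$, and define a valuation $v$ on $L$ by $v(p) = a_p$.

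\begin{proof}[Proof of Claim \ref{ClaimResimGen}]
Since $O_L$ is an ortholattice, it suffices to check that each axiom $\phi_i \vdash \psi_i$ defining $\mathcal{L}_O$ is valid on $O_L$. Fix $i$ and an arbitrary valuation $w$ on $O_L$. Since the quotient map $q : L \to O_L$, $q(a) = a^*$, is surjective by the definition \eqref{eqDefO_L} of $O_L$, for each propositional variable $p$ we may choose $a_p \in L$ with $q(a_p) = w(p)$; define a valuation $v$ on $L$ by $v(p) = a_p$. Because $q$ is a fundamental homomorphism, a straightforward induction on the structure of formulas gives $q(v(\sigma)) = w(\sigma)$ for every formula $\sigma$. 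Now $L$ is an $\mathcal{L}$-lattice, and $\phi_i \vdash \psi_i$ is among the axioms of $\mathcal{L}$, so $v(\phi_i) \leq v(\psi_i)$ in $L$; applying the order-preserving map $q$ yields $w(\phi_i) = q(v(\phi_i)) \leq q(v(\psi_i)) = w(\psi_i)$ in $O_L$. As $w$ was arbitrary, $\phi_i \vdash \psi_i$ is valid on $O_L$. Since this holds for every $i$, and $O_L$ is an ortholattice, $O_L$ is an $\mathcal{L}_O$-lattice.
\end{proof}

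**Where the difficulty lies.** The only genuine point to get right is that the quotient map is surjective and a fundamental homomorphism — both already established in the excerpt (the surjectivity is immediate from $O_L = \{a^* : a \in L\}$, and the homomorphism property is the lemma immediately following Lemma \ref{LemmaResim}). The inductive transfer $q(v(\sigma)) = w(\sigma)$ is routine: it uses exactly that $q$ commutes with $\wedge, \vee, \lnot$ and fixes $0, 1$. I expect no real obstacle here; the analogous Claim for $A_L$ (using Claim \ref{ClaimHeytingLattice}, so that it suffices to verify $\chi_j \vdash \theta_j \vee \lnot\theta_j$ rather than $\chi_j \vdash \theta_j$ directly, since $A_L$ is only known to be \emph{some} Heyting lattice and the axioms $\chi_j \vdash \theta_j$ of $\mathcal{L}_I$ need not have been among those of $\mathcal{L}$) will be handled in the same spirit right afterwards, and then the proof of Theorem \ref{TheoremMain} concludes exactly as the proof of Theorem \ref{TheoremOI} did, projecting the embedding $e : L \to O_L \times A_L$ onto its two coordinates.
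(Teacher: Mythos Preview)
Your overall strategy is exactly the paper's: use that the quotient map $q:L\to O_L$ is a surjective fundamental homomorphism, so validity of inequalities transfers from $L$ to $O_L$. The paper just says this in one line (``$O_L$ is the quotient of $L$ via the congruence $\resim$, so \ldots''), while you spell out the valuation-lifting argument explicitly; that is fine.

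There is, however, one genuine slip. You write that ``$\phi_i \vdash \psi_i$ is among the axioms of $\mathcal{L}$,'' but in the setting of Theorem~\ref{TheoremMain} the axioms added to fundamental logic are $\phi_i \vdash \lnot\lnot\psi_i$ (and $\chi_j \vdash \theta_j \vee \lnot\theta_j$), not $\phi_i \vdash \psi_i$ itself. The inline sentence in the paper defining $\mathcal{L}$ is a typo; the statement of Theorem~\ref{TheoremMain} and the paper's own proof of the Claim make clear that only the double-negated form is assumed valid in $L$. So from $L$ being an $\mathcal{L}$-lattice you only get $v(\phi_i) \leq \lnot\lnot v(\psi_i)$ in $L$, hence $w(\phi_i) \leq \lnot\lnot w(\psi_i)$ in $O_L$ after applying $q$; you then need one more step, namely double-negation elimination in the ortholattice $O_L$, to conclude $w(\phi_i) \leq w(\psi_i)$. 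This is precisely what the paper does. You already recognize this pattern in your commentary about $A_L$ (where one must pass through $\chi_j \vdash \theta_j \vee \lnot\theta_j$); the same maneuver is needed here for $O_L$.
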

\proof
Let $\phi_i \vdash \psi_i$ be any of the axioms of $\mathcal{L}_O$.
By Lemma \ref{LemmaResim}, $O_L$ is the quotient of $L$ via the congruence $\resim$, so if $\phi_i \vdash \neg \neg \psi_j$ is valid in $L$, then it is too in $O_L$. Since this is an ortholattice, it thus validates $\phi_i \vdash \psi_j$.
\endproof

\begin{claim}
$A_L$ is an $\mathcal{L}_I$-lattice. 
\end{claim}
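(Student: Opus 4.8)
The plan is to combine two facts that are already in hand: that $A_L$ is a \emph{surjective} fundamental homomorphic image of $L$ — via the map $\widehat{\cdot}$ of Lemma~\ref{emblma} — and that $A_L$ is a Heyting lattice, so that Claim~\ref{ClaimHeytingLattice} applies to it. The one subtlety to bear in mind is that $\chi_j \vdash \theta_j$ need not itself be a theorem of $\mathcal{L}$; what $\mathcal{L}$ guarantees is only the weaker entailment $\chi_j \vdash \theta_j \vee \lnot\theta_j$. So one cannot push $\chi_j \vdash \theta_j$ directly along the homomorphism $L \to A_L$; instead one transfers the weaker entailment and then recovers the stronger one from the distributive (Heyting) structure of $A_L$.

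In detail, I would argue as follows. First, since $L$ is an $\mathcal{L}$-lattice, $L$ validates $\chi_j \vdash \theta_j \vee \lnot\theta_j$ for every $j$. Second, because $\widehat{\cdot} \colon L \to A_L$ is a surjective fundamental homomorphism, validity of entailments transfers from $L$ to $A_L$: every valuation on $A_L$ is of the form $\widehat{\cdot} \circ v$ for some valuation $v$ on $L$, and $\widehat{\cdot}$ is order preserving and commutes with $\wedge, \vee, \lnot$, so $\widehat{v(\chi_j)} \le \widehat{v(\theta_j \vee \lnot\theta_j)}$ follows from $v(\chi_j) \le v(\theta_j \vee \lnot\theta_j)$; hence $A_L$ validates $\chi_j \vdash \theta_j \vee \lnot\theta_j$ for every $j$. (This is exactly the quotient argument used for $O_L$ in the proof of Claim~\ref{ClaimResimGen}.) Third, by the Ex-Embedding Theorem (Theorem~\ref{mainthm}), $A_L$ is a Heyting lattice, so Claim~\ref{ClaimHeytingLattice}, applied with $A = A_L$, upgrades the validity of $\chi_j \vdash \theta_j \vee \lnot\theta_j$ on $A_L$ to the validity of $\chi_j \vdash \theta_j$ on $A_L$. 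Finally, every Heyting lattice validates the implication-free fragment of intuitionistic logic, and $A_L$ additionally validates each axiom $\chi_j \vdash \theta_j$ of $\mathcal{L}_I$; therefore $A_L$ is an $\mathcal{L}_I$-lattice.

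The only step with any substance is the second one, namely the observation that a surjective fundamental homomorphism preserves validity of entailments $\phi \vdash \psi$. This is immediate, either by the valuation-lifting argument just sketched or by noting that $\phi \vdash \psi$ amounts to the lattice identity $\phi \wedge \psi = \phi$, which is inherited by homomorphic images. Since this is the same observation already used to prove Claim~\ref{ClaimResimGen}, I do not anticipate a genuine obstacle: the real content of the statement is carried entirely by Claim~\ref{ClaimHeytingLattice} together with the Heyting structure on $A_L$ supplied by the Ex-Embedding Theorem.
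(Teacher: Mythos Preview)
Your proposal is correct and follows exactly the paper's own argument: use the surjectivity of $\widehat{\cdot}$ (Lemma~\ref{emblma}) to transfer the validity of $\chi_j \vdash \theta_j \vee \lnot\theta_j$ from $L$ to $A_L$, and then invoke Claim~\ref{ClaimHeytingLattice} on the Heyting lattice $A_L$ to recover $\chi_j \vdash \theta_j$. Your write-up is somewhat more explicit about why surjective homomorphisms preserve validity, but the strategy is identical.
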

\proof
Let $\chi_j \vdash \theta_j$ be any of the axioms of $\mathcal{L}_I$.
By \cref{emblma}, the map $a \mapsto \widehat{a}$ is a surjective fundamental homomorphism. Since $\chi_j \vdash \theta_j\vee\lnot \theta_j$ is valid on $L$, it is also valid on $A_L$. Since $A_L$ is a Heyting lattice, Claim \ref{ClaimHeytingLattice} implies that $\chi_j \vdash \theta_j$ is also valid on $A_L$.
\endproof

The argument in the proof of Theorem \ref{TheoremOI} now shows that $\mathcal{L}$ is contained in the intersection of $\mathcal{L}_O$ and $\mathcal{L}_I$, which completes the proof of the theorem.

\subsection{Orthomodular and De Morgan logics}
We now prove Corollary \ref{CorollaryOML} and Corollary \ref{CorollaryDML}. We restate them for convenience.
\begin{corollary}
The joint validities of ortho-modular logic and the implication-free fragment of intuitionistic logic are axiomatized by fundamental logic augmented by the axiom  \textup{(Ex)} and
\begin{equation}\label{eqOMAxiom}
(a \vee \lnot a) \wedge (a \vee b) \vdash a \vee (\lnot a \wedge (a \vee b))
\end{equation}
\end{corollary}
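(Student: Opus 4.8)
The plan is to derive this from Theorem \ref{TheoremMain}. First I would note that orthomodular logic is orthologic extended by the single entailment
\[
(\dagger)\qquad a \vee b \vdash a \vee \big(\lnot a \wedge (a \vee b)\big),
\]
which over orthologic is equivalent to the orthomodular law: substituting $a\vee b$ for $b$ in the conditional form ``$a\le b \Rightarrow a\vee(\lnot a\wedge b)=b$'' yields $(\dagger)$, the reverse inequality $a\vee(\lnot a\wedge(a\vee b))\le a\vee b$ being automatic; conversely, applying $(\dagger)$ in the case $a\le b$ (so that $a\vee b=b$) recovers the law. Taking $\mathcal{L}_O$ to be orthologic together with $(\dagger)$ and $\mathcal{L}_I$ to be intuitionistic logic with no additional axioms (so that clause (2) of Theorem \ref{TheoremMain} is vacuous), Theorem \ref{TheoremMain} yields that the joint validities of orthomodular logic and the implication-free fragment of intuitionistic logic are axiomatized by fundamental logic together with \textup{(Ex)} and
\[
(\star)\qquad a \vee b \vdash \lnot\lnot\big(a \vee (\lnot a \wedge (a \vee b))\big).
\]

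It then remains to show that $(\star)$ and \eqref{eqOMAxiom} are equivalent over fundamental logic $+$ \textup{(Ex)}; write $\theta := a\vee(\lnot a\wedge(a\vee b))$. For the implication \eqref{eqOMAxiom}$\Rightarrow(\star)$, I would apply $\lnot\lnot$ (which is monotone) to \eqref{eqOMAxiom} and then invoke \textup{(Nu)} from Lemma \ref{LemmaNu} together with the fundamental identity $\lnot\lnot(a\vee\lnot a)=1$ (which holds because $\lnot(a\vee\lnot a)=\lnot a\wedge\lnot\lnot a=0$ by Remark \ref{RemarkDeMorgan} and semi-complementation, and $\lnot 0 = 1$): this chains to $a\vee b\le\lnot\lnot(a\vee b)=\lnot\lnot(a\vee\lnot a)\wedge\lnot\lnot(a\vee b)\le\lnot\lnot\big((a\vee\lnot a)\wedge(a\vee b)\big)\le\lnot\lnot\theta$, which is $(\star)$. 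For the converse, I would instantiate \textup{(Vi)} from Lemma \ref{LemmaVi}, with the variable ``$a$'' there set to $a\vee b$, ``$c$'' to $a$, ``$e$'' to $\lnot a$, and ``$f$'' to $\theta$: the left-hand side becomes $(a\vee b)\wedge(a\vee\lnot a)\wedge\lnot\lnot\theta$, which equals $(a\vee\lnot a)\wedge(a\vee b)$ by $(\star)$, while the right-hand side collapses under absorption to $a\vee(\lnot a\wedge(a\vee b))\vee\theta=\theta$; hence $(a\vee\lnot a)\wedge(a\vee b)\le\theta$, i.e.\ \eqref{eqOMAxiom}.

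The bulk of the argument — verifying that $(\dagger)$ indeed captures the orthomodular law over orthologic and simplifying the two sides of the chosen instance of \textup{(Vi)} — is routine, relying only on the ortholattice identity $a\vee\lnot a=1$, the absorption law, and elementary manipulations. The single step requiring an idea is the direction $(\star)\Rightarrow$\eqref{eqOMAxiom}: one has to hit on the right instance of \textup{(Vi)}, the crucial choices being to make the disjunction ``$c\vee e$'' equal to $a\vee\lnot a$ and to take the doubly negated parameter ``$f$'' to be $\theta$ itself, so that $(\star)$ erases the double negation on the left while absorption collapses the right-hand side to $\theta$. I expect this to be the main obstacle; the rest is bookkeeping.
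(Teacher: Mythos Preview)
Your argument is correct, but it takes a genuinely different route from the paper's. The paper explicitly remarks that the corollary ``is not immediate from the statement of Theorem \ref{TheoremMain} (which would yield a different axiomatization), but follows from the proof,'' and accordingly re-runs the Ex-Embedding construction: one checks directly that \eqref{eqOMAxiom} is intuitionistically valid, and then, for completeness, observes that \eqref{eqOMAxiom} passes to the quotient $O_L$ (Lemma \ref{LemmaResim}), where in the presence of excluded middle it becomes the usual orthomodular axiom, so $O_L$ is orthomodular. You instead apply Theorem \ref{TheoremMain} as a black box to obtain the axiomatization FL $+$ (Ex) $+$ $(\star)$, and then prove syntactically that $(\star)$ and \eqref{eqOMAxiom} are interderivable over FL $+$ (Ex), using (Nu) for one direction and a well-chosen instance of (Vi) for the other. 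Your approach has the pleasant consequence of showing that the corollary \emph{does} follow from the bare statement of Theorem \ref{TheoremMain} after all, at the cost of the somewhat delicate (Vi)-instantiation; the paper's approach avoids that trick but requires reopening the embedding machinery.
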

\proof
This corollary is not immediate from the statement of Theorem \ref{TheoremMain} (which would yield a different axiomatization), but follows from the proof.
It is well known that orthomodular logic is obtained from orthologic by adding the axiom 
\begin{equation}\label{eqOMAxiomEM}
(a \vee b) \vdash a \vee (\lnot a \wedge (a \vee b)).
\end{equation}
This is clearly equivalent to \eqref{eqOMAxiom} in the presence of excluded middle. It is easy to verify directly that \eqref{eqOMAxiom} is valid intuitionistically.

To establish the converse, we argue as in \S \ref{SectProofMainProof}, modifying the proof of Theorem \ref{TheoremMain}: let $\mathcal{L}$ be the result of extending fundamental logic by \eqref{eqOMAxiom} and use the Ex-Embedding Theorem to obtain a fundamental embedding 
\[e: L \to O_L \times A_L.\]
By Lemma \ref{LemmaResim}, \eqref{eqOMAxiom} is valid in $O_L$. But $O_L$ is an ortholattice and thus \eqref{eqOMAxiomEM} is valid in $O_L$, so $O_L$ is an orthomodular lattice. Using this observation in place of Claim \ref{ClaimResimGen}, the rest of the proof is as in \S \ref{SectProofMainProof}.
\endproof

\begin{corollary}
The joint validities of orthologic and De Morgan logic are axiomatized by fundamental logic augmented by the axiom  \textup{(Ex)} and weak excluded middle.
\end{corollary}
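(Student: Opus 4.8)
The plan is to follow the template of the proof of Theorem~\ref{TheoremMain} and Corollary~\ref{CorollaryOML}, taking advantage of the fact that weak excluded middle $\top\vdash\lnot p\vee\lnot\lnot p$ is already a joint validity of orthologic and De Morgan logic (so that, unlike in the general recipe of Theorem~\ref{TheoremMain}, we do not need to pass to the weakening $\theta_j\vdash\theta_j\vee\lnot\theta_j$).

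First I would dispatch the ``soundness'' inclusion. By definition, De Morgan logic is the implication-free fragment of intuitionistic logic augmented by weak excluded middle, so $\top\vdash\lnot p\vee\lnot\lnot p$ is valid in De Morgan logic; and being a substitution instance of excluded middle, it is valid in orthologic. Since the axioms of fundamental logic and \textup{(Ex)} are joint validities of orthologic and intuitionistic logic (hence a fortiori of De Morgan logic), fundamental logic $+$ \textup{(Ex)} $+$ weak excluded middle is contained in the intersection of orthologic and De Morgan logic.

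For the converse, I would argue exactly as in \S\ref{SectProofMainProof}. Let $\mathcal{L}$ denote fundamental logic $+$ \textup{(Ex)} $+$ weak excluded middle and suppose $\phi\vdash\psi$ is not $\mathcal{L}$-derivable; pick an $\mathcal{L}$-lattice $L$ and a valuation refuting it. As $L$ is in particular an Ex-lattice, the Ex-Embedding Theorem (Theorem~\ref{mainthm}) gives an ortholattice $O_L$, a Heyting lattice $A_L$, and a fundamental embedding $e\colon L\to O_L\times A_L$; projecting the refuting valuation onto the two coordinates (as in the proof of Theorem~\ref{TheoremOI}) shows $\phi\vdash\psi$ fails in $O_L$ or in $A_L$. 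If it fails in $O_L$, then since $O_L$ is an ortholattice it is not valid in orthologic and we are done.

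The only point genuinely requiring an argument — the analogue, on the intuitionistic side, of Claim~\ref{ClaimResimGen} — is to check that $A_L$ is a model of De Morgan logic, i.e. a Heyting lattice validating weak excluded middle; being a Heyting lattice is immediate from the Ex-Embedding Theorem, and I expect no real obstacle in the remaining step either. For weak excluded middle I would use that $\widehat{\cdot}\colon L\to A_L$ is a \emph{surjective} fundamental homomorphism (Lemma~\ref{emblma}): given a valuation $V$ on $A_L$, lift each $V(p)$ along surjectivity to some $w(p)\in L$, so that $\widehat{w(\chi)}=V(\chi)$ for every formula $\chi$; since weak excluded middle holds in $L$ we have $w(\lnot p\vee\lnot\lnot p)=1$ in $L$, and applying $\widehat{\cdot}$ (which preserves $1$ and commutes with $\vee$ and $\lnot$) yields $\lnot V(p)\vee\lnot\lnot V(p)=1$ in $A_L$. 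Hence $A_L$ validates weak excluded middle, so if $\phi\vdash\psi$ fails in $A_L$ it is not valid in De Morgan logic. Either way $\phi\vdash\psi$ is not a joint validity of orthologic and De Morgan logic, which completes the proof. The reason we may take weak excluded middle itself as the added axiom, rather than its weakening from Theorem~\ref{TheoremMain}, is precisely that it is already valid in orthologic, so $O_L$ handles it automatically and the only transfer that needs to be verified is the routine descent of an inequation along the surjection $\widehat{\cdot}$.
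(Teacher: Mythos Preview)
Your argument is correct, but it takes a longer route than the paper's. The paper obtains the corollary as an \emph{immediate} consequence of Theorem~\ref{TheoremMain}: with $\mathcal{L}_O$ equal to orthologic and $\mathcal{L}_I$ equal to De Morgan logic, the general recipe adds only the translated axiom $\top\vdash(\lnot p\vee\lnot\lnot p)\vee\lnot(\lnot p\vee\lnot\lnot p)$, and then one observes that
\[
(\lnot p\vee\lnot\lnot p)\vee\lnot(\lnot p\vee\lnot\lnot p)=\lnot p\vee\lnot\lnot p
\]
already holds in every fundamental lattice (since $\lnot(\lnot p\vee\lnot\lnot p)=\lnot\lnot p\wedge\lnot\lnot\lnot p=\lnot\lnot p\wedge\lnot p=0$). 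Thus the translated axiom \emph{is} weak excluded middle, and no re-running of the embedding argument is needed.

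Your approach instead mimics the proof of Corollary~\ref{CorollaryOML}, re-entering the Ex-Embedding machinery and transferring weak excluded middle to $A_L$ via the surjectivity of $\widehat{\cdot}$. This is perfectly valid and is essentially how the proof of Theorem~\ref{TheoremMain} itself establishes that $A_L$ is an $\mathcal{L}_I$-lattice (your surjectivity argument is the specialization of Claim~\ref{ClaimHeytingLattice} to the present case). The paper's route is shorter because it leverages Theorem~\ref{TheoremMain} as a black box together with a one-line lattice identity; your route is more self-contained and makes explicit why no weakening of the axiom is required. Your closing explanation (``because it is already valid in orthologic'') is fine for your argument, but note the paper's slicker reason: the weakened form and the original form are \emph{fundamentally equivalent}, so the question of which to use never arises.
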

\proof
Immediate from Theorem \ref{TheoremMain} using the fact that
\[(\lnot p \vee\lnot\lnot p) \vee \lnot (\lnot p \vee\lnot\lnot p) = \lnot p \vee\lnot\lnot p\]
is fundamentally valid, which in turn is easy to verify directly.
\endproof

\subsection{A characterization of super-Ex logics}
In light of \cref{TheoremMain}, we can now associate to a pair $(\mc{L}_O, \mc{L}_I)$ of a quantum logic $\mc{L}_O$ and a superintuitionistic logic $\mc{L}_I$ the super-Ex logic $\mc{L}_O \cap\mc{L}_I$. Similarly, given a super-Ex logic $\mc{L}$, let $\mathbb{V}(\mc{L})$ be the variety of fundamental lattices determined by $\mc{L}$. We define $O_\mc{L} =\{ \phi \vdash \psi \mid \forall L \in \mathbb{V}(\mc{L}): O_L \models \phi \leq \psi \}$ and $I_\mathcal{L} = \{\chi \vdash \theta \mid \forall L \in \mathbb{V}(\mc{L}): A_L \models \chi \leq \theta\}$, where $O_L$ and $A_L$ are obtained from $L$ via the Ex-Embedding Theorem.

\begin{theorem} \label{prodthm}
    The maps: $(\mc{L}_O,\mc{L}_I) \mapsto \mc{L}_O \cap \mc{L}_I$ and $\mc{L} \mapsto (O_\mc{L},I_\mc{L})$ are inverses of each another, and determine an isomorphism between the lattice $\mathbf{SE}$ of super-\textup{Ex} logics and the lattice $\mathbf{SO}\times\mathbf{SI}$, where $\mathbf{SO}$ and $\mathbf{SI}$ are, respectively, the lattices of quantum and superintuitionistic logics (in the signature $\{\land, \lor, \neg\}$).
\end{theorem}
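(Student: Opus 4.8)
The plan is to exhibit the two maps $\Phi\colon \mathbf{SO}\times\mathbf{SI}\to\mathbf{SE}$, $\Phi(\mc L_O,\mc L_I)=\mc L_O\cap\mc L_I$, and $\Psi\colon\mathbf{SE}\to\mathbf{SO}\times\mathbf{SI}$, $\Psi(\mc L)=(O_\mc L,I_\mc L)$, and to verify that they are well defined, order preserving (for the inclusion orderings), and mutually inverse. Since any order isomorphism between lattices automatically preserves all existing meets and joins, this suffices. Well-definedness of $\Phi$ is immediate: $\mc L_O\cap\mc L_I$ extends fundamental logic, validates \textup{(Ex)} (which is valid both in orthologic and in intuitionistic logic), and is closed under the introduction--elimination rules, being an intersection of logics that are. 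For $\Psi$: every $O_L$ is an ortholattice and every $A_L$ a Heyting lattice, so $O_\mc L$ is a logic extending orthologic and $I_\mc L$ a logic extending intuitionistic logic, whence $\Psi(\mc L)\in\mathbf{SO}\times\mathbf{SI}$. Monotonicity is routine: $\Phi$ is monotone because intersection is; and if $\mc L\subseteq\mc L'$ then $\mathbb V(\mc L')\subseteq\mathbb V(\mc L)$, so an inequality valid in every $O_L$ with $L\in\mathbb V(\mc L)$ is in particular valid in every $O_L$ with $L\in\mathbb V(\mc L')$, giving $O_\mc L\subseteq O_{\mc L'}$, and likewise $I_\mc L\subseteq I_{\mc L'}$.

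Next I would prove $\Phi\circ\Psi=\mathrm{id}_{\mathbf{SE}}$, that is, $O_\mc L\cap I_\mc L=\mc L$ for each super-Ex logic $\mc L$. The inclusion $\mc L\subseteq O_\mc L\cap I_\mc L$ holds because, for every $L\in\mathbb V(\mc L)$, both $O_L$ (via the congruence $\resim$ of Lemma~\ref{LemmaResim}) and $A_L$ (via the surjection $a\mapsto\widehat a$ of Lemma~\ref{emblma}) are surjective fundamental homomorphic images of $L$, and surjective fundamental homomorphisms preserve validity of inequalities $\phi\leq\psi$. For the reverse inclusion, if $\phi\vdash\psi\notin\mc L$ then, by algebraic completeness, some $L\in\mathbb V(\mc L)$ carries a valuation $v$ with $v(\phi)\nleq v(\psi)$; applying the Ex-Embedding Theorem and then arguing exactly as in the proof of Theorem~\ref{TheoremOI}, one of the two coordinate projections of $v$ composed with the embedding refutes $\phi\vdash\psi$ in $O_L$ or in $A_L$, so $\phi\vdash\psi\notin O_\mc L$ or $\phi\vdash\psi\notin I_\mc L$.

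For $\Psi\circ\Phi=\mathrm{id}$ I must check $O_{\mc L_O\cap\mc L_I}=\mc L_O$ and $I_{\mc L_O\cap\mc L_I}=\mc L_I$. The efficient route is to identify the classes of algebras involved. On the one hand, reusing the two Claims in the proof of Theorem~\ref{TheoremMain} (Glivenko's theorem for the ortho side, and Glivenko together with Claim~\ref{ClaimHeytingLattice} for the Heyting side), for every $L\in\mathbb V(\mc L_O\cap\mc L_I)$ the quotient $O_L$ is an ortholattice validating $\mc L_O$ and the image $A_L$ is a Heyting lattice validating $\mc L_I$. On the other hand, every ortholattice $M\models\mc L_O$ lies in $\mathbb V(\mc L_O\cap\mc L_I)$ and satisfies $O_M\cong M$ (since $\neg$ is injective on an ortholattice, the congruence $\resim$ is trivial there), and every Heyting lattice $N\models\mc L_I$ lies in $\mathbb V(\mc L_O\cap\mc L_I)$ and satisfies $A_N\cong N$. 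Hence $\{O_L:L\in\mathbb V(\mc L_O\cap\mc L_I)\}$ and $\mathbb V(\mc L_O)$ coincide as classes of ortholattices, so they determine the same logic, which by algebraic completeness of $\mc L_O$ is $\mc L_O$ itself; symmetrically $I_{\mc L_O\cap\mc L_I}=\mc L_I$. Together with monotonicity, the facts collected above show that $\Phi$ and $\Psi$ are mutually inverse order isomorphisms, hence a lattice isomorphism $\mathbf{SE}\cong\mathbf{SO}\times\mathbf{SI}$.

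The step I expect to be the main obstacle is the isomorphism $A_N\cong N$ for a Heyting lattice $N$ (the companion fact $O_M\cong M$ being immediate). It amounts to a Stone/Priestley-type representation: $a\mapsto\widehat a$ is a fundamental homomorphism by Lemma~\ref{emblma}; it is injective because $N$ is distributive, so whenever $a\nleq b$ the filter $\upset a$ and the ideal $\dnset b$ are disjoint and the prime-filter separation theorem yields $P\in P(N)$ with $a\in P$ and $b\notin P$; and its image already forms a Heyting sublattice of $Op(P(N))$, being closed under $\wedge,\vee,0,1,\neg$, so it equals $A_N$. One has to be careful here that $A_N$ acquires no new elements beyond $\{\widehat a:a\in N\}$; once that is confirmed, the remainder of the argument is bookkeeping with algebraic completeness and the claims already established in the proof of Theorem~\ref{TheoremMain}.
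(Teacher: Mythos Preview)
Your proposal is correct and follows essentially the same architecture as the paper's own proof: monotonicity is routine, $\Phi\circ\Psi=\mathrm{id}$ comes from the Ex-Embedding Theorem exactly as in Theorem~\ref{TheoremOI}, and $\Psi\circ\Phi=\mathrm{id}$ rests on the same two ingredients, namely (i) that $O_L\models\mc L_O$ and $A_L\models\mc L_I$ for every $L\in\mathbb V(\mc L_O\cap\mc L_I)$ (the Claims in \S\ref{SectProofMainProof}) and (ii) that $O_M\cong M$ for ortholattices and $A_N\cong N$ for Heyting lattices. The paper packages (i) slightly differently, invoking the translations $\phi\vdash\lnot\lnot\psi$ and $\chi\vdash\theta\vee\lnot\theta$ directly rather than the Claims, but this is the same content.

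Two small remarks on the obstacle you single out. First, your worry that $A_N$ might ``acquire new elements beyond $\{\widehat a:a\in N\}$'' is already dispatched by Lemma~\ref{emblma}, which states that $\widehat{\cdot}:L\to A_L$ is \emph{surjective}; since the image of a fundamental homomorphism is closed under $\wedge,\vee,\lnot,0,1$, the generated sublattice is exactly the image. Second, the genuine content of $A_N\cong N$ is injectivity of $\widehat{\cdot}$ on a Heyting lattice $N$, and your sketch via prime-filter separation in distributive lattices is the correct argument; the paper simply asserts ``$A_L=L$ whenever $L$ is a Heyting lattice'' without spelling this out, so you are being more careful, not less.
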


\proof
Clearly, both maps are order-preserving, so we only need to show that they are inverses of one another. Suppose $\mc{L}$ is a super-\textup{Ex} logic. Since $O_L$ and $A_L$ are both quotient lattices of $L$ for any \textup{Ex}-lattice $L$, it follows that $\mc{L} \sset O_\mc{L} \cap I_\mc{L}$. For the converse direction, suppose that $\phi\vdash \psi$ is not valid in $\mc{L}$. Then there is an \textup{Ex}-lattice $L$ such that $\phi\vdash \psi$ is not valid in $L$. By the Ex-Embedding Theorem, $L$ embeds into $O_L \times A_L$, so either $\phi\vdash \psi$ is not valid in one of $O_L$ or $A_L$. Either way, it follows that $\phi\vdash \psi$ does not belong to $O_\mc{L} \cap I_\mc{L}$.

Similarly, let us show that $O_{\mc{L}_O \cap \mc{L}_I} = \mc{L}_O$ and $I_{\mc{L}_O \cap \mc{L}_I} = \mc{L}_I$. Note first that $\mathbb{V}(\mc{L}_O) \cup \mathbb{V}(\mc{L}_I) \sset \mathbb{V}(\mc{L}_O \cap \mc{L}_I)$. Moreover, since $O_L = L$ whenever $L$ is an ortholattice and $A_L = L$ whenever $L$ is a Heyting lattice, this shows that $O_{\mc{L}_O \cap \mc{L}_I} \sset \mc{L}_O$ and $I_{\mc{L}_O \cap \mc{L}_I} \sset \mc{L}_I$. For the converse directions, suppose that $\phi \vdash \psi$ belongs to $\mc{L}_O$. Then $\phi \vdash \lnot\lnot\psi \in \mc{L}_O \cap \mc{L}_I$ by Theorem \ref{TheoremMain}. Since $\psi$ and $\lnot\lnot \psi$ are equivalent over orthologic, it follows that $\phi\vdash \psi$ is valid in $O_L$ for any $L \in \mathbb{V}(\mc{L}_O \cap \mc{L}_I)$, and therefore $\phi\vdash \psi$ belongs to $O_{\mc{L}_O}$. Similarly, if $\chi\vdash \theta$ belongs to $\mc{L}_I$, then $\chi \vdash \theta \vee \lnot\theta \in \mc{L}_O \cap \mc{L}_I$ by Theorem \ref{TheoremMain}. 
Since $\chi\vdash \theta$ and $\chi\vdash \theta\vee\lnot \theta$ are equivalent over intuitionistic logic, it follows that $\chi\vdash \theta$ is valid in $A_L$ for any $L \in \mathbb{V}(\mc{L}_O \cap \mc{L}_I)$, and therefore $\chi\vdash \theta$ belongs to $I_{\mc{L}_I}$. This completes the proof.
\endproof

\section{Sub-Ex Logics}\label{SectSubExL}
In this section, we study logics weaker than orthologic and intuitionistic logic, but in which \textup{(Ex)} is not valid. Our motivation is the following. Now that we have a clear picture of logics which support both constructive and quantum reasoning, we may wonder ``how far'' Holliday's fundamental logic is from such logics.
We prove:
\begin{theorem} \label{subexthm}
There exist infinitely many logics extending fundamental logic and contained in both orthologic and intuitionistic logic.
\end{theorem}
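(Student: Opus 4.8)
The plan is to obtain the desired logics as extensions of fundamental logic by carefully chosen axioms, each of which is already a theorem of Ex-logic. Concretely, it suffices to construct a sequence $(A_n)_{n\ge 1}$ of finite fundamental lattices, none of which is an Ex-lattice, together with entailments $(\varepsilon_n)_{n\ge 1}$ in the signature $\{\land,\lor,\lnot\}$ such that: (i) $\varepsilon_n$ is valid in every ortholattice and in every Heyting lattice (hence, by \cref{TheoremOI}, in every Ex-lattice); (ii) $\varepsilon_n$ fails in $A_n$; and (iii) $\varepsilon_n$ holds in $A_m$ for every $m\ne n$. Granting this, set $L_S=\text{(fundamental logic)}+\{\varepsilon_n:n\notin S\}$ for each $S\sset\mathbb N$. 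Every Ex-lattice validates all the $\varepsilon_n$ by (i), so each $L_S$ is contained in Ex-logic, hence in $\text{(orthologic)}\cap\text{(intuitionistic logic)}$ by \cref{TheoremIntroEx}, and it visibly extends fundamental logic. Moreover the $L_S$ are pairwise distinct: if $n\in S\setminus S'$ then $\varepsilon_n\in L_{S'}$ since $n\notin S'$, whereas $\varepsilon_n\notin L_S$, because $A_n$ is a fundamental lattice validating every $\varepsilon_m$ with $m\ne n$---in particular every $\varepsilon_m$ with $m\notin S$---hence validating all of $L_S$, while $\varepsilon_n$ fails in $A_n$ by (ii). Thus $S\mapsto L_S$ is injective, yielding not merely infinitely many but continuum-many logics between fundamental logic and $\text{(orthologic)}\cap\text{(intuitionistic logic)}$.

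The whole difficulty is therefore concentrated in constructing the pairs $(A_n,\varepsilon_n)$, and here some care is required. One cannot simply take $\varepsilon_1,\varepsilon_2,\varepsilon_3$ to be \textup{(Nu)}, \textup{(Vi)}, \textup{(Cl)} and iterate: any further non-Ex fundamental lattice must refute one of \textup{(Nu)}, \textup{(Vi)}, \textup{(Cl)}, and then it cannot serve as an $A_m$ in (iii) for the corresponding index, so the construction jams at length three. The $\varepsilon_n$ must instead be genuinely weaker, ``depth-$n$'' consequences of \textup{(Ex)}---entailments whose failure can be witnessed only by fundamental lattices of complexity growing with $n$, and arranged so that a single such lattice has a single defect hitting exactly one $\varepsilon_n$. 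I would build the $A_n$ by generalizing the three counterexamples used above for the non-redundancy of \textup{(Nu)}, \textup{(Vi)}, \textup{(Cl)}: starting from a fixed small ortho- or Heyting lattice, one enlarges its ``non-$\lnot\lnot$-closed part'' (or its non-distributive part) in a controlled, $n$-indexed way, and reads off from $A_n$ the entailment $\varepsilon_n$ that it refutes.

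Three verifications then have to be carried out uniformly in $n$: that each $A_n$ really is a fundamental lattice---that the unary operation put on it is antitone, satisfies $a\land\lnot a=0$, and satisfies $a\le\lnot\lnot a$; that $\varepsilon_n$ is valid both intuitionistically and orthologically (for instance by direct arguments of the kind used in \S\ref{SectExL}); and that the separation (ii)--(iii) holds, i.e. that the defect of $A_n$ is invisible both to the smaller members $A_m$ (automatic once cardinalities grow) and to the larger ones. I expect the main obstacle to be precisely this last point together with the negation bookkeeping: designing the family so that $\varepsilon_n$ is refuted by $A_n$ alone, and equipping each $A_n$ with a compatible weak pseudo-complement---exactly the subtlety that already made the three small counterexamples above delicate.
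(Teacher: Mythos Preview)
Your reduction in the first paragraph is correct: if you can produce pairs $(A_n,\varepsilon_n)$ with properties (i)--(iii), then the map $S\mapsto L_S$ is injective and you get continuum many intermediate logics, which is even stronger than what the theorem asks. The gap is that you never construct these pairs. Everything from ``I would build the $A_n$\ldots'' onward is a description of what a proof would have to do, together with an honest admission that the separation condition (iii) and the negation bookkeeping are the hard part. That is exactly right, and it is precisely the part that is missing. As you yourself note, naive attempts (taking $\varepsilon_1,\varepsilon_2,\varepsilon_3$ to be \textup{(Nu)}, \textup{(Vi)}, \textup{(Cl)}) jam after three steps, and nothing in the proposal explains how to get past that obstacle for a single further $n$, let alone uniformly in $n$.

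The paper's proof sidesteps this difficulty entirely by importing a ready-made infinite family of separating axioms from quantum logic. Dunn et al.\ provide formulas $\phi_m$ such that $\phi_m\le 0$ holds in the ortholattice $Q(\mathbb{C}^{2^l})$ iff $l\le m$, and $\phi_m\le 0$ holds in every distributive lattice. The paper then takes $\varepsilon_m$ to be $\top\vdash \phi_m\vee\lnot\phi_m$---this is an orthologic validity by excluded middle and an intuitionistic validity because $\phi_m\le 0$ distributively---and builds the witnessing lattices $F_l$ by adjoining a new top element above $Q(\mathbb{C}^{2^l})$, checking directly that $\varepsilon_m$ holds in $F_l$ iff $l\le m$. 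This gives an infinite chain of logics rather than an antichain, so the paper only gets countably many logics, not continuum many; but the construction is concrete and complete, whereas yours is only a programme.
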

\proof
Dunn et al.~\cite{Detal05} have shown that the quantum logics associated to $2^n$-dimensional Hilbert spaces all differ. Let us recall their result: there exist formulas $\phi_m$ for $m\in\mathbb{N}$ such that $\phi_m$ has value $0$ in every distributive lattice.
%
%
Moreover, the inequality
\begin{equation}\label{eqAlphams}
    \phi_m \leq 0
\end{equation}
is valid in the ortholattice $Q(\mathbb{C}^{2^l})$ of closed subspaces of the Hilbert space $\mathbb{C}^{2^l}$ if and only if $l\leq m$. Consider the logic $\mathcal{F}_m$ obtained by extending fundamental logic by the axiom
\begin{equation}\label{eqAlphamInt}
    \vdash \phi_m \vee \lnot \phi_m.
\end{equation}
Then, $\mathcal{F}_m$ is a sub-Ex logic: that $\mathcal{F}_m$ is contained in intuitionistic logic follows from the fact that \eqref{eqAlphams} is a weakening of distributivity. That $\mathcal{F}_m$ is contained in orthologic follows from the fact that it is implied by excluded middle.

Given a natural number $l$, let $F_l$ be the lattice obtained by adding a new element on top of the lattice $Q(\mathbb{C}^{2^l})$. For convenience, we will denote as $1$ the top element of $F_l$ and as $1^*$ the top element of $Q(\mathbb{C}^{2^l})$, which is now the second largest element of $F_l$. Clearly, in order to turn $F_l$ into a fundamental lattice, it is enough to map $1$ to $0$ and to keep the orthocomplement in $Q(\mathbb{C}^{2^l})$, except for $\neg 0$, which is now $1$ instead of $1^*$. Note that there is a surjective homomorphism 
$h: F_l \to Q(\mathbb{C}^{2^l})$, which maps $1$ to $1^*$ and every element of $Q(\mathbb{C}^{2^l})$ to itself.

\begin{claim} \label{subexclaim1}
For any formula $\phi$, if $\phi$ is valid on $Q(\mathbb{C}^{2^l})$, then $\neg \neg \phi$ is valid on $F_l$.
\end{claim}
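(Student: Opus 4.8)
The plan is to exploit the surjective fundamental homomorphism $h \colon F_l \to Q(\mathbb{C}^{2^l})$ already exhibited, together with a short direct computation of the fundamental negation near the top of $F_l$. Recall that ``$\phi$ valid on a fundamental lattice'' means that $\phi$ takes the top value under every valuation; in $F_l$ the top element is $1$, while in $Q(\mathbb{C}^{2^l})$ it is $1^*$.

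First I would fix an arbitrary valuation $w$ into $F_l$ and consider the valuation $h \circ w$ into $Q(\mathbb{C}^{2^l})$ (defined on propositional variables by $p \mapsto h(w(p))$). Since $h$ commutes with $\wedge, \vee, \neg, 0, 1$ — that is, it is a fundamental homomorphism — a routine induction on the structure of formulas gives $h(w(\psi)) = (h \circ w)(\psi)$ for every formula $\psi$. Applying the hypothesis that $\phi$ is valid on $Q(\mathbb{C}^{2^l})$, we obtain $h(w(\phi)) = (h\circ w)(\phi) = 1^*$. Since $h$ maps $1 \mapsto 1^*$ and $1^* \mapsto 1^*$ and is injective on the remaining elements, $h^{-1}(\{1^*\}) = \{1^*, 1\}$, and hence $w(\phi) \in \{1^*, 1\}$.

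It then remains to compute $\neg\neg$ on these two elements of $F_l$. By construction $\neg 0 = 1$ in $F_l$, while $\neg 1^* = (1^*)^{\perp} = 0$ (the orthocomplement in $Q(\mathbb{C}^{2^l})$ of its top element) and $\neg 1 = 0$; therefore $\neg\neg 1^* = \neg 0 = 1$ and $\neg\neg 1 = \neg 0 = 1$. So in either case $w(\neg\neg\phi) = \neg\neg w(\phi) = 1$, and since $w$ was arbitrary, $\neg\neg\phi$ is valid on $F_l$. There is no genuine obstacle in this argument; the only points requiring care are checking that $h$ really does commute with the fundamental negation — which need only be verified on the three special elements $0$, $1^*$, $1$, since on the rest of $Q(\mathbb{C}^{2^l})$ the negations of $F_l$ and $Q(\mathbb{C}^{2^l})$ coincide — and keeping track of the bookkeeping that $\neg\neg$ sends both of the two largest elements of $F_l$ to $1$.
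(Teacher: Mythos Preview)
Your proposal is correct and follows essentially the same argument as the paper: push a valuation on $F_l$ through the surjective homomorphism $h$ to obtain a valuation on $Q(\mathbb{C}^{2^l})$, use validity there to force $w(\phi)\in\{1^*,1\}$, and then observe that $\neg\neg$ of either top element is $1$. The only difference is that you spell out slightly more detail (the preimage $h^{-1}(\{1^*\})$ and the verification that $h$ commutes with $\neg$ on the special elements), but the underlying idea is identical.
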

\proof
Note first that for any valuation $V$ on $F_l$, there is a valuation $V'$ on $Q(\mathbb{C}^{2^l})$ such that $V'(\phi) = h(V(\phi))$ for any formula $\phi$. Indeed, given a valuation $V$ on $F_l$, we can simply define $V'(p)$ as $h(V(p))$ for any propositional variable $p$. Now suppose that $\phi$ is valid on $Q(\mathbb{C}^{2^l})$. Then for any valuation $V$ on $F_l$, $h(V(\phi))=V'(\phi) = 1^*$. But this means that $V(\phi) \in \{1^*,1\}$. Either way, it follows that $V(\neg\phi) = 0$, and hence $V(\neg\neg\phi) = 1$.
\endproof

\begin{claim} \label{subexclaim2}
    For any valuation $V$ on $Q(\mathbb{C}^{2^l})$, there is a valuation $V'$ on $F_l$ such that $h(V'(\phi)) = V(\phi)$ for any formula $\phi$.
\end{claim}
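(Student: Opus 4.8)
The plan is to construct the valuation $V'$ on $F_l$ directly from $V$, in the most naive way possible: set $V'(p) = V(p)$ for every propositional variable $p$, viewing each element of $Q(\mathbb{C}^{2^l})$ as an element of $F_l$ in the obvious way. The claim is then that this choice propagates up through all the connectives, i.e. $h(V'(\phi)) = V(\phi)$ for every formula $\phi$, which we prove by induction on the structure of $\phi$. Since $h$ is a surjective fundamental homomorphism by construction, it commutes with $\wedge$, $\vee$, and $\neg$ and preserves $0$ and $1$, so the inductive steps for $\wedge$ and $\vee$ are immediate: $h(V'(\phi \wedge \psi)) = h(V'(\phi)) \wedge h(V'(\psi)) = V(\phi) \wedge V(\psi) = V(\phi \wedge \psi)$, and similarly for $\vee$. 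The constant cases $\top$ and $\bot$ are handled by $h(1) = 1^*$ (the top of $Q(\mathbb{C}^{2^l})$) and $h(0) = 0$.

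The one step requiring a moment's attention is negation, because the fundamental negation on $F_l$ does \emph{not} agree with the orthocomplement on $Q(\mathbb{C}^{2^l})$ at the single point $0$: we have $\neg_{F_l} 0 = 1$ whereas $\neg_{Q} 0 = 1^*$. However, this discrepancy is exactly what $h$ collapses: $h(\neg_{F_l} 0) = h(1) = 1^* = \neg_Q 0 = \neg_Q h(0)$, so $h$ still commutes with negation even at $0$; for every element $x \in Q(\mathbb{C}^{2^l})$ other than $0$ the negations literally coincide, and $h$ fixes them, so again $h(\neg_{F_l} x) = \neg_Q x = \neg_Q h(x)$. In other words, $h$ is a genuine fundamental homomorphism precisely because it identifies $1$ with $1^*$, and this is the only subtlety. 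So the inductive step for negation reads $h(V'(\neg\phi)) = h(\neg_{F_l} V'(\phi)) = \neg_Q h(V'(\phi)) = \neg_Q V(\phi) = V(\neg\phi)$.

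I do not anticipate a genuine obstacle here: the content of the claim is entirely absorbed by the fact, already established when $F_l$ and $h$ were introduced, that $h$ is a surjective fundamental homomorphism fixing every element of $Q(\mathbb{C}^{2^l})$. The proof is a routine homomorphism-induction argument, and the only thing worth spelling out for the reader is why the modification of the negation at $0$ does not break the commutation of $h$ with $\neg$. Thus the write-up will be short: define $V'(p) = V(p)$, invoke that $h$ is a fundamental homomorphism, and run the straightforward induction on formula complexity, pausing only to note the $\neg 0$ point. This completes the construction dual to Claim \ref{subexclaim1} and will then be combined with the dimension-sensitivity of \eqref{eqAlphams} to separate the logics $\mathcal{F}_m$.
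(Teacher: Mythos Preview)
Your proposal is correct and matches the paper's approach exactly: define $V'(p) = V(p)$ on propositional variables and run the induction on formula complexity using that $h$ is a fundamental homomorphism. The paper's own proof is a one-liner that simply asserts the induction goes through, so your write-up is, if anything, more detailed than what appears in the paper; the discussion of the $\neg 0$ subtlety is a welcome clarification.
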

\proof
    Given a valuation $V$ on $Q(\mathbb{C}^{2^l})$, we simply let $V'(p) = V(p)$ for any propositional letter $p$. Then a simple induction on the complexity of $\phi$ establishes the claim.
\endproof

\begin{claim}
The axiom \eqref{eqAlphamInt} is valid on $F_l$ if and only if $l \leq m$.
\end{claim}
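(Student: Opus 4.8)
The plan is to reduce the claim to the known property \eqref{eqAlphams} of the Dunn--Hagge--Moss--Wang formulas via the surjective fundamental homomorphism $h\colon F_l\to Q(\mathbb{C}^{2^l})$ and the two transfer claims already established. The first step is to identify, for $x\in F_l$, when $x\vee\neg x$ equals the top $1$ of $F_l$: since $\neg 1=0$, and since for a proper nonzero subspace $x\in Q(\mathbb{C}^{2^l})$ one has $\neg x=x^\perp$ and $x\vee x^\perp=1^*\neq 1$ (and likewise $1^*\vee\neg 1^*=1^*\neq 1$), a four‑case check gives $x\vee\neg x=1$ if and only if $x\in\{0,1\}$. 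Consequently, the axiom \eqref{eqAlphamInt}, i.e.\ $\vdash\phi_m\vee\neg\phi_m$, holds on $F_l$ precisely when every valuation $V$ on $F_l$ sends $\phi_m$ to $0$ or to $1$.

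For the direction $l\le m$: then $\phi_m\le 0$ holds in $Q(\mathbb{C}^{2^l})$ by \eqref{eqAlphams}, so $\vdash\neg\phi_m$ is valid there. I would apply Claim \ref{subexclaim1} to the formula $\neg\phi_m$, obtaining that $\neg\neg\neg\phi_m$, and hence $\neg\phi_m$ (since $\neg\neg\neg a=\neg a$ in every fundamental lattice), is valid on $F_l$; that is, $\neg_{F_l}V(\phi_m)=1$ for every valuation $V$ on $F_l$. As $1=\neg 0$ is the only element of $F_l$ whose negation is $1$, this forces $V(\phi_m)=0$, whence $V(\phi_m\vee\neg\phi_m)=0\vee 1=1$; so \eqref{eqAlphamInt} is valid on $F_l$. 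For the direction $l>m$: then \eqref{eqAlphams} fails in $Q(\mathbb{C}^{2^l})$, so there is a valuation $W$ with $W(\phi_m)\neq 0$. By Claim \ref{subexclaim2} there is a valuation $W'$ on $F_l$ with $h(W'(\psi))=W(\psi)$ for every formula $\psi$ — concretely $W'(p)=W(p)\in Q(\mathbb{C}^{2^l})$ for every variable $p$. The key point is that $W'$ does not escape the sublattice $Q(\mathbb{C}^{2^l})$: the operations $\wedge,\vee$ of $F_l$ restrict to those of $Q(\mathbb{C}^{2^l})$, and $\neg_{F_l}$ sends $Q(\mathbb{C}^{2^l})\setminus\{0\}$ into $Q(\mathbb{C}^{2^l})$, so the only way a subformula could take value $1$ is by applying $\neg_{F_l}$ to $0$. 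Since (in the Dunn--Hagge--Moss--Wang formulas) negations are applied only to variables and one may take the witness $W$ with all variables receiving nonzero subspaces, an induction on subformulas then gives $W'(\psi)=W(\psi)$ for every subformula of $\phi_m$; in particular $W'(\phi_m)=W(\phi_m)$ is a nonzero element of $Q(\mathbb{C}^{2^l})$, so $W'(\phi_m\vee\neg\phi_m)=W(\phi_m)\vee W(\phi_m)^\perp=1^*\neq 1$, showing \eqref{eqAlphamInt} fails on $F_l$.

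The main obstacle is precisely the italicized step above: that the witness $W$ for the failure of \eqref{eqAlphams} can be chosen so that the induced valuation on $F_l$ does not collapse a negated subformula to $0$ and thereby jump to the new top $1$ (which would make $\phi_m\vee\neg\phi_m$ evaluate to $1$ and give no counterexample). This is immediate from the explicit shape of the $\phi_m$, which encode failures of distributivity by configurations of proper nonzero subspaces and have negation applied only to variables; if one prefers to treat $\phi_m$ as a black box it suffices to know that $\phi_m$ takes a proper nonzero value somewhere on $Q(\mathbb{C}^{2^l})$, since then $h(W'(\phi_m))=W(\phi_m)\notin\{0,1^*\}$ already forces $W'(\phi_m)=W(\phi_m)$, and this in turn follows from a genericity argument (the set of witnessing valuations is a nonempty relatively open subset of the relevant product of Grassmannians and is not contained in any locus where a negated subformula vanishes). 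Everything else is routine bookkeeping with $h$, Claim \ref{subexclaim1}, and Claim \ref{subexclaim2}.
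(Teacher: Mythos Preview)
Your forward direction ($l\le m$) matches the paper's proof exactly: apply Claim~\ref{subexclaim1} to $\neg\phi_m$ and use $\neg\neg\neg=\neg$.

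For the reverse direction ($l>m$), your overall strategy is also the paper's, but you make the argument harder than necessary and leave a gap where the paper is direct. The paper does not merely use $W(\phi_m)\neq 0$; it invokes the Dunn--Hagge--Moss--Wang result in the stronger form $0<V(\phi_m)<1^*$, i.e., a \emph{strictly proper} nonzero value. Since $h$ is a bijection off $h^{-1}(1^*)=\{1^*,1\}$, this immediately gives $V'(\phi_m)=V(\phi_m)<1^*$ and, because $0<V(\phi_m)$ forces $V(\neg\phi_m)<1^*$ as well, also $V'(\neg\phi_m)=V(\neg\phi_m)<1^*$; hence $V'(\phi_m\vee\neg\phi_m)\le 1^*<1$. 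No analysis of how $W'$ might ``escape'' through $\neg_{F_l}0=1$ is needed.

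Your two proposed workarounds are both problematic as written. The structural claim that ``negations are applied only to variables'' in the $\phi_m$ is not substantiated (and is in fact false for the explicit formulas in Dunn et al., which negate compound terms), so the induction you sketch does not go through on that basis. The alternative ``genericity argument on Grassmannians'' is gestured at but not carried out, and in any case it is aimed at reproving exactly the inequality $V(\phi_m)<1^*$ that the paper simply cites. Once you use the full strength of the cited result, your entire second paragraph of obstacles disappears.
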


\proof
Suppose that $l \leq m$. Then $\neg \alphad m$ is valid on $Q(\mathbb{C}^{2^l})$. By \cref{subexclaim1}, this means that $\neg \alphad m$ is also valid on $F_l$. For the converse direction, suppose that $m < l$. Then, by the result of Dunn et al.~\cite{Detal05}, there is a valuation $V$ on $Q(\mathbb{C}^{2^l})$ such that $0 < V(\alphad m) < 1^*$. By \cref{subexclaim2}, there is a valuation $V'$ on $F_l$ such that 
\[h(V'(\alphad m)) = V(\alphad m)\]
and 
\[h(V'(\neg \alphad m)) = V(\neg \alphad m).\]
Since $V(\alphad m) \neq 1^*$ and $V(\neg\alphad m) \neq 1^*$, it follows that \[V'(\alphad m) < 1^*\] and \[V'(\neg\alphad m) < 1^*.\] But this means that \[V'(\alphad m \lor \neg \alphad m) \leq 1^*,\] which shows that \eqref{eqAlphamInt} is not valid on $F_l$.
\endproof

It follows that the set $\{\mathcal{F}_m: m\in\mathbb{N}\}$ is an infinite family of logics extending fundamental logic and contained in both intuitionistic logic and orthologic, as desired. This completes the proof of the theorem.
\endproof

The proof just given in fact shows the following stronger fact:

\begin{corollary}
    There are infinitely many logics in the interval between $\textup{FL} + \textup{(Vi)} + \textup{(Nu)}$ and $\textup{FL} + \textup{(Ex)}$.
\end{corollary}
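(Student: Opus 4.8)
The plan is to refine the family of logics constructed in the proof of \cref{subexthm}. For each $m$ set
\[\mathcal{G}_m \;=\; \textup{FL}+\textup{(Vi)}+\textup{(Nu)}+(\vdash\phi_m\vee\lnot\phi_m),\]
where $\phi_m$ is as in that proof. Trivially $\textup{FL}+\textup{(Vi)}+\textup{(Nu)}\sset\mathcal{G}_m$, so the corollary will follow once I check that (i) $\mathcal{G}_m\sset\textup{FL}+\textup{(Ex)}$ for every $m$, and (ii) infinitely many of the $\mathcal{G}_m$ are pairwise distinct.

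For (i) it suffices that each axiom of $\mathcal{G}_m$ is a theorem of $\textup{FL}+\textup{(Ex)}$. The axioms \textup{(Vi)} and \textup{(Nu)} are consequences of \textup{(Ex)} over fundamental logic by \cref{LemmaVi} and \cref{LemmaNu}. The axiom $\vdash\phi_m\vee\lnot\phi_m$ is valid in every ortholattice (being an instance of excluded middle) and in every Heyting lattice (since $\phi_m\leq 0$ is a weakening of distributivity), as already observed in the proof of \cref{subexthm}; hence it belongs to the intersection of orthologic and intuitionistic logic, which by \cref{TheoremOI} is exactly $\textup{FL}+\textup{(Ex)}$. Thus $\mathcal{G}_m\sset\textup{FL}+\textup{(Ex)}$.

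For (ii) I would revisit the fundamental lattices $F_l$ from the proof of \cref{subexthm} and verify that each of them validates \textup{(Vi)} and \textup{(Nu)}. This is the one ingredient not already supplied by that proof, and it is the only genuine (if mild) obstacle; but it reduces to a short case check using the fact that in $F_l$ the operation $\lnot\lnot$ is the identity on every element other than $1^*$, which it sends to the top element $1$. Indeed, in any instance of \textup{(Nu)}, if neither $p$ nor $q$ takes the value $1^*$ the inequality reduces to $p\wedge q\leq\lnot\lnot(p\wedge q)$, and if (say) $p=1^*$ both sides equal $\lnot\lnot q$; and in any instance of \textup{(Vi)}, if $f\neq 1^*$ then $\lnot\lnot f=f$ puts the left-hand side below $f$ and hence below the right-hand side, while if $f=1^*$ the left-hand side $a\wedge(c\vee e)$ either lies below $1^*$ or equals $1$, the latter forcing $a=1$ and one of $c,e$ to equal $1$, so that $(a\wedge c)\vee(a\wedge e)=1$. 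Consequently $F_l$ validates $\mathcal{G}_m$ if and only if it validates $\vdash\phi_m\vee\lnot\phi_m$, which, by the claim established in the proof of \cref{subexthm}, holds precisely when $l\leq m$. Therefore, for $m<m'$ the lattice $F_{m'}$ validates $\mathcal{G}_{m'}$ but not $\mathcal{G}_m$, so $\mathcal{G}_m\neq\mathcal{G}_{m'}$, and $\{\mathcal{G}_m:m\in\mathbb{N}\}$ is an infinite family of logics in the interval between $\textup{FL}+\textup{(Vi)}+\textup{(Nu)}$ and $\textup{FL}+\textup{(Ex)}$, as desired. Everything beyond the verification that the $F_l$ validate \textup{(Vi)} and \textup{(Nu)} is immediate from \cref{LemmaVi}, \cref{LemmaNu}, \cref{TheoremOI}, and the proof of \cref{subexthm}.
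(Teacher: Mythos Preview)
Your proposal is correct and follows exactly the paper's approach: the paper's proof simply asserts that ``a closer inspection'' shows each $F_l$ satisfies \textup{(Vi)} and \textup{(Nu)}, and then considers the logics $\mathcal{F}_m+\textup{(Vi)}+\textup{(Nu)}$, which are precisely your $\mathcal{G}_m$. Your proof is in fact more complete than the paper's, since you actually carry out the case analysis (using that $\lnot\lnot$ is the identity off $1^*$ and that $c\vee e=1$ in $F_l$ forces $c=1$ or $e=1$) that the paper leaves implicit.
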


\begin{proof}
A closer inspection of the proof of \cref{subexthm} reveals that the fundamental lattice $F_l$ satisfies axioms $\textup{(Vi)}$ and $\textup{(Nu)}$. Consequently, the same proof shows that the logics $\{\mathcal{F}_m + \textup{(Vi)} + \textup{(Nu)} \mid m \in \mathbb{N}\}$ form an infinite family of logics between $\textup{FL} + \textup{(Vi)} + \textup{(Nu)}$ and $\textup{FL} + \textup{(Ex)}$.
\end{proof}

\begin{figure}[h]
\begin{center}
\[\begin{tikzcd}
	& {\textup{CL}} \\
	\\
	\\
	{\textup{IL}} && {\textup{OL}} \\
	\\
	\\
	& {\textup{Ex}} \\
	\\
	\\
	{\textup{FL + (Nu) + (Vi)}} & {\textup{FL + (Vi)+(Cl)}} & {\textup{FL + (Nu)+(Cl)}} \\
	\\
	\\
	{\textup{FL + (Vi)}} & {\textup{FL + (Nu)}} & {\textup{FL + (Cl)}} \\
	\\
	\\
	& {\textup{FL}}
	\arrow[""{name=0, anchor=center, inner sep=0}, Rightarrow, from=4-1, to=1-2]
	\arrow[""{name=1, anchor=center, inner sep=0}, Rightarrow, from=4-3, to=1-2]
	\arrow[""{name=2, anchor=center, inner sep=0}, Rightarrow, from=7-2, to=4-1]
	\arrow[""{name=3, anchor=center, inner sep=0}, Rightarrow, from=7-2, to=4-3]
	\arrow[Rightarrow, from=10-1, to=7-2]
	\arrow[from=10-2, to=7-2]
	\arrow[from=10-3, to=7-2]
	\arrow[from=13-1, to=10-1]
	\arrow[from=13-1, to=10-2]
	\arrow[from=13-2, to=10-1]
	\arrow[from=13-2, to=10-3]
	\arrow[from=13-3, to=10-2]
	\arrow[from=13-3, to=10-3]
	\arrow[from=16-2, to=13-1]
	\arrow[from=16-2, to=13-2]
	\arrow[from=16-2, to=13-3]
	\arrow[shorten <=27pt, shorten >=27pt, equals, from=2, to=1]
	\arrow[shorten <=27pt, shorten >=27pt, equals, from=3, to=0]
\end{tikzcd}\]
\end{center}
\caption{The structure of constructive quantum logics.}\label{FigurePictureEnd}
\end{figure}
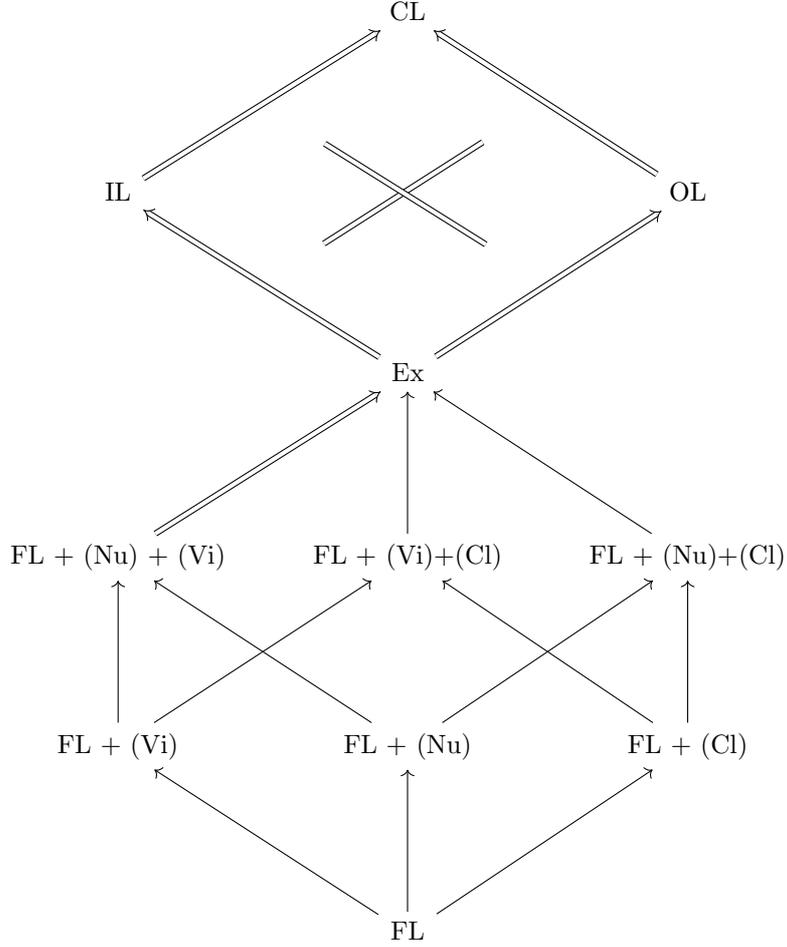

\section{Conclusions}

The work presented here gives a sufficiently clear picture of the structure of constructive quantum logics, which is summarized in Figure \ref{FigurePictureEnd}. In it, all introduction--elimination logics are depicted and Ex-logic represents a dividing line between the logics which support minimal quantum and constructive reasoning from those which do not. 

The crossed pair of double lines in the upper diamond of Figure  \ref{FigurePictureEnd} indicates the isomorphism given by Theorem \ref{prodthm}. Both the upper and lower diamonds contain infinitely many logics, and we use double arrows to indicate that it is known that the interval between two logics contains infinitely many logics. We conjecture that every arrow in the picture represents in fact infinitely many logics. We expect that there is much more to be said about constructive quantum logics, both in terms of theory and applications, and mention the following three questions as starting point:

\begin{question}
What are the joint validities of \textup{FL + (Nu)}, \textup{FL + (Vi)}, and \textup{FL + (Cl)}? What about the  joint validities of each pair of logics?
\end{question}

\begin{question}
What is the three-variable fragment of the intersection of intuitionistic logic and orthologic (in the signature $\{\wedge,\vee,\lnot\}$)?
\end{question}

\begin{question}
Is the validity problem for Ex-logic $\mathsf{co{-}NP}$-hard?
\end{question}

\subsection{Acknowledgements}
The authors would like to thank Rodrigo Almeida, Nick Bezhanishvili, Jan Byd\u{z}ovsk\'y, Mart\'in Soto, and Wesley Holliday for fruitful conversations. The work of the first author was partially supported by FWF grant STA-139.

\end{document}